\def\CC{{\mathbb C}}
\def\KK{{\mathbb K}}
\def\QQ{{\mathbb Q}}
\def\PP{{\mathbb P}}
\def\QQ{{\mathbb Q}}
\def\RR{{\mathbb R}}
\def\TT{{\mathbb T}}
\def\Asf{{\mathsf A}}
\def\Bsf{{\mathsf B}}
\def\Psf{{\mathsf P}}
\def\0{{\mathbf 0}}
\def\1{{\mathbf 1}}
\def\Lcal{{\mathcal L}}
\def\Ocal{{\mathcal O}}
\def\Kbar{{\bar K}}
\def\div{\mathrm{div}}
\def\Gal{\mathrm{Gal}}
\def\Spec{\mathrm{Spec}}
\def\PrePer{\mathrm{PrePer}}
\def\Res{\mathrm{Res}}
\def\supp{\mathrm{supp}}
\def\st{\mathrm{st}}
\theoremstyle{plain}
\newtheorem{thm}{Theorem}
\newtheorem{cor}[thm]{Corollary}
\newtheorem{prop}[thm]{Proposition}
\newtheorem{lem}[thm]{Lemma}
\theoremstyle{remark}
\newtheorem{ex}{Example}
\begin{document}

\title[A Dynamical Pairing Between Two Rational Maps]{A Dynamical Pairing Between Two Rational Maps}
\author{Clayton Petsche, Lucien Szpiro, and Thomas J. Tucker}

\address{Clayton Petsche; Department of Mathematics and Statistics; Hunter College; 695 Park Avenue; New York, NY 10065 U.S.A.}
\email{cpetsche@hunter.cuny.edu}

\address{Lucien Szpiro; Ph.D. Program in Mathematics; CUNY Graduate Center; 365 Fifth Avenue; New York, NY 10016-4309 U.S.A.}
\email{lszpiro@gc.cuny.edu}

\address{Thomas J. Tucker; Department of Mathematics; Hylan Building; University of Rochester; Rochester, NY 14627 U.S.A.}
\email{ttucker@math.rochester.edu}

\date{November 8, 2009}

\thanks{The first author is partially supported by NSF Grant DMS-0901147, the second author by NSF Grants DMS-0739346 and DMS-0854746, and the third author by NSF Grants DMS-0801072 and DMS-0854839.}

\keywords{Arithmetic dynamical systems, canonical heights, equidistribution of small points}

\subjclass[2000]{11G50, 14G40, 37P15}

\begin{abstract}
Given two rational maps $\varphi$ and $\psi$ on $\PP^1$ of degree at least two, we study a symmetric, nonnegative-real-valued pairing $\langle\varphi,\psi\rangle$ which is closely related to the canonical height functions $h_\varphi$ and $h_\psi$ associated to these maps.  Our main results show a strong connection between the value of $\langle\varphi,\psi\rangle$ and the canonical heights of points which are small with respect to at least one of the two maps $\varphi$ and $\psi$.  Several necessary and sufficient conditions are given for the vanishing of $\langle\varphi,\psi\rangle$.  We give an explicit upper bound on the difference between the canonical height $h_\psi$ and the standard height $h_\st$ in terms of $\langle\sigma,\psi\rangle$, where $\sigma(x)=x^2$ denotes the squaring map.  The pairing $\langle\sigma,\psi\rangle$ is computed or approximated for several families of rational maps $\psi$.
\end{abstract}

\maketitle


\section{Introduction}\label{Introduction}

\subsection{Main results}  Let $K$ be a number field with algebraic closure $\Kbar$.  Given a rational map $\varphi:\PP^1\to\PP^1$ of degree at least two defined over $K$, and an integer $n\geq1$, denote by $\varphi^n=\varphi\circ\dots\circ\varphi$ the $n$-th iterate of $\varphi$.  In the study of the dynamical system defined by the action of the family $\{\varphi^n\}_{n=1}^{\infty}$ of all iterates of $\varphi$ on $\PP^1(\Kbar)$, a fundamental tool is the Call-Silverman canonical height function $h_\varphi:\PP^1(\Kbar)\to\RR$ associated to $\varphi$.  A basic property of this function is that $h_\varphi(x)\geq0$ for all $x\in\PP^1(\Kbar)$, with $h_\varphi(x)=0$ if and only if $x$ is preperiodic with respect to $\varphi$.  (A point $x\in\PP^1(\Kbar)$ is said to be preperiodic with respect to $\varphi$ if its forward orbit $\{\varphi^n(x)\mid n\geq1\}$ is a finite set.)

Now consider two rational maps $\varphi:\PP^1\to\PP^1$ and $\psi:\PP^1\to\PP^1$ defined over $K$, each of degree at least two.  In this paper we will study a symmetric, nonnegative-real-valued pairing $\langle\varphi,\psi\rangle$ which is closely related to the canonical height functions $h_\varphi$ and $h_\psi$.  We call $\langle\varphi,\psi\rangle$ the Arakelov-Zhang pairing, for reasons which we will explain below.  Our first result characterizes the pairing $\langle\varphi,\psi\rangle$, and illustrates its close relationship with points of small canonical height with respect to at least one of the two maps $\varphi$ and $\psi$.

\begin{thm}\label{IntroMainThm}
Let $\{x_n\}$ be a sequence of distinct points in $\PP^1(\Kbar)$ such that $h_\psi(x_n)\to0$.  Then $h_\varphi(x_n)\to\langle\varphi,\psi\rangle$.
\end{thm}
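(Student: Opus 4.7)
The plan is to combine the arithmetic equidistribution theorem for small points with an integral representation of the canonical height $h_\varphi$ in terms of $\varphi$-adapted local Green's functions. Since $h_\psi(x_n)\to 0$ and the $x_n$ are distinct, the Galois-orbit sizes $[K(x_n):K]$ necessarily tend to infinity, so the equidistribution theorem of Baker--Rumely, Chambert-Loir, and Favre--Rivera-Letelier applies: at each place $v$ of $K$, the Galois-averaged probability measures $\mu_{x_n,v}$ on the Berkhovich line $\PP^{1,\Berk}_v$ converge weakly to the $\psi$-canonical measure $\mu_{\psi,v}$.

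First, at each place $v$ fix a $\varphi$-canonical local Green's function $g_{\varphi,v}$ on $\PP^{1,\Berk}_v$, normalized so that $h_\varphi(x) = \sum_v \int_{\PP^{1,\Berk}_v} g_{\varphi,v}\, d\mu_{x,v}$ for every $x\in\PP^1(\Kbar)$ (and analogously $g_{\psi,v}$ for $\psi$). These Green's functions agree at all but finitely many places, and the Arakelov--Zhang pairing should be identifiable, directly from its definition via adelic intersection theory, with
\[
\langle\varphi,\psi\rangle \;=\; \sum_v \int_{\PP^{1,\Berk}_v} g_{\varphi,v}\, d\mu_{\psi,v},
\]
a finite sum since $g_{\varphi,v} = g_{\psi,v}$ whenever $\varphi$ and $\psi$ have the same good reduction at $v$.

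Second, I would apply equidistribution place by place: for each of the finitely many contributing $v$, one needs
\[
\int g_{\varphi,v}\, d\mu_{x_n,v} \;\longrightarrow\; \int g_{\varphi,v}\, d\mu_{\psi,v}.
\]
The function $g_{\varphi,v}$ is continuous on $\PP^{1,\Berk}_v$ except for a logarithmic polar singularity along a divisor that carries no $\mu_{\psi,v}$-mass, so the desired limit follows from weak convergence after the standard truncation argument: approximate $g_{\varphi,v}$ from below by continuous functions, apply the portmanteau-type convergence, and control the discarded piece using that $h_\psi(x_n)\to 0$ forces the $x_n$ not to concentrate near the polar locus. Summing the place-wise limits over the finitely many relevant $v$ yields $h_\varphi(x_n)\to\langle\varphi,\psi\rangle$.

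The principal obstacle is that the test function $g_{\varphi,v}$ is only quasi-continuous rather than continuous, so weak convergence of measures does not apply directly. This is a familiar difficulty in arithmetic equidistribution; its resolution uses that $\mu_{\psi,v}$ charges no proper Zariski-closed set (and in particular no point of the polar locus), together with a uniform control of the $\mu_{x_n,v}$-mass near the singularities. A secondary bookkeeping task is to match the normalization conventions of the local Green's functions in the two formulas for $h_\varphi$ and $\langle\varphi,\psi\rangle$ so that the limit is genuinely the Arakelov--Zhang pairing; this is essentially a chase through definitions but must be carried out carefully at every place simultaneously.
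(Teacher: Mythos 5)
Your proposal takes a genuinely different route from the paper. You decompose $h_\varphi$ directly as $\sum_v \int g_{\varphi,v}\,d\mu_{x,v}$ and then apply equidistribution to the singular test function $g_{\varphi,v}$, whereas the paper writes the \emph{difference} $h_\varphi(x)-h_\psi(x)=\sum_v r_v f_v(x)$ with $f_v(x)=\log\bigl(\|s(x)\|_{\psi,v}/\|s(x)\|_{\varphi,v}\bigr)$: the logarithmic poles at $\div(s)$ cancel in the ratio, so $f_v$ extends to a bounded continuous function on $\Psf^1_v$ and vanishes identically outside a fixed finite set $S$ of places. This makes the interchange of $\lim_n$ with the sum over $v$ trivial and lets the ordinary equidistribution theorem (for continuous test functions) apply without modification. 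One then uses $h_\psi(x_n)\to 0$ and the identity $\langle\psi,\psi\rangle=0$ to conclude. Your approach trades this for a more direct formula but forces you onto the terrain of logarithmic equidistribution.

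Herein lies a real gap. You dismiss the quasi-continuity of $g_{\varphi,v}$ as a ``familiar difficulty'' resolved by a ``standard truncation argument,'' and in particular you assert that $h_\psi(x_n)\to 0$ forces the $x_n$ not to concentrate near the polar divisor $\div(s)$. That implication is not justified and is not obviously true: $\div(s)$ is the zero of an arbitrarily chosen section $s$, with no relation to the $\psi$-dynamics, and one can certainly arrange for the representatives $x_n$ (though not most of their conjugates) to approach $\div(s)$ at a given place while $h_\psi(x_n)=0$. What the truncation argument actually requires is a \emph{quantitative, uniform-in-$n$} bound on the $\mu_{x_n,v}$-mass near $\div(s)$ weighted against the blowing-up Green's function, and extracting this from $h_\psi(x_n)\to 0$ is precisely the nontrivial content of the Chambert-Loir--Thuillier theorem on logarithmic equidistribution (which the paper cites as an independent route to this result). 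As written, your step from weak convergence to convergence against $g_{\varphi,v}$ is incomplete. A secondary but related issue: your sum $\sum_v\int g_{\varphi,v}\,d\mu_{x_n,v}$ is infinite, and you need a uniform-in-$n$ reason that the tail over places of good reduction is negligible; you gesture at this by noting $g_{\varphi,v}=g_{\psi,v}$ at almost all places, but that equality alone does not make the individual terms vanish, whereas the paper's $f_v$ is identically zero outside $S$, which is exactly the needed statement. If you replace the direct approach with the cancellation trick $h_\varphi-h_\psi$, both issues evaporate.
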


An example of a sequence $\{x_n\}$ of points in $\PP^1(\Kbar)$ with $h_\psi(x_n)\to0$ is any sequence of $\psi$-periodic points, since all such points have $h_\psi$-height zero.  In particular, averaging over all $\psi$-periodic points of a given period leads to our next result, which is similar in spirit to Theorem~\ref{IntroMainThm}, and which can be viewed as an explicit formula for $\langle\varphi,\psi\rangle$.  Define
\begin{equation}\label{SzpiroTuckerPairing}
\Sigma(\varphi,\psi)=\lim_{n\to+\infty}\frac{1}{\deg(\psi)^n+1}\sum_{\psi^n(x)=x}h_\varphi(x),
\end{equation}
where the sum is taken over all (counting according to multiplicity) $\deg(\psi)^n+1$ points $x$ in $\PP^1(\Kbar)$ satisfying $\psi^n(x)=x$, granting for now that this limit exists.  Thus $\Sigma(\varphi,\psi)$ averages the $\varphi$-canonical height of the $\psi$-periodic points of period $n$, and takes the limit as $n\to+\infty$.

\begin{thm}\label{IntroThm3}
The limit in $(\ref{SzpiroTuckerPairing})$ exists and $\Sigma(\varphi,\psi)=\langle\varphi,\psi\rangle$.
\end{thm}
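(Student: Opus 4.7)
The plan is to deduce Theorem~\ref{IntroThm3} directly from Theorem~\ref{IntroMainThm}. Since any point $x$ satisfying $\psi^n(x)=x$ is $\psi$-periodic, it has $h_\psi(x)=0$. Hence Theorem~\ref{IntroMainThm}, applied to distinct sequences drawn from the preperiodic locus $P:=\{y\in\PP^1(\Kbar):h_\psi(y)=0\}$, implies that for every $\varepsilon>0$ the exceptional set
$$E_\varepsilon:=\bigl\{y\in P:|h_\varphi(y)-\langle\varphi,\psi\rangle|>\varepsilon\bigr\}$$
is finite; otherwise one could extract an infinite distinct subsequence from $E_\varepsilon$ contradicting Theorem~\ref{IntroMainThm}.

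I would then split the quantity
$$A_n:=\frac{1}{\deg(\psi)^n+1}\sum_{\psi^n(x)=x}h_\varphi(x)$$
over $S_n:=\{x:\psi^n(x)=x\}$ (with multiplicity, so $|S_n|=\deg(\psi)^n+1$) according to whether the point lies in $E_\varepsilon$. The contribution from $S_n\setminus E_\varepsilon$ lies within $\varepsilon$ of $\langle\varphi,\psi\rangle\cdot|S_n\setminus E_\varepsilon|/|S_n|$, which tends to $\langle\varphi,\psi\rangle$ as $n\to\infty$. For the finitely many points of $E_\varepsilon$, I would verify a uniform bound on the multiplicity $m_n(y)$ of each $y\in E_\varepsilon$ as a root of $\psi^n(z)-z$: writing $m_y$ for the exact $\psi$-period of $y$ and $\lambda_y:=(\psi^{m_y})'(y)$ for its multiplier, a local expansion shows that $m_n(y)=1$ unless $\lambda_y$ is a root of unity of some order $d_y$, in which case $m_n(y)$ equals the fixed order of vanishing of $\psi^{m_yd_y}(z)-z$ at $y$, independent of $n$. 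Summing over the finitely many $y\in E_\varepsilon$ gives a constant bound $\sum_{x\in S_n\cap E_\varepsilon}h_\varphi(x)\leq C_\varepsilon$, so this contribution is $O(1/|S_n|)$ and vanishes.

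Combining these estimates, $\limsup_n A_n\leq\langle\varphi,\psi\rangle+\varepsilon$ and $\liminf_n A_n\geq\langle\varphi,\psi\rangle-\varepsilon$ for every $\varepsilon>0$; letting $\varepsilon\to 0$ simultaneously establishes that the limit in \eqref{SzpiroTuckerPairing} exists and that it equals $\langle\varphi,\psi\rangle$. The main obstacle I anticipate is confirming the uniform multiplicity bound at parabolic periodic points; while this is a local characteristic-zero computation with the iterated multiplier, it must be carried out carefully to ensure the bound does not degrade as $n$ grows through multiples of $m_yd_y$. Once that is in hand, the remainder is a clean averaging argument off the back of Theorem~\ref{IntroMainThm}.
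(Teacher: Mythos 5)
Your argument is correct, and it takes a genuinely different route from the paper. The paper proves Theorem~\ref{IntroThm3} as Corollary~\ref{EquidistCor2} by applying the averaging Theorem~\ref{MainTheorem} to the multisets $Z_n=\{x:\psi^n(x)=x\}$; the key additional input is that $\{Z_n\}$ is $\mu_\psi$-equidistributed, which is a nontrivial result credited to Ljubich (archimedean) and Favre--Rivera-Letelier (non-archimedean). You instead deduce Theorem~\ref{IntroThm3} solely from Theorem~\ref{IntroMainThm} (the distinct-small-points statement), replacing the equidistribution-of-periodic-points theorem with an elementary argument: the exceptional set $E_\varepsilon$ is finite, each periodic $y\in E_\varepsilon$ appears in $S_n$ with bounded multiplicity, so the exceptional contribution is $O(1/|S_n|)$ and the average over the rest is within $\varepsilon(1+o(1))$ of $\langle\varphi,\psi\rangle$. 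The multiplicity bound you flagged is indeed sound in characteristic zero: writing $f=\psi^{m_y d_y}$ as a germ at $y$ with $f(z)=z+a(z-y)^{p+1}+\cdots$ ($a\neq 0$), one has $f^k(z)=z+ka(z-y)^{p+1}+\cdots$, so the order of vanishing at $y$ is $p+1$ for every $n$ a multiple of $m_y d_y$, and is $1$ otherwise. The trade-off is that the paper's route is a one-line appeal to a stronger, well-established equidistribution theorem, whereas yours reduces the required black-box input (you do not need periodic-point equidistribution at all, only Galois-orbit equidistribution via Theorem~\ref{IntroMainThm}) at the cost of a short local analysis of parabolic multiplicities. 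Both are valid; yours is arguably more self-contained, while the paper's is shorter given the cited literature.
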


In particular, it follows from the symmetry of the Arakelov-Zhang pairing that $\Sigma(\varphi,\psi)=\Sigma(\psi,\varphi)$, which is far from obvious from the definition $(\ref{SzpiroTuckerPairing})$.

Using these results we can give several necessary and sufficient conditions for the vanishing of $\langle\varphi,\psi\rangle$.  Here we denote by $\PrePer(\varphi)$ the set of all preperiodic points in $\PP^1(\Kbar)$ with respect to $\varphi$; the assumption that $\varphi$ has degree at least two ensures that $\PrePer(\varphi)$ is always an infinite set.

\begin{thm}\label{IntroThm2}
The following five conditions are equivalent:
\begin{quote}
{\bf (a)}  $\langle\varphi,\psi\rangle=0$; \\
{\bf (b)}  $h_\varphi=h_\psi$; \\
{\bf (c)}  $\PrePer(\varphi)=\PrePer(\psi)$; \\
{\bf (d)}  $\PrePer(\varphi)\cap\PrePer(\psi)$ is infinite; \\
{\bf (e)}  $\liminf_{x\in\PP^1(\Kbar)}(h_\varphi(x)+h_\psi(x))=0$.
\end{quote}
\end{thm}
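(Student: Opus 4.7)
The plan is to prove the equivalences by running the cycle (b) $\Rightarrow$ (c) $\Rightarrow$ (d) $\Rightarrow$ (e) $\Rightarrow$ (a) $\Rightarrow$ (b), with the first four implications essentially formal and only the last one requiring substantive input.  The easy steps: (b) $\Rightarrow$ (c) is immediate from the characterization $\PrePer(\varphi) = h_\varphi^{-1}(0)$; (c) $\Rightarrow$ (d) is trivial since $\PrePer(\varphi)$ is infinite when $\deg\varphi \geq 2$; and (d) $\Rightarrow$ (e) follows because any infinite sequence of distinct common preperiodic points makes $h_\varphi + h_\psi$ vanish identically.  For (e) $\Rightarrow$ (a), I would extract a sequence $\{x_n\}$ of distinct points of $\PP^1(\Kbar)$ along which $(h_\varphi + h_\psi)(x_n) \to 0$; since both heights are nonnegative this forces $h_\psi(x_n) \to 0$, and Theorem~\ref{IntroMainThm} then yields $\langle\varphi,\psi\rangle = \lim h_\varphi(x_n) = 0$.

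The substantive step is (a) $\Rightarrow$ (b).  Assume $\langle\varphi,\psi\rangle = 0$.  I would begin by choosing a sequence $\{x_n\}$ of distinct $\psi$-periodic points in $\PP^1(\Kbar)$ (say, one point of minimal period $n$ for each $n \geq 1$).  Since $h_\psi(x_n) = 0$, Theorem~\ref{IntroMainThm} gives $h_\varphi(x_n) \to 0$.  Northcott's theorem, applied to a standard Weil height (which differs from both $h_\varphi$ and $h_\psi$ by bounded amounts), forces $[K(x_n):K] \to \infty$ after passage to a subsequence, so the Galois orbits $\Gal(\Kbar/K)\cdot x_n$ have cardinalities tending to infinity.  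I would then invoke the equidistribution theorems for Galois-small sequences on $\PP^1$, due to Baker--Rumely, Favre--Rivera-Letelier, and Chambert-Loir: the sequence of Galois-orbit measures of $\{x_n\}$ equidistributes to the canonical measure $\mu_{\psi,v}$ at each place $v$ of $K$ (using $h_\psi(x_n)\to 0$) and simultaneously to $\mu_{\varphi,v}$ at each place $v$ (using $h_\varphi(x_n)\to 0$).  Hence $\mu_{\varphi,v} = \mu_{\psi,v}$ for every place $v$.

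To conclude, I would use the rigidity fact that the adelic canonical metric on $\Ocal_{\PP^1}(1)$ is determined, up to a single additive constant on the induced height, by its collection of curvature measures at all places.  Thus $h_\varphi - h_\psi$ is a constant function on $\PP^1(\Kbar)$; evaluating at any $x_n$ gives this constant as $h_\varphi(x_n) - 0 \to 0$, forcing it to vanish and establishing (b).

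The main obstacle is the (a) $\Rightarrow$ (b) step: all the other implications use only Theorem~\ref{IntroMainThm} and elementary properties of canonical heights, while this one requires the equidistribution of Galois-small sequences at every place of $K$---including non-archimedean equidistribution on the Berkovich projective line---together with the rigidity statement identifying an adelic canonical metric from its curvature data.  This is exactly where the full Arakelov-theoretic machinery enters the argument.
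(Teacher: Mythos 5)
Your proposal is correct and uses essentially the same approach as the paper: the easy implications are handled identically, and the substantive step (the paper routes it as (e) $\Rightarrow$ (b), you route it as (a) $\Rightarrow$ (b) after first producing $\psi$-periodic points with $h_\varphi \to 0$) relies on the same two ingredients, namely simultaneous equidistribution to $\mu_{\varphi,v}$ and $\mu_{\psi,v}$ at every place giving $\mu_{\varphi,v}=\mu_{\psi,v}$, and the rigidity of the Laplacian (vanishing only on constants) forcing $h_\varphi - h_\psi$ to be a constant, which is then shown to be zero by evaluating on small points. The only cosmetic difference is that the paper proves (a) $\Leftrightarrow$ (e) separately and then the cycle (b) $\Rightarrow$ (c) $\Rightarrow$ (d) $\Rightarrow$ (e) $\Rightarrow$ (b), whereas you run the single five-cycle; the underlying ideas are identical.
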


Theorem~\ref{IntroThm2} states that $\langle\varphi,\psi\rangle$ vanishes precisely when $\varphi$ and $\psi$ are in some sense dynamically equivalent, and Theorems~\ref{IntroMainThm} and \ref{IntroThm3} govern the difference $h_\varphi(x)-h_\psi(x)$ between the canonical heights of points $x$ which have small canonical height with respect to at least one of the two maps $\varphi$ and $\psi$.  These results suggest that the pairing $\langle\varphi,\psi\rangle$ should be thought of as a measure of the dynamical distance between the two rational maps $\varphi$ and $\psi$.

To illustrate this idea, let $x=(x:1)$ be the usual affine coordinate on $\PP^1$, where $\infty=(1:0)$, and let $\sigma:\PP^1\to\PP^1$ be the map defined by $\sigma(x)=x^2$.  In this case the canonical height $h_{\sigma}$ is the same as the usual standard (or na\"ive) height on $\PP^1(\Kbar)$, which we denote by $h_\st$.  It then follows from Theorem~\ref{IntroThm2} that if $\psi:\PP^1\to\PP^1$ is an arbitrary map of degree at least two, then the canonical height $h_\psi$ is equal to the standard height $h_\st$ if and only if $\langle\sigma,\psi\rangle=0$.  We may therefore view $\langle\sigma,\psi\rangle$ as a natural measure of the dynamical complexity of the rational map $\psi$.  In $\S$~\ref{HeightDiffSection} we will prove the following explicit upper bound on the difference between the canonical height $h_\psi$ and the standard height $h_\st$ in terms of the pairing $\langle\sigma,\psi\rangle$.

\begin{thm}\label{HeightDiffPropIntro}
Let $\sigma:\PP^1\to\PP^1$ be the map defined by $\sigma(x)=x^2$, and let $\psi:\PP^1\to\PP^1$ be an arbitrary map of degree $d\geq2$ defined over a number field $K$.  Then
\begin{equation*}
h_\psi(x)-h_\st(x)\leq \langle\sigma,\psi\rangle + h_\psi(\infty) +\log 2
\end{equation*}
for all $x\in\PP^1(\Kbar)$.
\end{thm}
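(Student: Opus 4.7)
My plan is to work locally, using homogeneous lifts of $\sigma$ and $\psi$ to $\AA^2$, and to exploit the interpretation of $\langle\sigma,\psi\rangle$ as a sum of integrals of local canonical height differences against the $\sigma$-invariant measure.

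Choose the lift $(X,Y)\mapsto(X^2,Y^2)$ of $\sigma$ and any homogeneous lift $\Psi=(F,G)$ of $\psi$ with $F,G$ of degree $d=\deg(\psi)$. At each place $v$ of $K$, the local canonical heights are $\hat\lambda_{\sigma,v}(X,Y)=\log\max(|X|_v,|Y|_v)$ and $\hat\lambda_{\psi,v}(X,Y)=\lim_{n\to\infty}d^{-n}\log\max\bigl(|F_n(X,Y)|_v,|G_n(X,Y)|_v\bigr)$, where $(F_n,G_n)=\Psi^n$. The difference $g_v:=\hat\lambda_{\psi,v}-\hat\lambda_{\sigma,v}$ descends to a continuous bounded function on the Berkovich projective line $\PP^1_{\Berk,v}$, and after Galois-averaging, $h_\psi(x)-h_\st(x)$ is the sum over places of the values of $g_v$ at the Galois conjugates of $x$.

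The heart of the argument is a place-by-place pointwise-versus-average inequality
$$g_v(x)\;\leq\;\int_{\PP^1_{\Berk,v}}g_v\,d\mu_{\sigma,v}+g_v(\infty)+c_v,$$
valid for every $x\in\PP^1(\CC_v)$, where $\mu_{\sigma,v}$ is the $\sigma$-invariant measure and $c_v=\log 2$ at archimedean $v$, $c_v=0$ otherwise. At non-archimedean $v$, $\mu_{\sigma,v}$ is a Dirac mass at the Gauss point of $\PP^1_{\Berk,v}$, and the bound is essentially automatic from the ultrametric property together with the explicit formula for $\hat\lambda_{\sigma,v}$. At archimedean $v$, where $\mu_{\sigma,v}$ is Haar measure on the unit circle, the inequality is a potential-theoretic comparison: the circle average of $g_v$ approximates $g_v(x)$ up to a contribution at $\infty$ and an archimedean slack coming from the estimate $|a|+|b|\leq 2\max(|a|,|b|)$ applied inside the lift of $\psi$.

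Summing these local inequalities over all places yields the theorem. The sum $\sum_v\int g_v\,d\mu_{\sigma,v}$ equals $\langle\sigma,\psi\rangle$: averaging $h_\psi$ over the $\sigma^n$-fixed points---namely $0$, $\infty$, and the $(2^n-1)$-th roots of unity---gives precisely these local integrals in the limit $n\to\infty$ (since those fixed points equidistribute to $\mu_{\sigma,v}$ at each $v$), and by Theorem~\ref{IntroThm3} the limit equals $\langle\sigma,\psi\rangle$. The sum $\sum_v g_v(\infty)$ equals $h_\psi(\infty)-h_\st(\infty)=h_\psi(\infty)$ since $h_\st(\infty)=0$, and $\sum_v c_v=\log 2$ under the product-formula normalization across archimedean places of $K$. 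The main obstacle will be the archimedean local inequality; I expect it to follow from a Poisson--Jensen-type estimate for the subharmonic function $\hat\lambda_{\psi,v}(X,1)$ on $\CC$, with its circle average recovering the $\mu_{\sigma,v}$-integral, its logarithmic behavior at infinity contributing $g_v(\infty)$, and the $\ell^\infty\leq\ell^1\leq 2\ell^\infty$ comparison on $\CC^2$ generating exactly one factor of $\log 2$.
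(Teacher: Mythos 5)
Your global framework is set up correctly, and the identification $\sum_v r_v\int g_v\,d\mu_{\sigma,v}=\langle\sigma,\psi\rangle$ via averaging $h_\psi-h_\st$ over $\sigma$-periodic points (which equidistribute to $\mu_{\sigma,v}$ and have vanishing standard height) together with Theorem~\ref{IntroThm3} is sound. However, the key local inequality
\begin{equation*}
g_v(x)\;\leq\;\int g_v\,d\mu_{\sigma,v}+g_v(\infty)+c_v
\end{equation*}
cannot be correct as stated, and the claim that it is ``essentially automatic'' at non-archimedean $v$ is false. The structural problem is that it is not invariant under rescaling the lift: replacing $\Psi$ by $a\Psi$ shifts $g_v$ by the constant $\frac{1}{d-1}\log|a|_v$, which changes the left side by that constant but the right side by twice it (once through the integral, once through $g_v(\infty)$), so the validity of the inequality depends on the normalization. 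Concretely, take $K=\QQ$, $\psi(x)=px^2$, and the lift $\Psi=(pX^2,Y^2)$. At $v=p$ one finds $\hat\lambda_{\psi,p}(X,Y)=\log\max(|pX|_p,|Y|_p)$, hence $g_p(x)=\log^+|px|_p-\log^+|x|_p$. Then $g_p(0)=0$ and $g_p(\zeta_{0,1})=0$, but $g_p(\infty)=\log|p|_p=-\log p<0$, so $g_p(0)>g_p(\zeta_{0,1})+g_p(\infty)$. (With the lift $(X^2,Y^2/p)$ the inequality at $v=p$ holds with equality; what fails is that it holds for an arbitrary, or for a single uniform, lift.)

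The paper sidesteps this entirely. It uses the Szpiro--Tucker generalized Mahler formula to write $h_\psi(x)-h_\psi(\infty)=\sum_v r_v\lim_n S_{v,n}$ with $S_{v,n}=\frac{1}{d^n+1}\sum_{\psi^n(\alpha)=\alpha,\,\alpha\neq\infty}\log|x-\alpha|_v$, a local quantity built only out of $\log|x-\alpha|_v$ and so carrying no normalization ambiguity, then applies the elementary pointwise bound $\log|x-\alpha|_v\leq\log^+|x|_v+\log^+|\alpha|_v+\theta_v$ and identifies $\lim_n\frac{1}{d^n+1}\sum h_\st(\alpha)$ with $\langle\sigma,\psi\rangle$ \emph{globally} via Theorem~\ref{IntroThm3}. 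The global step matters: the local averages $\frac{1}{d^n+1}\sum_{\alpha}\log^+|\alpha|_v$ need not converge place-by-place to $\int\log^+|\cdot|_v\,d\mu_{\psi,v}$, because $\log^+|\cdot|_v$ is unbounded on $\Psf_v^1$ and equidistribution only applies to continuous test functions; the total discrepancy across places is precisely $h_\psi(\infty)$, which is where that term in the theorem really comes from. Your proposal replaces this discrepancy by $g_v(\infty)$, which is not the same thing, and the counterexample above is exactly a case where they differ. To salvage your approach you would need to either work with a normalization-free local quantity (as the paper does via $\log|x-\alpha|_v$) or prove existence of, and carefully carry along, a place-adapted family of lifts with the right adelic compatibility.
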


\subsection{Examples}  In $\S$~\ref{ExampleSection} we will give explicit upper and lower bounds on $\langle\sigma,\psi\rangle$ for the following families of rational maps $\psi$:
\begin{itemize}
	\item $\psi$ is the map $\sigma_\alpha(x)=\gamma_\alpha^{-1}\circ\sigma\circ\gamma_\alpha(x)=\alpha-(\alpha-x)^2$ defined by conjugating the squaring map $\sigma(x)=x^2$ by the automorphism $\gamma_\alpha(x)=\alpha-x$ of $\PP^1$, where $\alpha\in K$ is arbitrary.  In this case we will show that $\langle\sigma,\sigma_\alpha\rangle=h_\st(\alpha)+O(1)$.  In certain cases we can calculate the value of $\langle\sigma,\sigma_\alpha\rangle$ explicitly, showing that our inequalities are sharp.
	\item $\psi$ is the quadratic polynomial $\psi_c(x)=x^2+c$, where $c\in K$ is arbitrary.  In this case we will show that $\langle\sigma,\psi_c\rangle=\frac{1}{2}h_\st(c)+O(1)$.
	\item $\psi$ is the Latt\`es map $\psi_E(x)=(x^2+ab)^2/4x(x-a)(x+b)$ associated to the doubling map on the elliptic curve $E$ given by the Weierstrass equation $y^2=x(x-a)(x+b)$, where $a$ and $b$ are positive integers.  In this case we will show that $\langle\sigma,\psi_E\rangle=\log\sqrt{ab}+O(1)$.
\end{itemize}

\subsection{Summary of methods}  Our definition of the Arakelov-Zhang pairing $\langle\varphi,\psi\rangle$ relies on local analytic machinery.  We define $\langle\varphi,\psi\rangle$ as a sum of local terms of the form $-\int\lambda_{\varphi,v}\Delta\lambda_{\psi,v}$, where $\lambda_{\varphi,v}$ and $\lambda_{\psi,v}$ are canonical local height functions associated to $\varphi$ and $\psi$, and where $\Delta$ is a suitable $v$-adic Laplacian operator.  As shown by several authors, among them Baker-Rumely \cite{BakerRumelyBook}, Favre-Rivera-Letelier \cite{FavreRiveraLetelier}, and Thuillier \cite{Thuillier}, the natural space on which these analytic objects are defined is not the ordinary projective line $\PP^1(\CC_v)$ at the place $v$, but rather the Berkovich projective line $\Psf^1_v$.  (Briefly, $\Psf^1_v$ is a compactification of $\PP^1(\CC_v)$ which, at the non-archimedean places, has a richer analytic structure than $\PP^1(\CC_v)$; we will review the necessary facts about this space in $\S$~\ref{BerkovichSect}.)

Our proof of Theorem~\ref{IntroMainThm} relies on the equidistribution theorem for dynamically small points on $\PP^1$, which is due independently to Baker-Rumely \cite{BakerRumely}, Chambert-Loir \cite{ChambertLoir}, and Favre-Rivera-Letelier \cite{FavreRiveraLetelier}, and which is a dynamical analogue of equidistributon results on abelian varieties and algebraic tori due to Szpiro-Ullmo-Zhang \cite{SzpiroUllmoZhang} and Bilu \cite{Bilu}, respectively.  This result states that if $\{x_n\}$ is a sequence of distinct points in $\PP^1(\Kbar)$ such that $h_\psi(x_n)\to0$, then the sets of $\Gal(\Kbar/K)$-conjugates of the terms $x_n$ equidistribute with respect to the canonical measure $\mu_{\psi,v}$ on each local analytic space $\Psf_v^1$.  It follows that the heights $h_\varphi(x_n)$ tends toward an expression involving integrals of local height functions with respect to $\varphi$ against canonical measures with respect to $\psi$; this expression is precisely equal to the Arakelov-Zhang pairing $\langle\varphi,\psi\rangle$.  The proof of Theorem~\ref{IntroThm3} is similar to the proof of Theorem~\ref{IntroMainThm}, and Theorem~\ref{IntroThm2} is proved using Theorem~\ref{IntroMainThm} along with basic properties of canonical local height functions and canonical measures.  Theorem~\ref{HeightDiffPropIntro} is proved using Theorem~\ref{IntroThm3} along with the generalized Mahler formula of Szpiro-Tucker \cite{SzpiroTucker}.

\subsection{Related work by other researchers}  The pairing $\langle\varphi,\psi\rangle$ is equivalent to the arithmetic intersection product, originally defined by Zhang \cite{Zhang} following earlier work of Arakelov \cite{Arakelov}, Deligne \cite{Deligne}, Faltings \cite{Faltings}, and Bost-Gillet-Soul\'e \cite{BostGilletSoule}, between the canonical adelic metrized line bundles on $\PP^1$ associated to $\varphi$ and $\psi$.  Denote by $\Lcal_\varphi$ the canonical adelic metrized line bundle $(\Ocal(1),\|\cdot\|_{\varphi})$ on $\PP^1$ associated to $\varphi$, and define $\Lcal_\psi$ likewise for $\psi$ (we will review the definitions of these objects in $\S$~\ref{CanMetricSect}).  Then
\begin{equation}\label{ZhangPairing}
\langle\varphi,\psi\rangle = c_1(\Lcal_\varphi)c_1(\Lcal_\psi),
\end{equation}
where the left-hand-side denotes the Arakelov-Zhang pairing, as we define it analytically in $\S$~\ref{GlobalAZPairingSect}, and the right-hand-side denotes Zhang's \cite{Zhang} arithmetic intersection product between (the first Chern classes of) the adelic metrized line bundles $\Lcal_\varphi$ and $\Lcal_\psi$.

To briefly summarize Zhang's approach, he first expresses the adelic metrized line bundles $\Lcal_\varphi$ and $\Lcal_\psi$ as uniform limits of sequences $\{\Lcal_{\varphi,k}\}_{k=0}^{\infty}$ and $\{\Lcal_{\psi,k}\}_{k=0}^{\infty}$ of adelic metrized line bundles arising from arithmetic models of $\PP^1$ over $\Spec\,\Ocal_K$.  He then defines the products $c_1(\Lcal_{\varphi,k})c_1(\Lcal_{\psi,k})$ via the traditional Arakelov-theoretic combination of intersection theory at the non-archimedean places, along with harmonic analysis and Green's functions at the archimedean places.  Finally he defines $c_1(\Lcal_\varphi)c_1(\Lcal_\psi)$ as the limit of the sequence $\{c_1(\Lcal_{\varphi,k})c_1(\Lcal_{\varphi,k})\}_{k=0}^{\infty}$.  More recently, Chambert-Loir \cite{ChambertLoir} has put these intersection products onto a more analytic footing by expressing them as sums of local integrals against certain measures on the Berkovich projective line $\Psf_v^1$.  We should point that both Zhang's and Chambert-Loir's work holds for a much more general class of adelic metrized line bundles on varieties of arbitrary dimension; in this paper we treat only canonical adelic metrized line bundles $\Lcal_\varphi$ arising from dynamical systems on the projective line $\PP^1$.  We do not require the identity $(\ref{ZhangPairing})$ in this paper, and so we will not actually work out the details of the proof.  The equality between the two pairings is discussed in \cite{ChambertLoir} $\S$~2.9.

Part of Theorem~\ref{IntroThm2} is equivalent via $(\ref{ZhangPairing})$ to known results in the literature.  In particular, Zhang's successive minima theorem (\cite{Zhang} Thm.~1.10) implies the equivalence of {\bf (a)} and {\bf (e)} in Theorem~\ref{IntroThm2}, and Mimar \cite{Mimar} has shown the equivalence of conditions {\bf (c)}, {\bf (d)}, and {\bf (e)}.  The main novelty of our result is that {\bf (b)} follows from the other conditions, and that when armed with the equidistribution theorem, it is very simple to prove the equivalence of {\bf (a)}, {\bf (c)}, {\bf (d)}, and {\bf (e)}.

Kawaguchi-Silverman \cite{KawaguchiSilverman} have studied the problem of what can be deduced about two morphisms $\varphi,\psi:\PP^N\to\PP^N$ (of degree at least two) under the hypothesis that $h_\varphi=h_\psi$.  On $\PP^1$ they give a complete classification of such pairs under the additional assumptions either that both $\varphi$ and $\psi$ are polynomials, or that at least one of $\varphi$ and $\psi$ is a Latt\`es map associated to an elliptic curve.

Just before submitting this article for publication, we learned that Theorem~\ref{IntroMainThm} is equivalent to a special case of a result of Chambert-Loir and Thuillier in their recent work on equidistribution of small points on varieties with semi-positive adelic metrics; see \cite{ChambertLoirThuillier} $\S$~6.

Baker-DeMarco have recently released a preprint \cite{BakerDeMarco} which, among other things, shows that if $\varphi,\psi:\PP^1\to\PP^1$ are two rational maps (of degree at least two) defined over $\CC$, then $\PrePer(\varphi)=\PrePer(\psi)$ if and only if $\PrePer(\varphi)\cap\PrePer(\psi)$ is infinite.  This generalizes Mimar's result from maps defined over $\overline{\QQ}$ to those defined over $\CC$.  Yuan-Zhang have announced a generalization of this result to arbitrary polarized algebraic dynamical systems over $\CC$.


\section{Algebraic Preliminaries}

\subsection{Homogeneous lifts and polarizations}\label{PolarizationSect}  Let $k$ be an arbitrary field, and fix homogeneous coordinates $(x_0:x_1)$ on the projective line $\PP^1$ over $k$.  Let $\varphi:\PP^1\to\PP^1$ be a rational map of degree $d\geq2$.  A homogeneous lift of $\varphi$ is a map $\Phi=(\Phi_0,\Phi_1):k^2\to k^2$ satisfying
\begin{equation*}
\varphi(x_0:x_1)=(\Phi_{0}(x_0,x_1):\Phi_{1}(x_0,x_1))
\end{equation*}
for all $(x_0:x_1)\in\PP^1(k)$.

A polarization of $\varphi$ is a $k$-isomorphism $\epsilon:\Ocal(d)\stackrel{\sim}{\to}\varphi^*\Ocal(1)$ of sheaves.  The choice of a homogeneous lift $\Phi$ and the choice of a polarization $\epsilon$ are equivalent in the following sense.  Viewing the coordinates $x_0$ and $x_1$ on $\PP^1$ as sections $x_j\in\Gamma(\PP^1,\Ocal(1))$, define sections $\Phi_{\epsilon,0},\Phi_{\epsilon,1}\in\Gamma(\PP^1,\Ocal(d))$ by $\Phi_{\epsilon,j}=\epsilon^*\varphi^*x_j$; thus we view $\Phi_{\epsilon,0}(x_0,x_1)$ and $\Phi_{\epsilon,1}(x_0,x_1)$ as homogeneous forms in $k[x_0,x_1]$ of degree $d$, and the resulting map $\Phi_\epsilon=(\Phi_{\epsilon,0},\Phi_{\epsilon,1}):k^2\to k^2$ is a homogeneous lift of $\varphi$.  Conversely, any homogeneous lift $\Phi:k^2\to k^2$ of $\varphi$ determines a polarization $\epsilon:\Ocal(d)\stackrel{\sim}{\to}\varphi^*\Ocal(1)$ such that $\Phi=\Phi_\epsilon$.  Both $\epsilon$ and $\Phi$ are unique up to multiplication by a nonzero scalar in $k$.

\subsection{Resultants}\label{ResultantSect}  Let $\Phi:k^2\to k^2$ be a map defined by a pair $(\Phi_0(x_0,x_1),\Phi_1(x_0,x_1))$ of homogeneous forms of degree $d$ in $k[x_0,x_1]$.  The resultant of $\Phi$ is an element $\Res(\Phi)$ of $k$ which is defined by a certain integer polynomial in the coefficients of $\Phi$.  The most important property of the resultant is that $\Res(\Phi)=0$ if and only if $\Phi(x_0,x_1)=0$ for some nonzero $(x_0,x_1)\in \bar{k}$.  Thus $\Res(\Phi)=0$ if and only if $\Phi$ is a homogeneous lift of a rational map $\varphi:\PP^1\to\PP^1$.  For the definition and basic theory of the resultant see \cite{vanderWaerden} $\S$~82.


\section{Local Considerations}\label{LocalSection}

\subsection{Notation}  Throughout this section the pair $(\KK,|\cdot|)$ denotes either the complex field $\KK=\CC$ with its usual absolute value $|\cdot|$, or an arbitrary algebraically closed field $\KK$ which is complete with respect to a non-trivial, non-archimedean absolute value $|\cdot|$.  In the latter case we let $\KK^\circ= \{a\in\KK\mid|a|\leq1\}$ denote the valuation ring of $\KK$, with maximal ideal $\KK^{\circ\circ}=\{a\in\KK\mid|a|<1\}$ and residue field $\tilde{\KK}=\KK^\circ/\KK^{\circ\circ}$.  Fix once and for all homogeneous coordinates $(x_0:x_1)$ on the projective line $\PP^1$ over $\KK$.

\subsection{The Berkovich affine and projective lines}\label{BerkovichSect}  In this section we will review the definitions of the Berkovich affine and projective lines.  For more details on these objects and for the proofs of the claims we make here, see \cite{BakerRumelyBook} or \cite{Berkovich}.

The Berkovich affine line $\Asf^1$ is defined to be the set of multiplicative seminorms on the polynomial ring $\KK[T]$ in one variable.  $\Asf^1$ is a locally compact, Hausdorff topological space with respect to the weakest topology under which all real-valued functions of the form $x\mapsto[f(T)]_x$ are continuous.  Here and throughout this paper we use $[\cdot]_x$ to denote the seminorm corresponding to the point $x\in\Asf^1$.  Observe that, given an element $a\in\KK$, we have the evaluation seminorm $[f(T)]_a=|f(a)|$.  The map $a\mapsto[\cdot]_a$ defines a dense embedding $\KK\hookrightarrow\Asf^1$; it is customary to regard this as an inclusion map, and thus one identifies each point $a\in\KK$ with its corresponding seminorm $[\cdot]_a$ in $\Asf^1$.  The Berkovich projective line $\Psf^1$ over $\KK$ is defined to be the one-point-compactification $\Psf^1=\Asf^1\cup\{\infty\}$ of $\Asf^1$.  The dense inclusion map $\KK\hookrightarrow\Asf^1$ extends to a dense inclusion map $\PP(\KK)\hookrightarrow\Psf^1$ defined by $(a:1)\mapsto[\cdot]_a$ and $(1:0)\mapsto\infty$.  Since $\Asf^1$ is locally compact, $\Psf^1$ is compact.

In the archimedean case ($\KK=\CC$), it turns out that the evaluation seminorms are the {\em only} seminorms.  Thus $\Asf^1=\CC$ and $\Psf^1=\PP^1(\CC)$.  In particular, $\Psf^1$ is a compact Riemann surface of genus zero.

When $\KK$ is non-archimedean, however, the inclusion $\PP(\KK)\hookrightarrow\Psf^1$ is far from surjective.  For example, each closed disc $B(a,r)=\{z\in\KK \mid |z-a|\leq r \}$ ($a\in\KK$, $r\in|\KK|$) defines a point $\zeta_{a,r}$ in $\Asf^1$ corresponding to the sup norm $[f(T)]_{\zeta_{a,r}}=\sup_{z\in B(a,r)}|f(z)|$.  Note that under this notation, each $a\in\KK$ can also be written as the point $\zeta_{a,0}\in\Asf^1$ corresponding to a disc of radius zero.

Finally, we observe that since $\KK$ is dense in $\Asf^1$, the continuous function $\KK\to\RR$ defined by $a\mapsto|a|$ has a unique continuous extension $|\cdot|:\Asf^1\to\RR$ defined by $x\mapsto[T]_x$.  By a slight abuse of notation we will still use the notation $|\cdot|$ to refer to this extended function.  Declaring $|\infty|=+\infty$, we obtain a continuous function $|\cdot|:\Psf^1\to\RR\cup\{+\infty\}$.

\subsection{The measure-valued Laplacian}  The measure-valued Laplacian $\Delta$ on the Berkovich projective line $\Psf^1$ is an operator which assigns to a continuous function $f:\Psf^1\to\RR\cup\{\pm\infty\}$ (satisfying sufficient regularity conditions) a signed Borel measure $\Delta f$ on $\Psf^1$.

When $\KK=\CC$, $\Delta$ is the usual normalized $-dd^c$ operator on $\Psf^1=\PP^1(\CC)$ as a compact Riemann surface.  For example, assume that $f:\Psf^1\to\RR$ is twice continuously real-differentiable, identify $\Psf^1=\PP^1(\CC)=\CC\cup\{\infty\}$ by the affine coordinate $z=(z:1)\in \CC$, where $\infty=(1:0)$, and let $z=x+iy$ for real variables $x$ and $y$.  Then
\begin{equation*}
\Delta f(z)= -\frac{1}{2\pi}\bigg(\frac{\partial^2}{\partial x^2}+\frac{\partial^2}{\partial y^2}\bigg)f(z)dx\,dy.
\end{equation*}

It has been shown by several authors, including Baker-Rumely \cite{BakerRumelyBook}, Favre-Rivera-Letelier \cite{FavreRiveraLetelier}, and Thuillier \cite{Thuillier}, that the non-archimedean Berkovich projective line $\Psf^1$ carries an analytic structure, and in particular a Laplacian, which is very similar to its archimedean counterpart.  In this paper we follow the approach of Baker-Rumely \cite{BakerRumelyBook}.  To summarize their construction, viewing $\Psf^1$ as an inverse limit of finitely branched metrized graphs, they first define a signed Borel measure $\Delta f$ for a certain class of functions $f:\Psf^1\to\RR$ which are locally constant outside of some finitely-branched subgraph of $\Psf^1$.  Passing to the limit, they introduce a space of functions $f:\Psf^1\to\RR\cup\{\pm\infty\}$, which is in effect the largest class for which $\Delta f$ exists as a signed Borel measure on $\Psf^1$.

The following proposition summarizes the two basic properties of the measure-valued Laplacian which we will need in this paper: the self-adjoint property and the criterion for vanishing Laplacian.  It is stated to hold in both the archimedean and non-archimedean cases.

\begin{prop}
The measure-valued Laplacian $\Delta$ on $\Psf^1$ satisfies the following properties:
\begin{quote}
{\bf (a)} $\int fd(\Delta g)=\int gd(\Delta f)$ whenever $f$ is $\Delta g$-integrable and $g$ is $\Delta f$-integrable. \\
{\bf (b)} $\Delta f=0$ if and only if $f$ is constant.
\end{quote}
\end{prop}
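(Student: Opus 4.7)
The plan is to handle the archimedean and non-archimedean cases separately, since they rest on rather different technical machinery although the arguments are conceptually parallel.

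In the archimedean case $\KK=\CC$, where $\Psf^1=\PP^1(\CC)$ is the Riemann sphere, both statements are classical. For (a), I would verify the identity first for smooth $f$ and $g$ by applying Stokes' theorem to the $1$-form $g\,d^c f - f\,d^c g$ on the compact boundaryless surface $\PP^1(\CC)$; its exterior derivative equals $g\,\Delta f - f\,\Delta g$ up to the normalization constant $-1/2\pi$, so the integral vanishes. Extending to the larger distributional class for which $\Delta f$ and $\Delta g$ exist as signed measures is a standard regularization-and-pass-to-the-limit argument using the stated integrability hypothesis. For (b), a continuous function with vanishing distributional Laplacian on $\PP^1(\CC)$ is harmonic (Weyl's lemma), and the maximum principle on a compact connected surface without boundary forces such a function to be constant.

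In the non-archimedean case the strategy is to exploit the Baker-Rumely presentation of $\Psf^1$ as an inverse limit of finite metrized subgraphs $\Gamma$, on each of which one has a graph Laplacian $\Delta_\Gamma$ built from second derivatives along edges together with weighted sums of outgoing directional derivatives at vertices. A direct integration-by-parts computation on each edge, in which the vertex contributions cancel, yields $\int_\Gamma F\,d(\Delta_\Gamma G) = \int_\Gamma G\,d(\Delta_\Gamma F)$ for piecewise smooth $F$ and $G$ on $\Gamma$. For a general $f$ in the Baker-Rumely BDV class on $\Psf^1$, I would approximate $f$ by its composition $f\circ r_\Gamma$ with the retraction $r_\Gamma\colon\Psf^1\to\Gamma$, check (a) for these graph-supported approximations, and pass to the limit using the integrability hypothesis. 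For (b), the condition $\Delta f=0$ on $\Psf^1$ implies that $f|_\Gamma$ is harmonic on every finite subgraph; harmonicity on a connected finite metrized graph forces $f|_\Gamma$ to be a single constant by the maximum principle, and compatibility under refinement together with the density of these subgraphs in the connected space $\Psf^1$ propagates the common constant value to all of $\Psf^1$ by continuity.

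The main obstacle is the non-archimedean half of (a): one must align the precise class of functions for which $\Delta f$ is defined as a Borel measure with the stated integrability hypothesis, and verify that both sides of the identity pass through the graph-approximation limit simultaneously. This is essentially the technical heart of the Baker-Rumely development of non-archimedean potential theory, so in practice I would invoke their framework directly rather than redoing the approximation arguments; the self-adjointness and harmonic-implies-constant criteria they establish translate exactly into the two statements asserted here.
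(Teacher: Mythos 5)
Your proposal is correct and ultimately lands in the same place as the paper, which simply records that these facts are classical in the archimedean case and cites Baker--Rumely (Propositions 5.20 and 5.28 of their book) for the non-archimedean case. The Stokes/Weyl/maximum-principle sketch over $\CC$ and the metrized-graph integration-by-parts plus retraction-limit sketch over a non-archimedean field are faithful outlines of what those references actually carry out, and your closing remark that one would in practice invoke the Baker--Rumely framework directly is precisely what the paper does.
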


In the archimedean case these facts are standard.  In the non-archimedean case see \cite{BakerRumelyBook} Prop.~5.20 and Prop.~5.28.

Note that the total mass of $\Delta f$ is always zero; that is $\int 1d(\Delta f)=0$.  This follows from the self-adjoint property along with the fact that $\Delta 1=0$.

\subsection{The standard metric}\label{StandardMetricSect}  Given a line bundle $\Lcal$ on $\PP^1$ over $\KK$, recall that a metric $\|\cdot\|$ on $\Lcal$ is a nonnegative-real-valued function on $\Lcal$ whose restriction $\|\cdot\|_x$ to each fiber $\Lcal_x$ is a norm on $\Lcal_x$ as a $\KK$-vector space.  The standard metric $\|\cdot\|_\st$ on the line bundle $\Ocal(1)$ over $\KK$ is characterized by the identity
\begin{equation}\label{StandardMetric}
\|s(x)\|_\st=|s(x_0,x_1)|/\max\{|x_0|,|x_1|\}
\end{equation}
for each section $s\in\Gamma(\PP^1,\Ocal(1))$ given as a linear form $s(x_0,x_1)\in \KK[x_0,x_1]$.

\subsection{Canonical metrics}\label{CanMetricSect}  Let $\varphi:\PP^1\to\PP^1$ be a rational map of degree $d\geq2$ defined over $\KK$, and let $\epsilon:\Ocal(d)\stackrel{\sim}{\to}\varphi^*\Ocal(1)$ be a polarization.  The canonical metric on $\Ocal(1)$ associated to the pair $(\varphi,\epsilon)$, introduced by Zhang \cite{Zhang} (see also \cite{BombieriGubler} $\S$9.5.3), is defined as the (uniform) limit of the sequence $\{\|\cdot\|_{\varphi,\epsilon,k}\}_{k=0}^{\infty}$ of metrics on $\Ocal(1)$ defined inductively by $\|\cdot\|_{\varphi,\epsilon,0}=\|\cdot\|_{\st}$ and $\epsilon^*\varphi^*\|\cdot\|_{\varphi,\epsilon,k}=\|\cdot\|_{\varphi,\epsilon,k+1}^{\otimes d}$.  Zhang \cite{Zhang} showed that such a metric $\|\cdot\|_{\varphi,\epsilon}$ on $\Ocal(1)$ exists, and that it is the unique bounded, continuous metric on $\Ocal(1)$ satifying
\begin{equation}\label{CanMetricIdentity}
\epsilon^*\varphi^*\|\cdot\|_{\varphi,\epsilon}=\|\cdot\|_{\varphi,\epsilon}^{\otimes d}.
\end{equation}

To make this more explicit, let $s\in\Gamma(\PP^1,\Ocal(1))$ be a section defined over $\KK$, and let $u=\epsilon^*\varphi^*s\in\Gamma(\PP^1,\Ocal(d))$.  Factoring $u=\otimes_{j=1}^{d}s_j$ for sections $s_j\in\Gamma(\PP^1,\Ocal(1))$, it follows from the identity $(\ref{CanMetricIdentity})$ that
\begin{equation}\label{CanMetricIdentityExp}
\|s(\varphi(x))\|_{\varphi,\epsilon}=\prod_{j=1}^{d}\|s_j(x)\|_{\varphi,\epsilon}
\end{equation}
for all $x\in \PP^1(\KK)$.  The boundedness and continuity conditions mean, in effect, that for each section $s\in\Gamma(\PP^1,\Ocal(1))$ defined over $\KK$ the function $x\mapsto \log(\|s(x)\|_{\varphi,\epsilon}/\|s(x)\|_\st)$ is bounded and continuous on $\PP^1(\KK)$.

The dependence of $\|\cdot\|_{\varphi,\epsilon}$ on the polarization $\epsilon$ can be made explicit as follows.  If $\epsilon$ is replaced by another polarization $a\epsilon$ for $a\in \KK^\times$, then $\|\cdot\|_{\varphi,a\epsilon}=|a|^{1/(d-1)}\|\cdot\|_{\varphi,\epsilon}$; this follows from $(\ref{CanMetricIdentity})$ and the uniqueness of the canonical metric.

\subsection{The standard measure}\label{StandardMeasureSect}  Given a section $s\in\Gamma(\PP^1,\Ocal(1))$, the function $\PP^1(\KK)\to\RR\cup\{+\infty\}$ defined by $x\mapsto-\log\|s(x)\|_{\st}$ extends uniquely to a continuous function $\Psf^1\to\RR\cup\{+\infty\}$; this follows from $(\ref{StandardMetric})$ and the last paragraph of $\S$~\ref{BerkovichSect}.  The standard measure $\mu_\st$ on $\Psf^1$ is characterized by the identity
\begin{equation}\label{StandardMeasureDef}
\Delta\{-\log\|s(x)\|_{\st}\}=\delta_{\div(s)}(x)-\mu_\st(x),
\end{equation}
for any section $s\in\Gamma(\PP^1,\Ocal(1))$, where $\delta_{\div(s)}$ denotes the Dirac measure supported at the point $\div(s)$.

In fact, $\mu_\st$ can be described more explicitly as follows.  When $\KK=\CC$, we identify $\Psf^1=\PP^1(\CC)=\CC\cup\{\infty\}$ as usual by the affine coordinate $z=(z:1)\in \CC$, where $\infty=(1:0)$.  Then $\mu_\st$ is supported on the unit circle $\TT=\{z\in\CC\mid |z|=1\}$, where it is equal to the Haar measure on $\TT$ normalized to have total mass $1$.  When $\KK$ is non-archimedean, $\mu_\st$ is equal to the Dirac measure $\delta_{\zeta_{0,1}}$ supported at point $\zeta_{0,1}\in\Psf^1$ corresponding to the sup-norm on the unit disc of $\KK$, as described in $\S$~\ref{BerkovichSect}.

\subsection{Canonical measures}\label{CanonicalMeasureSect}  Let $\varphi:\PP^1\to\PP^1$ be a rational map of degree $d\geq2$ defined over $\KK$.  The canonical measure $\mu_\varphi$ on $\Psf^1$ is an important dynamical invariant associated to $\varphi$, which plays a fundamental role in several dynamical equidistribution theorems.  In the archimedean case it was introduced by Brolin \cite{Brolin} for polynomial maps $\varphi$, and independently by Ljubich \cite{Ljubich} and Freire-Lopes-Ma\~n\'e \cite{FreireLopesMane} for arbitrary $\varphi$.  In the non-archimedean case the measure was defined independently and equivalently by several authors, among them Baker-Rumely \cite{BakerRumely}, Chambert-Loir \cite{ChambertLoir}, and Favre-Rivera-Letelier \cite{FavreRiveraLetelier}.

The measure $\mu_\varphi$ is defined as the weak limit of the sequence $\{\mu_{\varphi,k}\}_{k=0}^{\infty}$ of measures on $\Psf^1$ defined inductively by $\mu_{\varphi,0}=\mu_\st$ and $\mu_{\varphi,k+1}=\frac{1}{d}\varphi^*\mu_{\varphi,k}$.  Equivalently, each measure $\mu_{\varphi,k}$ is characterized by the identity
\begin{equation*}
\Delta\{-\log\|s(x)\|_{\varphi,\epsilon,k}\}=\delta_{\div(s)}(x)-\mu_{\varphi,k}(x),
\end{equation*}
where $\epsilon:\Ocal(d)\stackrel{\sim}{\to}\varphi^*\Ocal(1)$ is any polarization, $\{\|\cdot\|_{\varphi,\epsilon,k}\}_{k=0}^{\infty}$ is the sequence of metrics defined in $\S$~\ref{CanMetricSect}, and $s\in\Gamma(\PP^1,\Ocal(1))$ is any section.  The canonical measure $\mu_\varphi$ on $\Psf^1$ is defined to be the (unique) weak limit of the sequence $\{\mu_{\varphi,k}\}_{k=0}^{\infty}$.  This measure is characterized by the identity
\begin{equation}\label{CanMeasureDef}
\Delta\{-\log\|s(x)\|_{\varphi,\epsilon}\}=\delta_{\div(s)}(x)-\mu_\varphi(x),
\end{equation}
where $\|\cdot\|_{\varphi,\epsilon}$ is the canonical metric on $\Ocal(1)$ with respect to the pair $(\varphi,\epsilon)$.  Moreover, $\mu_\varphi$ satisfies the invariance property $\varphi^*\mu_\varphi=d\cdot\mu_\varphi$; see \cite{BakerRumely} Thm. 3.36.  In effect, this means that
\begin{equation}\label{CanMeasureInvariant}
\int f(x)d\mu_\varphi(x) = \int f(\varphi(x))d\mu_\varphi(x)
\end{equation}
for any extended-real-valued function $f$ on $\Psf^1$ such that both $x\mapsto f(x)$ and $x\mapsto f(\varphi(x))$ are $\mu_\varphi$-integrable.

\subsection{The local Arakelov-Zhang pairing}  Let $\varphi:\PP^1\to\PP^1$ and $\psi:\PP^1\to\PP^1$ be two rational maps defined over $\KK$, and let $s,t\in\Gamma(\PP^1,\Ocal(1))$ be two sections with $\div(s)\neq\div(t)$.  We define the local Arakelov-Zhang pairing of $\varphi$ and $\psi$, with respect to the sections $s$ and $t$, by
\begin{equation}\label{LocalAZPairing}
\begin{split}
\langle\varphi,\psi\rangle_{s,t} & = -\int\{\log\|s(x)\|_{\varphi,\epsilon_\varphi}\}d\Delta\{\log\|t(x)\|_{\psi,\epsilon_\psi}\} \\
	& = \log\|s(\div(t))\|_{\varphi,\epsilon_\varphi} - \int\log\|s(x)\|_{\varphi,\epsilon_\varphi} d\mu_{\psi}(x).
\end{split}
\end{equation}
where $\epsilon_\varphi$ and $\epsilon_\psi$ are any polarizations of $\varphi$ and $\psi$, respectively.  Note that $\langle\varphi,\psi\rangle_{s,t}$ does not depend on the choice of polarzations $\epsilon_\varphi$ and $\epsilon_\psi$.  To see this, note that the discussion at the end of $\S$~\ref{CanMetricSect} implies that if we replace $\epsilon_\varphi$ and $\epsilon_\psi$ by $\epsilon'_\varphi=a\epsilon_\varphi$ and $\epsilon'_\psi=b\epsilon_\psi$ (respectively) for $a,b\in\KK^\times$, then the functions $-\log\|s(x)\|_\varphi$ and $-\log\|t(x)\|_\psi$ are altered by additive constants.  Since $\Delta1=0$ and $\int 1 d\Delta\{\log\|t(x)\|_\psi\}=0$, the value of $(\ref{LocalAZPairing})$ remains unchanged.

We next observe that the local Arakelov-Zhang pairing satisfies the symmetry property
\begin{equation}\label{LocalSymmetry}
\langle\varphi,\psi\rangle_{s,t} = \langle\psi,\varphi\rangle_{t,s},
\end{equation}
by the self-adjoint property of the measure-valued Laplacian.

Finally, we note that the value of $\langle\varphi,\psi\rangle_{s,t}$ depends on the choice of sections $s$ and $t$ as follows.  If $t'\in\Gamma(\PP^1,\Ocal(1))$ is another section with $\div(s)\neq\div(t')$, then $(\ref{LocalAZPairing})$ implies that
\begin{equation}\label{SectionChange1}
\langle\varphi,\psi\rangle_{s,t'} = \langle\varphi,\psi\rangle_{s,t} + \log\frac{\|s(\div(t'))\|_{\varphi,\epsilon_\varphi}}{\|s(\div(t))\|_{\varphi,\epsilon_\varphi}}.
\end{equation}
The symmetry relation $(\ref{LocalSymmetry})$ along with $(\ref{SectionChange1})$ implies that if $s'\in\Gamma(\PP^1,\Ocal(1))$ is another section with $\div(s')\neq\div(t)$, then
\begin{equation}\label{SectionChange2}
\langle\varphi,\psi\rangle_{s',t} = \langle\varphi,\psi\rangle_{s,t} + \log\frac{\|t(\div(s'))\|_{\psi,\epsilon_\psi}}{\|t(\div(s))\|_{\psi,\epsilon_\psi}}.
\end{equation}

\subsection{Good reduction}  In section we assume that $\KK$ is non-archimedean.  Let $\varphi:\PP^1\to\PP^1$ be a rational map of degree $d\geq2$ defined over $\KK$, let $\epsilon:\Ocal(d)\stackrel{\sim}{\to}\varphi^*\Ocal(1)$ be a polarization, and let $\Phi_\epsilon:\KK^2\to\KK^2$ be the homogenous map associated to the pair $(\varphi,\epsilon)$, as discussed in $\S$~\ref{PolarizationSect}.  Note that $\Res(\Phi_\epsilon)\neq0$ since $\Phi_\epsilon$ is a homogeneous lift of the rational map $\varphi$.  We say the pair $(\varphi,\epsilon)$ has good reduction if the map $\Phi_\epsilon$ has coefficients in $\KK^\circ$ and $|\Res(\Phi_\epsilon)|=1$.  In this case the reduced map $\tilde{\Phi}_\epsilon:\tilde{\KK}^2\to\tilde{\KK}^2$ has nonzero resultant $\Res(\tilde{\Phi}_\epsilon)$ in the residue field $\tilde{\KK}$, and therefore defines a reduced rational map $\tilde{\varphi}:\PP^1\to\PP^1$ over $\tilde{\KK}$.

\begin{prop}\label{GoodReductionProp}
Let $\varphi:\PP^1\to\PP^1$ be a rational map of degree $d\geq2$ defined over $\KK$, let $\epsilon$ be a polarization of $\varphi$, and suppose that the pair $(\varphi,\epsilon)$ has good reduction.  Then $\|\cdot\|_{\varphi,\epsilon}=\|\cdot\|_\st$ and $\mu_{\varphi}=\mu_\st$.
\end{prop}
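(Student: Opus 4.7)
The plan is to verify that the standard metric $\|\cdot\|_\st$ itself satisfies the defining functional equation $(\ref{CanMetricIdentity})$ under the good reduction hypothesis; by the uniqueness clause recalled in $\S$\ref{CanMetricSect} (the canonical metric is the unique bounded, continuous metric satisfying that equation), this forces $\|\cdot\|_{\varphi,\epsilon}=\|\cdot\|_\st$. The equality of measures $\mu_\varphi=\mu_\st$ then drops out immediately upon comparing $(\ref{CanMeasureDef})$ with $(\ref{StandardMeasureDef})$, since both assert that $\Delta\{-\log\|s(x)\|_\st\}$ equals $\delta_{\div(s)}$ minus the measure in question.

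The heart of the argument is the claim
\begin{equation*}
\max\{|\Phi_{\epsilon,0}(x_0,x_1)|,|\Phi_{\epsilon,1}(x_0,x_1)|\}=\max\{|x_0|,|x_1|\}^d \qquad \text{for all } (x_0,x_1)\in\KK^2.
\end{equation*}
The upper bound is immediate from the ultrametric inequality, since each form $\Phi_{\epsilon,j}$ has coefficients in $\KK^\circ$ and total degree $d$. The lower bound is where good reduction is essential: from the classical resultant elimination identities (\cite{vanderWaerden} $\S$82) one obtains, for $j\in\{0,1\}$, homogeneous forms $A_{j0},A_{j1}$ of degree $d-1$ in $x_0,x_1$, whose coefficients are polynomial in those of $\Phi_\epsilon$ and hence lie in $\KK^\circ$, such that
\begin{equation*}
A_{j0}(x_0,x_1)\Phi_{\epsilon,0}(x_0,x_1)+A_{j1}(x_0,x_1)\Phi_{\epsilon,1}(x_0,x_1)=\Res(\Phi_\epsilon)\,x_j^{2d-1}.
\end{equation*}
Taking absolute values, invoking $|\Res(\Phi_\epsilon)|=1$, and choosing $j$ with $|x_j|=\max\{|x_0|,|x_1|\}$ yields the reverse bound.

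With this identity in hand, the functional equation for $\|\cdot\|_\st$ follows by evaluating on the coordinate sections: writing $\varphi(x_0:x_1)=(\Phi_{\epsilon,0}:\Phi_{\epsilon,1})$, we have $\epsilon^*\varphi^*x_j=\Phi_{\epsilon,j}$, and
\begin{equation*}
\|x_j(\varphi(x))\|_\st=\frac{|\Phi_{\epsilon,j}(x_0,x_1)|}{\max\{|\Phi_{\epsilon,0}|,|\Phi_{\epsilon,1}|\}}=\frac{|\Phi_{\epsilon,j}(x_0,x_1)|}{\max\{|x_0|,|x_1|\}^d},
\end{equation*}
which agrees with the evaluation of $\|\cdot\|_\st^{\otimes d}$ on the section $\Phi_{\epsilon,j}\in\Gamma(\PP^1,\Ocal(d))$. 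Since $\Gamma(\PP^1,\Ocal(1))$ is spanned by $x_0$ and $x_1$, this verifies $(\ref{CanMetricIdentity})$ for $\|\cdot\|_\st$ and closes the loop. The main obstacle is the resultant-based lower bound on $\max\{|\Phi_{\epsilon,0}|,|\Phi_{\epsilon,1}|\}$; once that is in hand, everything else in the argument is essentially formal.
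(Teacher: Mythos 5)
Your proof is correct, and the overall structure matches the paper's: establish the norm identity $\max\{|\Phi_{\epsilon,0}|,|\Phi_{\epsilon,1}|\}=\max\{|x_0|,|x_1|\}^d$, use it to verify that $\|\cdot\|_\st$ satisfies the functional equation $(\ref{CanMetricIdentity})$ characterizing $\|\cdot\|_{\varphi,\epsilon}$, and then read off $\mu_\varphi=\mu_\st$ from $(\ref{CanMeasureDef})$ and $(\ref{StandardMeasureDef})$. The one place you diverge is the lower bound in the norm identity. The paper scales so that $\max\{|x_0|,|x_1|\}=1$ and argues by reduction mod the maximal ideal: if $\max\{|\Phi_{\epsilon,0}|,|\Phi_{\epsilon,1}|\}<1$, the reduced point would be a nonzero common zero of $\tilde\Phi_\epsilon$, contradicting $\Res(\tilde\Phi_\epsilon)\neq0$. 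You instead invoke the explicit resultant elimination identities $A_{j0}\Phi_{\epsilon,0}+A_{j1}\Phi_{\epsilon,1}=\Res(\Phi_\epsilon)\,x_j^{2d-1}$ with $A_{jk}$ of degree $d-1$ having coefficients in $\KK^\circ$, and take absolute values. Both arguments are standard and rest on $|\Res(\Phi_\epsilon)|=1$; yours is slightly more quantitative and avoids passing to the residue field, while the paper's is perhaps shorter to state. Either way the rest of the argument is, as you note, formal.
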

\begin{proof}
We will first show that
\begin{equation}\label{NormIdentity}
\max\{|\Phi_{\epsilon,0}(x_0,x_1)|,|\Phi_{\epsilon,1}(x_0,x_1)|\}=\max\{|x_0|,|x_1|\}^d
\end{equation}
for all $(x_0,x_1)\in\KK^2$.  By scaling it suffices to prove $(\ref{NormIdentity})$ when $\max\{|x_0|,|x_1|\}=1$.  In this case the left-hand-side of  $(\ref{NormIdentity})$ is at most $1$ by the ultrametric inequality, since $\Phi_\epsilon$ has coefficients in $\KK^\circ$.  If there exists some $(x_0,x_1)\in\KK^2$ with $\max\{|x_0|,|x_1|\}=1$, but with the left-hand-side of $(\ref{NormIdentity})$ strictly less than $1$, then the reduction $(\tilde{x}_0,\tilde{x}_1)$ is nonzero in $\tilde{\KK}^2$, but $\tilde{\Phi}_\epsilon(\tilde{x}_0,\tilde{x}_1)=0$ in $\tilde{\KK}^2$, contradicting the fact that $\Res(\tilde{\Phi}_\epsilon)\neq0$ in $\tilde{\KK}$.  This completes the proof of $(\ref{NormIdentity})$.

Now let $s\in\Gamma(\PP^1,\Ocal(1))$ be a section, let $u=\epsilon^*\varphi^*s\in\Gamma(\PP^1,\Ocal(d))$, and factor $u=\otimes_{j=1}^{d}s_j$ for sections $s_j\in\Gamma(\PP^1,\Ocal(1))$.  It follows from $(\ref{NormIdentity})$ and the formula $(\ref{StandardMetric})$ for the standard metric that $\|s(\varphi(x))\|_{\st}=\prod_{j=1}^{d}\|s_j(x)\|_{\st}$ for all $x\in \PP^1(\KK)$.  In other words, the standard metric $\|\cdot\|_\st$ satisfies the identity which characterizes the canonical metric $\|\cdot\|_{\varphi,\epsilon}$, whereby $\|\cdot\|_{\varphi,\epsilon}=\|\cdot\|_\st$.  It now follows from $(\ref{CanMeasureDef})$ and $(\ref{StandardMeasureDef})$ that $\mu_{\varphi}=\mu_\st$.
\end{proof}

\begin{prop}\label{GoodReductionAZProp}
Let $\varphi:\PP^1\to\PP^1$ and $\psi:\PP^1\to\PP^1$ be rational maps of degree at least two defined over $\KK$, and assume that both $\varphi$ and $\psi$ have good reduction (with respect to some choice of polarizations).  Let $s,t\in\Gamma(\PP^1,\Ocal(1))$ be sections, with $\div(s)\neq\div(t)$, given by linear forms $s(x_0,x_1)=s_0x_0+s_1x_1$ and $t(x_0,x_1)=t_0x_0+t_1x_1$ in $\KK[x_0,x_1]$.  Then
\begin{equation*}
\langle\varphi,\psi\rangle_{s,t}=\log|s_0t_1-s_1t_0|-\log\max\{|s_0|,|s_1|\}-\log\max\{|t_0|,|t_1|\}.
\end{equation*}
\end{prop}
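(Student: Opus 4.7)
The plan is to reduce everything to the good-reduction versions of the canonical metric and canonical measure, then evaluate two explicit expressions.

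First I would invoke Proposition~\ref{GoodReductionProp} for both maps: since $(\varphi,\epsilon_\varphi)$ and $(\psi,\epsilon_\psi)$ have good reduction, we have $\|\cdot\|_{\varphi,\epsilon_\varphi}=\|\cdot\|_\st$ and $\mu_\psi=\mu_\st$. Substituting into the definition of the local Arakelov-Zhang pairing in $(\ref{LocalAZPairing})$ collapses the problem to computing
\begin{equation*}
\langle\varphi,\psi\rangle_{s,t}=\log\|s(\div(t))\|_\st-\int\log\|s(x)\|_\st\, d\mu_\st(x).
\end{equation*}

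Next I would handle the first term. Since $t(x_0,x_1)=t_0x_0+t_1x_1$, the divisor $\div(t)$ is the point $(-t_1:t_0)$ (and equals $(1:0)$ if $t_0=0$, which is a harmless special case). Applying the defining formula $(\ref{StandardMetric})$ for the standard metric gives
\begin{equation*}
\|s(\div(t))\|_\st=\frac{|s_0(-t_1)+s_1 t_0|}{\max\{|t_1|,|t_0|\}}=\frac{|s_0t_1-s_1t_0|}{\max\{|t_0|,|t_1|\}}.
\end{equation*}

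For the integral I would use the description of $\mu_\st$ from $\S$~\ref{StandardMeasureSect}: in the non-archimedean case, $\mu_\st=\delta_{\zeta_{0,1}}$, the Dirac mass at the Gauss point. So the integral reduces to evaluating $\log\|s(x)\|_\st$ at $\zeta_{0,1}$. Using the last paragraph of $\S$~\ref{BerkovichSect}, the continuous extension of $x\mapsto|s_0 x_0+s_1 x_1|$ to the Berkovich line evaluated at $\zeta_{0,1}$ is the Gauss seminorm of the polynomial $s_0 T+s_1$, namely $\max\{|s_0|,|s_1|\}$ (the key small computation is that for a linear polynomial, the sup norm on the unit disc equals the max of the absolute values of its coefficients, which follows from the ultrametric inequality together with the existence of residues realizing the maximum). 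Likewise the extension of $\max\{|x_0|,|x_1|\}$ to $\zeta_{0,1}$ equals $\max\{1,1\}=1$, so $\|s(\zeta_{0,1})\|_\st=\max\{|s_0|,|s_1|\}$, and hence
\begin{equation*}
\int\log\|s(x)\|_\st\, d\mu_\st(x)=\log\max\{|s_0|,|s_1|\}.
\end{equation*}

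Combining the two computations yields the claimed identity. The only nontrivial step is the Berkovich-line evaluation in the last paragraph, and it is routine once one recalls that $\zeta_{0,1}$ corresponds to the Gauss norm; all other steps are just bookkeeping using results already in the excerpt.
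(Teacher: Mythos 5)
Your proof is correct, and it follows a genuinely different route from the paper's. You compute $I(s):=\int\log\|s(x)\|_\st\,d\mu_\st(x)$ directly by identifying $\mu_\st$ with the Dirac mass at the Gauss point $\zeta_{0,1}$ and then evaluating the Gauss norm of the linear polynomial $s_0T+s_1$, which equals $\max\{|s_0|,|s_1|\}$. The paper deliberately avoids this explicit evaluation: it first uses the symmetry $\langle\varphi,\psi\rangle_{s,t}=\langle\psi,\varphi\rangle_{t,s}$ to show that $I(s)-\log\max\{|s_0|,|s_1|\}$ is a constant $c$ independent of $s$, and then uses the invariance $\varphi^*\mu_\varphi=d\cdot\mu_\varphi$ (together with good reduction, $\mu_\varphi=\mu_\st$) applied to the section $s=x_0$ to force $c=dc$, hence $c=0$. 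Your direct computation is shorter and more concrete, at the price of invoking the explicit Berkovich-theoretic description of $\mu_\st$ and the classical fact that the Gauss norm is the maximum of coefficient absolute values (your sketch of why the sup is attained, via residues in the algebraically closed residue field, is adequate); the paper's argument is softer, trading the explicit Gauss-point computation for formal properties (self-adjointness of $\Delta$ giving the symmetry, and invariance of the canonical measure). Both are complete proofs.
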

\begin{proof}
Let us abbreviate $I(s)=\int\log\|s(x)\|_{\st} d\mu_{\st}(x)$ and $\ell(s)=\log\max\{|s_0|,|s_1|\}$.  Since $\div(t)=(t_1:-t_0)$, by $(\ref{LocalAZPairing})$ and Proposition~\ref{GoodReductionProp} we have
\begin{equation}\label{GoodReductionPairing}
\langle\varphi,\psi\rangle_{s,t} = \log\|s(\div(t))\|_{\st} - I(s) = \log|s_0t_1-s_1t_0| -\ell(t) - I(s).
\end{equation}
The symmetry property $\langle\varphi,\psi\rangle_{s,t}=\langle\psi,\varphi\rangle_{t,s}$ and $(\ref{GoodReductionPairing})$ imply that $-\ell(t) - I(s)=-\ell(s) - I(t)$, which means that $I(s)-\ell(s)=I(t)-\ell(t)=c$ for some constant $c$ which is independent of $s$ and $t$.  We are going to show that $c=0$, whereby $I(s)=\ell(s)$; together with $(\ref{GoodReductionPairing})$ this implies the desired identity.

To show $c=0$ we select the section $s(x_0,x_1)=x_0$; thus $\ell(s)=0$.  Let $\epsilon:\Ocal(d)\stackrel{\sim}{\to}\varphi^*\Ocal(1)$ be a polarization of $\varphi$ such that $(\varphi,\epsilon)$ has good reduction, and factor $\Phi_{\epsilon,0}=\epsilon^*\varphi^*s\in\Gamma(\PP^1,\Ocal(d))$ as $\Phi_{\epsilon,0}=\otimes_{j=1}^{d}s_j$ for sections $s_j\in\Gamma(\PP^1,\Ocal(1))$.  In terms of homogeneous forms in the coordinates $x_0$ and $x_1$ this means that $\Phi_{\epsilon,0}(x_0,x_1)=\prod_{j=1}^{d}s_j(x_0,x_1)$  The good reduction assumption on the pair $(\varphi,\epsilon)$ implies that the form $\Phi_{\epsilon,0}(x_0,x_1)$ is defined over $\KK^\circ$ and reduces to a nonzero form $\tilde{\Phi}_{\epsilon,0}(x_0,x_1)$ of degree $d$ over $\tilde{\KK}$.  In particular, each linear form $s_j(x_0,x_1)$ is defined over $\KK^\circ$ and reduces to a nonzero linear form $\tilde{s}_j(x_0,x_1)$ over $\tilde{\KK}$.  This means that $\ell(s_j)=0$ for each $1\leq j\leq d$.  Using the invariance property $(\ref{CanMeasureInvariant})$ along with $(\ref{CanMetricIdentityExp})$ and the fact that $\|\cdot\|_{\varphi,\epsilon}=\|\cdot\|_{\st}$, we have
\begin{equation*}
\int\log\|s(x)\|_{\st} d\mu_{\st}(x)  = \int\log\|s(\varphi(x))\|_{\st} d\mu_{\st}(x) = \sum_{j=1}^{d}\int\log\|s_j(x)\|_{\st} d\mu_{\st}(x),
\end{equation*}
which means that $I(s)=\sum_{j=1}^{d}I(s_j)$.  Since $\ell(s)=\ell(s_j)=0$ for each $1\leq j\leq d$, we conclude that $c=I(s)-\ell(s)=\sum_{j=1}^{d}(I(s_j)-\ell(s_j))=dc$, whereby $c=0$ as desired.
\end{proof}


\section{Global Considerations}

\subsection{Preliminaries}  Let $K$ be a number field, and let $M_K$ denote the set of places of $K$.  For each $v\in M_K$ we set the following local notation:
\begin{itemize}
	\item $K_v$ -- the completion of $K$ with respect to $v$.
	\item $\CC_v$ -- the completion of the algebraic closure of $K_v$; thus $\CC_v$ is both complete and algebraically closed.
	\item $|\cdot|_v$ -- the absolute value on $\CC_v$ whose restriction to $\QQ$ coincides with one of the usual real or $p$-adic absolute values.
	\item $r_v=[K_v:\QQ_v]/[K:\QQ]$ -- the ratio of local and global degrees.
\end{itemize}
More generally, all local notation introduced in $\S$~\ref{LocalSection} will now carry a subscript $v$ when attached to the pair $(\KK,|\cdot|)=(\CC_v,|\cdot|_v)$.  Given a line bundle $\Lcal$ on the projective line $\PP^1$ over $K$, an adelic metric $\|\cdot\|$ on $\Lcal$ is a family $\|\cdot\|=(\|\cdot\|_v)$, indexed by the places $v\in M_K$, where each $\|\cdot\|_v$ is a metric on $\Lcal$ over $\CC_v$.

With the absolute values $|\cdot|_v$ normalized as above, the product formula holds in the form $\sum_{v\in M_K}r_v\log|a|_v=0$ for each $a\in K^\times$.  If $K'/K$ is a finite extension, we have the local-global degree formula $\sum_{v'\mid v}r_{v'}=r_v$ for each place $v\in M_K$, where the sum is over the set of places $v'$ of $K'$ lying over $v$.  Fix once and for all homogeneous coordinates $(x_0:x_1)$ on the projective line $\PP^1$ over $K$.  Given a point $x=(x_0:x_1)\in\PP^1(K)$, define the standard height of $x$ by
\begin{equation}\label{StandardHeight}
h(x)=\sum_{v\in M_K}r_v\log\max\{|x_0|_v,|x_1|_v\}.
\end{equation}
By the product formula the value of $h(x)$ is invariant under replacing $(x_0:x_1)$ with $(ax_0:ax_1)$ for $a\in K^\times$, and the local-global degree formula shows that the value of $h(x)$ is unchanged upon replacing $K$ with a finite extension $K'/K$.  Thus $(\ref{StandardHeight})$ defines a $\Gal(\Kbar/K)$-invariant function $h:\PP^1(\Kbar)\to\RR$.

Alternatively, the standard height can be described in terms of the standard adelic metric $\|\cdot\|_\st=(\|\cdot\|_{\st,v})$ on the line bundle $\Ocal(1)$ over $K$.  Given a section $s\in\Gamma(\PP^1,\Ocal(1))$ defined over $K$ and a point $x\in\PP^1(K)\setminus\{\div(s)\}$, we have
\begin{equation}\label{StandardHeightAlt}
h(x)=\sum_{v\in M_K}r_v\log\|s(x)\|^{-1}_{\st,v}
\end{equation}
by $(\ref{StandardMetric})$, the product formula, and $(\ref{StandardHeight})$.

\subsection{Canonical heights and canonical adelic metrics}  Let $\varphi:\PP^1\to\PP^1$ be a rational map of degree $d\geq2$ defined over $K$.  The canonical height function $h_\varphi:\PP^1(\Kbar)\to\RR$ associated to $\varphi$ is the unique real-valued function on $\PP^1(\Kbar)$ satisfying the two properties:
\begin{itemize}
	\item The function $x\mapsto h(x)-h_\varphi(x)$ is bounded on $\PP^1(\Kbar)$;
	\item $h_\varphi(\varphi(x))=dh_\varphi(x)$ for all $x\in \PP^1(\Kbar)$.
\end{itemize}
Moreover, $h_\varphi$ is $\Gal(\Kbar/K)$-invariant and nonnegative, and $h_\varphi(x)=0$ if and only if $x$ is $\varphi$-preperiodic.  The existence, uniqueness, and basic properties of the canonical height were established by Call-Silverman \cite{CallSilverman} (see also Zhang \cite{Zhang}).

Given a $K$-polarization $\epsilon:\Ocal(d)\stackrel{\sim}{\to}\varphi^*\Ocal(1)$ of $\varphi$, we define the canonical adelic metric on $\Ocal(1)$ associated to the pair $(\varphi,\epsilon)$ to be the family $\|\cdot\|_{\varphi,\epsilon}=(\|\cdot\|_{\varphi,\epsilon,v})$ of canonical metrics over $\CC_v$ each place $v\in M_K$.  The following proposition is analogous the the decomposition $(\ref{StandardHeightAlt})$, giving a formula for the canonical height in terms of the canonical adelic metric.

\begin{prop}
Let $\varphi:\PP^1\to\PP^1$ be a rational map of degree $d\geq2$ defined over $K$, let $\epsilon:\Ocal(d)\stackrel{\sim}{\to}\varphi^*\Ocal(1)$ be a $K$-polarization, and let $s\in\Gamma(\PP^1,\Ocal(1))$ be a section defined over $K$.  Then
\begin{equation}\label{CanonicalLocalGlobal}
h_\varphi(x)=\sum_{v\in M_K}r_v\log\|s(x)\|^{-1}_{\varphi,\epsilon,v}
\end{equation}
for all $x\in\PP^1(K)\setminus\{\div(s)\}$.
\end{prop}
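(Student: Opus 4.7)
The plan is to let $\tilde h_\varphi(x)$ denote the right-hand side of $(\ref{CanonicalLocalGlobal})$ and verify that $\tilde h_\varphi$ satisfies the two characterizing properties of the Call-Silverman canonical height — the difference $\tilde h_\varphi - h$ is bounded, and $\tilde h_\varphi(\varphi(x))=d\,\tilde h_\varphi(x)$ — whereupon the uniqueness statement recalled above will force $\tilde h_\varphi=h_\varphi$. First one must check that $\tilde h_\varphi(x)$ is well-defined. Fixing the global lift $\Phi_\epsilon$ over $K$, at all but finitely many places $v\in M_K$ the coefficients of $\Phi_\epsilon$ are $v$-integral and $\Res(\Phi_\epsilon)$ is a $v$-adic unit, so the pair $(\varphi,\epsilon)$ has good reduction at $v$ and Proposition~\ref{GoodReductionProp} yields $\|\cdot\|_{\varphi,\epsilon,v}=\|\cdot\|_{\st,v}$. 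Independence of $\tilde h_\varphi$ from the choice of polarization then follows from $\|\cdot\|_{\varphi,a\epsilon}=|a|^{1/(d-1)}\|\cdot\|_{\varphi,\epsilon}$ recorded at the end of $\S$~\ref{CanMetricSect}, combined with the product formula applied to $a\in K^\times$.

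Subtracting $(\ref{StandardHeightAlt})$ from the definition of $\tilde h_\varphi$ gives $\tilde h_\varphi(x) - h(x) = \sum_v r_v \log(\|s(x)\|_{\st,v}/\|s(x)\|_{\varphi,\epsilon,v})$; every good-reduction place contributes zero, while at each of the finitely many remaining places the integrand is bounded on $\PP^1(\CC_v)$ by the boundedness assertion at the end of $\S$~\ref{CanMetricSect}. Summing over this finite bad set yields the required uniform bound $\tilde h_\varphi - h = O(1)$.

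For the functional equation, apply $(\ref{CanMetricIdentityExp})$ at each place: writing $u=\epsilon^*\varphi^*s\in\Gamma(\PP^1,\Ocal(d))$ and factoring $u=\bigotimes_{j=1}^{d} s_j$ with each $s_j\in\Gamma(\PP^1,\Ocal(1))$ defined over some finite extension $K'/K$, one has $\|s(\varphi(x))\|_{\varphi,\epsilon,v}=\prod_{j=1}^d \|s_j(x)\|_{\varphi,\epsilon,v}$ at every $v$. Taking $-\log$, weighting by $r_v$, and summing over $v\in M_K$ produces $\tilde h_\varphi(\varphi(x))=\sum_{j=1}^d \tilde h_\varphi^{(s_j)}(x)$, where $\tilde h_\varphi^{(s_j)}$ is the analogous expression with $s_j$ in place of $s$, rewritten as a sum over places of $K'$ weighted by $r_{v'}$ via the local-global degree identity $\sum_{v'\mid v}r_{v'}=r_v$. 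It remains to show that the value of $\tilde h_\varphi^{(s')}(x)$ is independent of the chosen $K'$-rational section $s'$: if $s''$ is a second such section with $x\notin\div(s'')$, then $(s''/s')(x)\in (K')^\times$, and since each metric restricts to a norm on the one-dimensional fibre, $\log(\|s''(x)\|_{\varphi,\epsilon,v}/\|s'(x)\|_{\varphi,\epsilon,v})=\log|(s''/s')(x)|_v$; summing against $r_{v'}$ vanishes by the product formula over $K'$. Hence $\tilde h_\varphi^{(s_j)}(x)=\tilde h_\varphi(x)$ for every $j$, giving $\tilde h_\varphi(\varphi(x))=d\,\tilde h_\varphi(x)$.

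The principal obstacle is this last step: the linear factors $s_j$ are in general only defined over the splitting extension $K'/K$, so one must pass carefully between sums over $M_K$ and $M_{K'}$ by means of $\sum_{v'\mid v}r_{v'}=r_v$ before invoking the product formula to establish section-independence.
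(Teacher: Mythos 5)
Your proposal is correct and follows essentially the same strategy as the paper's own proof: denote the right-hand side of $(\ref{CanonicalLocalGlobal})$ by a new symbol, verify the two characterizing properties of the Call-Silverman canonical height (boundedness of the difference from $h$ via good reduction at almost all places, and the degree-$d$ functional equation via the factorization $u=\epsilon^*\varphi^*s=\bigotimes_{j=1}^d s_j$), and then invoke the uniqueness statement. The paper is slightly terser at the bookkeeping steps — it dispatches section-independence and extension-invariance in one sentence at the outset and then invokes them implicitly in the functional-equation computation — whereas you spell out the product-formula argument for section-independence and the $\sum_{v'\mid v}r_{v'}=r_v$ passage between $M_K$ and $M_{K'}$ explicitly; this is a difference of exposition rather than of method, and both are correct.
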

\begin{proof}
Denote by $h^*_\varphi(x)$ the right-hand-side of $(\ref{CanonicalLocalGlobal})$. The product formula shows that $h^*_\varphi(x)$ is independent of the section $s\in\Gamma(\PP^1,\Ocal(1))$, and the local-global degree formula shows that $h^*_\varphi(x)$ is invariant under replacing $K$ with a finite extension $K'/K$; therefore $(\ref{CanonicalLocalGlobal})$ defines a function $h^*_\varphi:\PP^1(\Kbar)\to\RR$.  In order to show that $h^*_\varphi=h_\varphi$, by the uniqueness of the canonical height it suffices to show that $h-h^*_\varphi$ is bounded on $\PP^1(\Kbar)$, and that $h_\varphi(\varphi(x))=dh_\varphi(x)$ for all $x\in \PP^1(\Kbar)$.

Let $S$ be a finite set of places of $K$ such that the pair $(\varphi,\epsilon)$ has good reduction at all places $v\not\in S$.  It  follows from Proposition~\ref{GoodReductionProp} that $\|\cdot\|_{\varphi,\epsilon,v}$ is equal to the standard metric $\|\cdot\|_{\st,v}$ for all $v\not\in S$.  We have
\begin{equation*}
|h(x)-h^*_\varphi(x)| = \bigg|\sum_{v\in S}r_v\log\frac{\|s(x)\|_{\varphi,\epsilon,v}}{\|s(x)\|_{\st,v}}\bigg| \leq \sum_{v\in S}r_vC_v,
\end{equation*}
where for each place $v$, $C_v=\sup|\log(\|s(x)\|_{\varphi,\epsilon,v}/\|s(x)\|_{\st,v})|$.  Note that if $K$ is replaced by a finite extension $K'/K$, and if we denote by $S'$ the set of places of $K'$ lying over $S$, then the local-global degree formula implies that $\sum_{v'\in S'}r_{v'}C_{v'}=\sum_{v\in S}r_vC_v$.  This shows that $h-h^*_\varphi$ is uniformly bounded on $\PP^1(\Kbar)$.

Now define $u=\epsilon^*\varphi^*s\in\Gamma(\PP^1,\Ocal(d))$.  Extending $K$ if necessary we can assume $u$ factors as $u=\otimes_{j=1}^{d}s_j$ for sections $s_j\in\Gamma(\PP^1,\Ocal(1))$ defined over $K$.  Using the identity $(\ref{CanMetricIdentityExp})$ at each place $v\in M_K$, we have
\begin{equation*}
h^*_\varphi(\varphi(x)) = \sum_{v\in M_K}r_v\log\|s(\varphi(x))\|^{-1}_{\varphi,\epsilon,v} = \sum_{j=1}^{d}\sum_{v\in M_K}r_v\log\|s_j(x)\|^{-1}_{\varphi,\epsilon,v} = dh^*_\varphi(x),
\end{equation*}
for each $x\in \PP^1(K)\setminus\{\div(s_1),\dots,\div(s_d)\}$.  Since the section $s$ is arbitrary, we conclude that $h_\varphi(\varphi(x))=dh_\varphi(x)$ for all $x\in \PP^1(\Kbar)$ as desired, completing the proof.
\end{proof}

\subsection{The global Arakelov-Zhang pairing}\label{GlobalAZPairingSect}

Let $\varphi:\PP^1\to\PP^1$ and $\psi:\PP^1\to\PP^1$ be two morpshisms of degree at least two defined over $K$.  The global Arakelov-Zhang pairing $\langle\varphi,\psi\rangle$ is defined as follows: select two sections $s,t\in\Gamma(\PP^1,\Ocal(1))$ defined over $K$, with $\div(s)\neq\div(t)$, and let
\begin{equation}\label{GlobalAZPairing}
\langle\varphi,\psi\rangle=\sum_{v\in M_K}r_v\langle\varphi,\psi\rangle_{s,t,v}+h_\varphi(\div(t))+h_\psi(\div(s)).
\end{equation}
It follows from $(\ref{SectionChange1})$, $(\ref{SectionChange2})$, and the product formula that the value of $(\ref{GlobalAZPairing})$ does not depend on the sections $s$ and $t$, and it follows from the local-global degree formula that the value of $(\ref{GlobalAZPairing})$ is invariant under replacing $K$ with a finite extension $K'/K$.  If $\epsilon_\varphi$ and $\epsilon_\psi$ are $K$-polarizations of $\varphi$ and $\psi$, respectively, then we have the alternate formulas
\begin{equation}\label{GlobalAZPairingAlt}
\begin{split}
\langle\varphi,\psi\rangle & = \sum_{v\in M_K}r_v\Big\{\langle\varphi,\psi\rangle_{s,t,v}-\log\|s(\div(t))\|_{\varphi,\epsilon_\varphi,v}-\log\|t(\div(s))\|_{\psi,\epsilon_\psi,v}\Big\} \\
	& = \sum_{v\in M_K}r_v\Big\{- \int\log\|s(x)\|_{\varphi,\epsilon_\varphi,v} d\mu_{\psi,v}(x)-\log\|t(\div(s))\|_{\psi,\epsilon_\psi,v}\Big\}.
\end{split}
\end{equation}
for $\langle\varphi,\psi\rangle$; the first of these follows by combining $(\ref{CanonicalLocalGlobal})$ and $(\ref{GlobalAZPairing})$, and the second follows from $(\ref{LocalAZPairing})$.

The following two results show that the Arakelov-Zhang pairing is symmetric and vanishes on the diagonal.

\begin{prop}\label{SymProp}
$\langle\varphi,\psi\rangle=\langle\psi,\varphi\rangle$.
\end{prop}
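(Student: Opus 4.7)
The proof should be nearly immediate from the local symmetry property $(\ref{LocalSymmetry})$, combined with the fact that the two correction terms $h_\varphi(\div(t))+h_\psi(\div(s))$ appearing in $(\ref{GlobalAZPairing})$ are themselves symmetric in the roles of $(\varphi,s)$ and $(\psi,t)$. The plan is to compute $\langle\varphi,\psi\rangle$ using any two sections $s,t\in\Gamma(\PP^1,\Ocal(1))$ with $\div(s)\neq\div(t)$, and then recompute $\langle\psi,\varphi\rangle$ using the same two sections with their roles swapped (so $t$ plays the role of the first section and $s$ the role of the second).

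Concretely, I would first invoke the well-definedness of $\langle\cdot,\cdot\rangle$ proved right after the definition in $\S$\ref{GlobalAZPairingSect} to fix a convenient pair of sections $s,t$ with $\div(s)\neq\div(t)$. By the definition $(\ref{GlobalAZPairing})$ applied to $\langle\psi,\varphi\rangle$ with the section $t$ in the first slot and $s$ in the second slot, we have
\begin{equation*}
\langle\psi,\varphi\rangle=\sum_{v\in M_K}r_v\langle\psi,\varphi\rangle_{t,s,v}+h_\psi(\div(s))+h_\varphi(\div(t)).
\end{equation*}
Apply the local symmetry $(\ref{LocalSymmetry})$ place-by-place to rewrite $\langle\psi,\varphi\rangle_{t,s,v}=\langle\varphi,\psi\rangle_{s,t,v}$ for every $v\in M_K$, and note that the sum $h_\psi(\div(s))+h_\varphi(\div(t))$ equals $h_\varphi(\div(t))+h_\psi(\div(s))$. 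Comparison with $(\ref{GlobalAZPairing})$ for $\langle\varphi,\psi\rangle$ then yields the desired equality.

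There is no real obstacle here: the only nontrivial input is the local self-adjointness of the measure-valued Laplacian, which was used earlier to prove $(\ref{LocalSymmetry})$. The one subtlety worth mentioning is that the definition $(\ref{GlobalAZPairing})$ is asymmetric in appearance because it requires one to choose an ordered pair of sections, so the argument must be phrased as choosing the opposite ordering when evaluating $\langle\psi,\varphi\rangle$. The independence of $(\ref{GlobalAZPairing})$ from the choice of sections, established via $(\ref{SectionChange1})$ and $(\ref{SectionChange2})$ together with the product formula, guarantees that this ordering swap is legitimate.
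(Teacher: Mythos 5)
Your proposal is correct and takes essentially the same approach as the paper, which simply remarks that the result "follows at once from the definition $(\ref{GlobalAZPairing})$ and the symmetry $(\ref{LocalSymmetry})$ of the local Arakelov-Zhang pairing." You have merely spelled out the same short argument in more detail, including the (correct) observation that the section-independence of the global pairing licenses swapping the roles of $s$ and $t$.
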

\begin{proof}
This follows at once from the definition $(\ref{GlobalAZPairing})$ and the symmetry $(\ref{LocalSymmetry})$ of the local Arakelov-Zhang pairing.
\end{proof}

\begin{prop}\label{DiagonalGlobal}
$\langle\varphi,\varphi\rangle=0$.
\end{prop}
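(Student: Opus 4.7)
My plan is to reduce the problem to showing the identity
\begin{equation*}
J(s) := \sum_{v\in M_K} r_v \int \log\|s(x)\|_{\varphi,\epsilon,v}\, d\mu_{\varphi,v}(x) = h_\varphi(\div(s))
\end{equation*}
for every section $s \in \Gamma(\PP^1,\Ocal(1))$ (defined, after extending $K$ if necessary, over the relevant field). Setting $\psi = \varphi$ in the second formula of $(\ref{GlobalAZPairingAlt})$ and using $(\ref{CanonicalLocalGlobal})$ to rewrite $-\sum_v r_v \log\|t(\div(s))\|_{\varphi,\epsilon,v}$ as $h_\varphi(\div(s))$ collapses the pairing to $\langle\varphi,\varphi\rangle = h_\varphi(\div(s)) - J(s)$, making the proposition equivalent to the identity above.

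Next I would show that $D(s) := J(s) - h_\varphi(\div(s))$ is independent of $s$. Setting $f_s := -\log\|s\|_{\varphi,\epsilon,v}$, identity $(\ref{CanMeasureDef})$ gives $\Delta f_s = \delta_{\div(s)} - \mu_{\varphi,v}$. Applying the self-adjointness of $\Delta$ to the pair $(f_s, f_{s'})$ yields the local identity
\begin{equation*}
\int \log\|s\|_{\varphi,\epsilon,v}\, d\mu_{\varphi,v} - \log\|s(\div(s'))\|_{\varphi,\epsilon,v} = \int \log\|s'\|_{\varphi,\epsilon,v}\, d\mu_{\varphi,v} - \log\|s'(\div(s))\|_{\varphi,\epsilon,v}.
\end{equation*}
Weighting by $r_v$, summing over $v \in M_K$, and applying $(\ref{CanonicalLocalGlobal})$ to each point-evaluation term rearranges to $D(s) = D(s')$, whence $D \equiv C$ is a constant depending only on $\varphi$.

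Finally, I would pin down $C = 0$ using the shared dynamical self-consistency of $J$ and $h_\varphi$. Factoring the pulled-back section $u = \epsilon^*\varphi^*s$ over $\CC_v$ at each place as $u = \otimes_{j=1}^d s_j$, the invariance $(\ref{CanMeasureInvariant})$ combined with $(\ref{CanMetricIdentityExp})$ gives $J(s) = \sum_{j=1}^d J(s_j)$; meanwhile, summing the functional equation $h_\varphi(\varphi(y)) = d\cdot h_\varphi(y)$ over the $d$ preimages $y \in \varphi^{-1}(\div(s))$ yields $h_\varphi(\div(s)) = \sum_{j=1}^d h_\varphi(\div(s_j))$. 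Subtracting gives $C = D(s) = \sum_{j=1}^d D(s_j) = dC$, so $(d-1)C = 0$ and hence $C = 0$ since $d \geq 2$. This proves $J(s) = h_\varphi(\div(s))$ and therefore $\langle\varphi,\varphi\rangle = 0$.

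The main obstacle will be verifying the technical hypotheses for self-adjointness of $\Delta$ at non-archimedean places. The functions $f_s$ and $f_{s'}$ each have a single logarithmic singularity (at $\div(s)$ and $\div(s')$ respectively) and are otherwise bounded and continuous on $\Psf^1_v$, so their integrability against each other's Laplacian measures should follow from the framework of Baker-Rumely \cite{BakerRumelyBook} summarized in $\S$\ref{BerkovichSect}. Apart from this, the argument is a direct manipulation of formulas already established in $\S$\ref{LocalSection} and the preceding part of Section~4; indeed it parallels quite closely the constant-elimination argument $c = dc$ used inside the proof of Proposition~\ref{GoodReductionAZProp}.
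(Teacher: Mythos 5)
Your argument is correct and is essentially the paper's own: both proofs hinge on the same self-similarity identity, obtained by combining the $\mu_\varphi$-invariance $(\ref{CanMeasureInvariant})$ with the canonical-metric factorization $(\ref{CanMetricIdentityExp})$ to produce a $d$-fold sum, and then eliminating the constant via $C=dC$ with $d\geq 2$. The one organizational difference is your step showing $D(s)$ is independent of $s$: since $D(s)=-\langle\varphi,\varphi\rangle$ and the paper has already established (just after $(\ref{GlobalAZPairing})$, via $(\ref{SectionChange1})$, $(\ref{SectionChange2})$, and the product formula) that the global pairing is section-independent, that step is redundant and can be dropped — which also dispenses with the technical worry you raise about self-adjointness hypotheses, since you then never need to invoke self-adjointness.
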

\begin{proof}
Let $d=\deg(\varphi)$, and let $\epsilon$ be a $K$-polarization of $\varphi$.  Let $s\in\Gamma(\PP^1,\Ocal(1))$ be a section defined over $K$, and let $u=\epsilon^*\varphi^*s\in\Gamma(\PP^1,\Ocal(d))$.  Extending $K$ if necessary we may assume that $u$ factors as $u=\otimes_{j=1}^{d}s_j$ for sections $s_j\in\Gamma(\PP^1,\Ocal(1))$ defined over $K$.  Let $t\in\Gamma(\PP^1,\Ocal(1))$ be another section defined over $K$ and assume that $\div(s)\neq\div(t)$ and $\varphi(\div(s))\neq\div(t)$.

For each place $v$ of $K$ we have
\begin{equation*}
\langle\varphi,\varphi\rangle_{s,t,v} - \log\|s(\div(t))\|_{\varphi,\epsilon,v} = \sum_{j=1}^{d}\{\langle\varphi,\varphi\rangle_{s_j,t,v} - \log\|s_j(\div(t))\|_{\varphi,\epsilon,v}\}.
\end{equation*}
This follows at once from the invariance property $(\ref{CanMeasureInvariant})$ applied to the function $f(x)=\log\|s(x)\|_{\varphi,\epsilon,v}$, along with the definition $(\ref{LocalAZPairing})$ of the local Arakelov-Zhang pairing.  Summing over all places we have
\begin{equation*}
\begin{split}
\langle\varphi,\varphi\rangle -h_\varphi(\div(s)) & = \sum_{v\in M_K}r_v\langle\varphi,\varphi\rangle_{s,t,v}+h_\varphi(\div(t)) \\
	& = \sum_{v\in M_K}r_v\Big\{\langle\varphi,\varphi\rangle_{s,t,v}- \log\|s(\div(t))\|_{\varphi,\epsilon,v}\Big\} \\
	& = \sum_{j=1}^{d}\sum_{v\in M_K}r_v\Big\{\langle\varphi,\varphi\rangle_{s_j,t,v}- \log\|s_j(\div(t))\|_{\varphi,\epsilon,v}\Big\} \\
	& = \sum_{j=1}^{d}\Big\{\sum_{v\in M_K}r_v\langle\varphi,\varphi\rangle_{s_j,t,v}+h_\varphi(\div(t))\Big\} \\
	& = d\langle\varphi,\varphi\rangle -\sum_{j=1}^{d}h_\varphi(\div(s_j)).
\end{split}
\end{equation*}
Since $h_\varphi(\div(s))=h_\varphi(\varphi(\div(s_j)))=dh_\varphi(\div(s_j))$ for each $j$, we deduce that $\langle\varphi,\varphi\rangle=d\langle\varphi,\varphi\rangle$, whereby $\langle\varphi,\varphi\rangle=0$.
\end{proof}

\subsection{Equidistribution}  Let $\varphi:\PP^1\to\PP^1$ and $\psi:\PP^1\to\PP^1$ be two rational maps, each of degree at least two, defined over $K$.  In this section we will use dynamical equidistribution results to give a useful alternative expression for the global Arakelov-Zhang pairing $\langle\varphi,\psi\rangle$.

Let $\{Z_n\}$ be a sequence of finite $\Gal(\Kbar/K)$-invariant multisets in $\PP^1(\Kbar)$.  We say the sequence $\{Z_n\}$ is $h_\psi$-small if
\begin{equation*}
\frac{1}{|Z_n|}\sum_{x\in Z_n}h_\psi(x) \to 0.
\end{equation*}
Throughout this paper, sums over $x$ in a multiset $Z$, and its cardinality $|Z|$, are computed according to multiplicity.

Let $\mu_\psi=(\mu_{\psi,v})$ be the collection, indexed by the set of places $v$ of $K$, of canonical measures on $\Psf^1_v$ with respect to $\psi$.  We say a sequence $\{Z_n\}$ of $\Gal(\Kbar/K)$-invariant multisets in $\PP^1(\Kbar)$ is $\mu_\psi$-equidistributed if
\begin{equation}\label{EquidistDef}
\frac{1}{|Z_n|}\sum_{x\in Z_n}f(x) \to \int_{\Psf_v^1}f(x)d\mu_{\psi,v}(x)
\end{equation}
for each place $v\in M_K$ and each continuous function $f:\Psf^1_v\to\RR$.  In the left-hand-side of $(\ref{EquidistDef})$, the function $f:\Psf^1_v\to\RR$ is evaluated at each $x\in Z_n$ via some fixed embedding $\Kbar\hookrightarrow\CC_v$; since each multiset $Z_n$ is $\Gal(\Kbar/K)$-invariant, the value of the left-hand-side of $(\ref{EquidistDef})$ is independent of the choice of embedding.

\begin{ex}\label{GaloisOrbitsEx}
Let $\{x_n\}$ be a sequence of distinct points in $\PP^1(\Kbar)$ such that $h_\psi(x_n)\to0$, and for each $n$ let $Z_n$ be the set of $\Gal(\Kbar/K)$ conjugates of $x_n$ in $\PP^1(\Kbar)$.  Since $h_\psi$ is $\Gal(\Kbar/K)$-invariant, each point $x\in Z_n$ has $\psi$-canonical height $h_\psi(x)=h_\psi(x_n)$.  In particular, the sequence $\{Z_n\}$ is $h_\psi$-small.  The dynamical equidistribution theorem for Galois orbits, proved independently by Baker-Rumely \cite{BakerRumely}, Chambert-Loir \cite{ChambertLoir}, and Favre-Rivera-Letelier \cite{FavreRiveraLetelier}, states that $\{Z_n\}$ is $\mu_\psi$-equidistributed as well.
\end{ex}

\begin{ex}\label{PeriodicPointsEx}
Let $Z_n$ be the multiset of $\psi$-periodic points in $\PP^1(\Kbar)$ of period $n$; that is, the points $x$ satisfying $\psi^n(x)=x$.  Then $|Z_n|=\deg(\psi)^n+1$, and since all $\psi$-periodic points have $\psi$-canonical height zero, the sequence $\{Z_n\}$ is $h_\psi$-small.  Moreover, the sequence $\{Z_n\}$ is $\mu_\psi$-equidistributed, as shown by Ljubich \cite{Ljubich} at the archimedean places and by Favre-Rivera-Letelier \cite{FavreRiveraLetelier2} at the non-archimedean places.
\end{ex}

\begin{thm}\label{MainTheorem}
Let $\{Z_n\}$ be a sequence of finite $\Gal(\Kbar/K)$-invariant multisets in $\PP^1(\Kbar)$. If $\{Z_n\}$ is $h_\psi$-small and $\mu_\psi$-equidistributed, then
\begin{equation}\label{MainTheoremIdentity}
\frac{1}{|Z_n|}\sum_{x\in Z_n}h_\varphi(x) \to \langle\varphi,\psi\rangle.
\end{equation}
\end{thm}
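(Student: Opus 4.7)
The plan is to exploit a cancellation of local singularities that converts the problem into a direct application of the equidistribution hypothesis against a continuous test function. Fix once and for all a section $s\in\Gamma(\PP^1,\Ocal(1))$ defined over $K$, together with $K$-polarizations $\epsilon_\varphi$ and $\epsilon_\psi$.

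First, I would subtract off $h_\psi$. Applying the local-global formula $(\ref{CanonicalLocalGlobal})$ to both canonical heights gives
\begin{equation*}
h_\varphi(x)-h_\psi(x)=\sum_{v\in M_K}r_v\,f_v(x),\qquad f_v(x):=\log\frac{\|s(x)\|_{\psi,\epsilon_\psi,v}}{\|s(x)\|_{\varphi,\epsilon_\varphi,v}},
\end{equation*}
for all $x\in\PP^1(\Kbar)\setminus\{\div(s)\}$. Each $f_v$ is a difference
\begin{equation*}
f_v(x)=\log\bigl(\|s(x)\|_{\psi,\epsilon_\psi,v}/\|s(x)\|_{\st,v}\bigr)-\log\bigl(\|s(x)\|_{\varphi,\epsilon_\varphi,v}/\|s(x)\|_{\st,v}\bigr)
\end{equation*}
of two bounded continuous functions on $\Psf_v^1$ (by the defining properties of canonical metrics in $\S$~\ref{CanMetricSect}): the simple log singularities at $\div(s)$ cancel, so $f_v$ is bounded and continuous on all of $\Psf_v^1$. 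Moreover, by Proposition~\ref{GoodReductionProp} we have $f_v\equiv 0$ at every place $v$ at which both pairs $(\varphi,\epsilon_\varphi)$ and $(\psi,\epsilon_\psi)$ have good reduction, so $f_v$ is nonzero only for $v$ in some finite set $S\subset M_K$.

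Second, I would apply the $\mu_\psi$-equidistribution hypothesis to each continuous function $f_v$, $v\in S$, and pass the finite summation through the limit to obtain
\begin{equation*}
\frac{1}{|Z_n|}\sum_{x\in Z_n}(h_\varphi(x)-h_\psi(x))\;\longrightarrow\;\sum_{v\in M_K}r_v\int_{\Psf_v^1}f_v\,d\mu_{\psi,v}.
\end{equation*}
Combining with the $h_\psi$-smallness hypothesis then gives
\begin{equation*}
\frac{1}{|Z_n|}\sum_{x\in Z_n}h_\varphi(x)\;\longrightarrow\;\sum_{v\in M_K}r_v\int_{\Psf_v^1}f_v\,d\mu_{\psi,v}.
\end{equation*}
To identify the right-hand side with $\langle\varphi,\psi\rangle$, split
\begin{equation*}
\sum_{v}r_v\!\int\! f_v\,d\mu_{\psi,v}=\sum_{v}r_v\!\int\!\log\|s\|_{\psi,\epsilon_\psi,v}\,d\mu_{\psi,v}-\sum_{v}r_v\!\int\!\log\|s\|_{\varphi,\epsilon_\varphi,v}\,d\mu_{\psi,v}.
\end{equation*}
Formula $(\ref{GlobalAZPairingAlt})$ applied to $\langle\varphi,\psi\rangle$ expresses the second sum as $h_\psi(\div(s))-\langle\varphi,\psi\rangle$, while the same formula with $\varphi$ replaced by $\psi$, combined with Proposition~\ref{DiagonalGlobal} ($\langle\psi,\psi\rangle=0$), expresses the first sum as $h_\psi(\div(s))$. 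Subtraction yields $\langle\varphi,\psi\rangle$, completing the argument.

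The main obstacle is the continuity and boundedness claim in the first step --- that the two $v$-adic logarithmic singularities of the canonical-metric norms at $\div(s)$ cancel to produce a single bounded continuous function on the compact Berkovich line $\Psf_v^1$. Once that is secured, the $\mu_\psi$-equidistribution hypothesis (which is stated only for continuous test functions) applies directly, with no truncation of log-singular integrands required; the remaining algebra is merely a rearrangement of already-established global identities.
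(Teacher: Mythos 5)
Your proposal is correct and follows essentially the same strategy as the paper's proof: you define exactly the same function $f_v = \log(\|s\|_{\psi,\epsilon_\psi,v}/\|s\|_{\varphi,\epsilon_\varphi,v})$, observe that it extends to a bounded continuous function on $\Psf_v^1$ and vanishes outside a finite set of places $S$ (via Proposition~\ref{GoodReductionProp}), interchange the finite sum over $S$ with the limit, apply $\mu_\psi$-equidistribution to each $f_v$, and then identify the resulting sum of integrals with $\langle\varphi,\psi\rangle - \langle\psi,\psi\rangle = \langle\varphi,\psi\rangle$. The only cosmetic difference is in the final identification step: the paper expands $\int f_v\,d\mu_{\psi,v}$ in terms of the local pairings $\langle\varphi,\psi\rangle_{s,t,v}-\langle\psi,\psi\rangle_{s,t,v}+f_v(\div(t))$ and re-assembles, whereas you split the integral into two pieces and invoke the global formula $(\ref{GlobalAZPairingAlt})$ directly (once for $\langle\varphi,\psi\rangle$ and once for $\langle\psi,\psi\rangle$), thereby avoiding the auxiliary section $t$ at the expense of relying on the finiteness of each singular integral $\int \log\|s\|_{\varphi,\epsilon_\varphi,v}\,d\mu_{\psi,v}$ individually — a fact already embedded in $(\ref{GlobalAZPairingAlt})$. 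Both routes are sound and rest on the same prior results.
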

\begin{proof}
Let $\epsilon_\varphi$ and $\epsilon_\psi$ be $K$-polarizations of $\varphi$ and $\psi$, respectively, and let $s,t\in\Gamma(\PP^1,\Ocal(1))$ be sections defined over $K$ with $\div(s)\neq\div(t)$.  By Propositions~\ref{GoodReductionProp} and \ref{GoodReductionAZProp} there exists a finite set $S$ of places, including all of the archimedean places, such that the following conditions hold for all places $v\not\in S$:
\begin{itemize}
	\item $\|\cdot\|_{\varphi,\epsilon_\varphi,v}=\|\cdot\|_{\psi,\epsilon_\psi,v}=\|\cdot\|_{\st,v}$;
	\item $\langle\varphi,\psi\rangle_{s,t,v} = \langle\psi,\psi\rangle_{s,t,v}=0$.
\end{itemize}
For each place $v$ of $K$ and each $x\in \PP^1(\KK)\setminus\{\div(s)\}$, define
\begin{equation*}
f_v(x)=\log\frac{\|s(x)\|_{\psi,\epsilon_\psi,v}}{\|s(x)\|_{\varphi,\epsilon_\varphi,v}}.
\end{equation*}
Note that $f_v$ extends to a continuous function $f_v:\Psf^1_v\to\RR$, and that if $v\not\in S$ then $f_v$ is identically zero.

We have
\begin{equation}\label{MainTheoremIdentity2}
\begin{split}
\lim_{n\to+\infty}\frac{1}{|Z_n|}\sum_{x\in Z_n}(h_\varphi(x)-h_\psi(x)) & = \lim_{n\to+\infty}\frac{1}{|Z_n|}\sum_{x\in Z_n}\sum_{v\in S}r_vf_v(x) \\
	& = \sum_{v\in S}r_v\lim_{n\to+\infty}\frac{1}{|Z_n|}\sum_{x\in Z_n}f_v(x) \\
	& = \sum_{v\in S}r_v\int_{\Psf_v^1}f_v(x)d\mu_{\psi,v}(x) \\
	& = \sum_{v\in S}r_v\Big\{ \langle\varphi,\psi\rangle_{s,t,v} - \langle\psi,\psi\rangle_{s,t,v}+f_v(\div(t)) \Big\} \\
	& = \langle\varphi,\psi\rangle-\langle\psi,\psi\rangle.
\end{split}
\end{equation}
In the preceding calculation, the first equality uses the formula $(\ref{CanonicalLocalGlobal})$ and the fact that $f_v$ vanishes identically when $v\not\in S$.  The second equality interchanges the limit with the sum over places $v\in S$; this is justified because the finite set $S$ is independent of the parameter $n$.  The third equality uses the fact that $\{Z_n\}$ is $\mu_\psi$-equidistributed.  The fourth equality uses the definition $(\ref{LocalAZPairing})$ of the local Arakelov-Zhang pairing.  The final equality follows from $(\ref{GlobalAZPairingAlt})$ and the fact that the the summand vanishes at all places $v\not\in S$.

Since the sequence $\{Z_n\}$ is $h_\psi$-small and $\langle\psi,\psi\rangle=0$, the calculation $(\ref{MainTheoremIdentity2})$ implies $(\ref{MainTheoremIdentity})$.
\end{proof}

\begin{cor}\label{EquidistCor1}
Let $\{x_n\}$ be a sequence of distinct points in $\PP^1(\Kbar)$ such that $h_\psi(x_n)\to0$.  Then $h_\varphi(x_n)\to\langle\varphi,\psi\rangle$.
\end{cor}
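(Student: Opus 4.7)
The plan is to reduce the corollary to Theorem~\ref{MainTheorem} by packaging the individual points $x_n$ into their full Galois orbits, as foreshadowed in Example~\ref{GaloisOrbitsEx}. For each $n$, let $Z_n\subset\PP^1(\Kbar)$ be the set of $\Gal(\Kbar/K)$-conjugates of $x_n$. Because both $h_\varphi$ and $h_\psi$ are $\Gal(\Kbar/K)$-invariant, every point $x\in Z_n$ satisfies $h_\psi(x)=h_\psi(x_n)$ and $h_\varphi(x)=h_\varphi(x_n)$. In particular the averages over $Z_n$ collapse to the single values $h_\psi(x_n)$ and $h_\varphi(x_n)$, so that the hypothesis $h_\psi(x_n)\to0$ immediately gives that $\{Z_n\}$ is $h_\psi$-small.

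Next, I would invoke the equidistribution theorem for Galois orbits of dynamically small points of Baker--Rumely, Chambert-Loir, and Favre--Rivera-Letelier, as recorded in Example~\ref{GaloisOrbitsEx}. Together with the distinctness of the $x_n$ and the vanishing $h_\psi(x_n)\to 0$, this theorem asserts exactly that $\{Z_n\}$ is $\mu_\psi$-equidistributed. (Implicit here is the fact that $|Z_n|\to\infty$: if the degrees $[K(x_n):K]$ were bounded, a Northcott-type finiteness argument applied to $h_\psi$ would force the $x_n$ to eventually land in the finite set of $\psi$-preperiodic points of bounded degree, contradicting distinctness.)

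With both hypotheses of Theorem~\ref{MainTheorem} verified, applying that theorem to $\{Z_n\}$ yields
\begin{equation*}
\frac{1}{|Z_n|}\sum_{x\in Z_n}h_\varphi(x)\longrightarrow \langle\varphi,\psi\rangle.
\end{equation*}
By $\Gal(\Kbar/K)$-invariance of $h_\varphi$, the left-hand side equals $h_\varphi(x_n)$ for every $n$, so $h_\varphi(x_n)\to\langle\varphi,\psi\rangle$, as desired.

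There is essentially no obstacle here beyond setting up the correspondence between a sequence of individual small points and the associated sequence of Galois-orbit multisets; the substantive analytic content, namely the local equidistribution at every place packaged into Theorem~\ref{MainTheorem}, has already been established. The only care needed is to justify $|Z_n|\to\infty$ so that equidistribution applies in the intended sense, and this is handled by the Northcott observation above.
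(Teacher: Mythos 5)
Your proof is correct and follows essentially the same argument as the paper: pass to the Galois orbits $Z_n$, invoke Example~\ref{GaloisOrbitsEx} to get $h_\psi$-smallness and $\mu_\psi$-equidistribution, apply Theorem~\ref{MainTheorem}, and use $\Gal(\Kbar/K)$-invariance of $h_\varphi$ to collapse the average back to $h_\varphi(x_n)$. The extra aside about $|Z_n|\to\infty$ via a Northcott argument is a harmless but unneeded clarification, since the equidistribution statement cited is already formulated for sequences of distinct small points.
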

\begin{proof}
Letting $Z_n$ denote the set of $\Gal(\Kbar/K)$-conjugates of $x_n$, as explained in Example~\ref{GaloisOrbitsEx} the sequence $\{Z_n\}$ is $h_\psi$-small and $\mu_\psi$-equidistributed.  Since $h_\varphi$ is $\Gal(\Kbar/K)$-invariant, by Theorem~\ref{MainTheorem} we have $h_\varphi(x_n)=\frac{1}{|Z_n|}\sum_{x\in Z_n}h_\varphi(x)\to\langle\varphi,\psi\rangle$.
\end{proof}

\begin{cor}\label{EquidistCor2}
The limit in $(\ref{SzpiroTuckerPairing})$ exists and $\Sigma(\varphi,\psi)=\langle\varphi,\psi\rangle$.
\end{cor}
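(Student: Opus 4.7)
The plan is to apply Theorem~\ref{MainTheorem} directly to the sequence of multisets of $\psi$-periodic points, which is essentially the content of Example~\ref{PeriodicPointsEx}.

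First, for each $n \geq 1$ I would let $Z_n \subset \PP^1(\Kbar)$ denote the multiset of solutions of $\psi^n(x) = x$, counted with multiplicity. Since $\psi^n$ is a rational map of degree $\deg(\psi)^n$ on $\PP^1$, the fixed point equation is (after clearing denominators) a polynomial equation of degree $\deg(\psi)^n + 1$, so $|Z_n| = \deg(\psi)^n + 1$. The set $Z_n$ is $\Gal(\Kbar/K)$-invariant because $\psi$ is defined over $K$, so the Galois action permutes solutions of $\psi^n(x) = x$.

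Next I would verify the two hypotheses of Theorem~\ref{MainTheorem} for this sequence. Since every point $x \in Z_n$ satisfies $\psi^n(x) = x$, it is $\psi$-periodic and hence $\psi$-preperiodic, so $h_\psi(x) = 0$ for all $x \in Z_n$. This immediately gives $\frac{1}{|Z_n|} \sum_{x \in Z_n} h_\psi(x) = 0$, so $\{Z_n\}$ is $h_\psi$-small (trivially, not just in the limit). For the $\mu_\psi$-equidistribution of $\{Z_n\}$, I would cite Ljubich \cite{Ljubich} at the archimedean places and Favre-Rivera-Letelier \cite{FavreRiveraLetelier2} at the non-archimedean places, exactly as recorded in Example~\ref{PeriodicPointsEx}; both hypotheses of Theorem~\ref{MainTheorem} hold.

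Finally, Theorem~\ref{MainTheorem} then asserts that
\begin{equation*}
\frac{1}{\deg(\psi)^n + 1} \sum_{\psi^n(x) = x} h_\varphi(x) \;=\; \frac{1}{|Z_n|} \sum_{x \in Z_n} h_\varphi(x) \;\longrightarrow\; \langle\varphi,\psi\rangle
\end{equation*}
as $n \to \infty$. This is exactly the statement that the limit defining $\Sigma(\varphi,\psi)$ in $(\ref{SzpiroTuckerPairing})$ exists and equals $\langle\varphi,\psi\rangle$. There is no real obstacle here: all the deep input (local analysis on $\Psf^1_v$, canonical measures, the dynamical equidistribution of periodic points) has already been absorbed into Theorem~\ref{MainTheorem} and Example~\ref{PeriodicPointsEx}, so the corollary reduces to matching these ingredients together.
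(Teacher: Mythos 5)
Your proof is correct and matches the paper's proof exactly: both apply Theorem~\ref{MainTheorem} to the multisets $Z_n$ of $\psi$-periodic points of period $n$, invoking Example~\ref{PeriodicPointsEx} (Ljubich and Favre--Rivera-Letelier) for $\mu_\psi$-equidistribution and the vanishing of $h_\psi$ on periodic points for $h_\psi$-smallness. You spell out a few routine checks ($|Z_n| = \deg(\psi)^n+1$, Galois-invariance) that the paper leaves implicit, but the argument is the same.
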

\begin{proof}
Letting $Z_n$ be the multiset of $\psi$-periodic points in $\PP^1(\Kbar)$ of period $n$, as explained in Example~\ref{PeriodicPointsEx} the sequence $\{Z_n\}$ is $h_\psi$-small and $\mu_\psi$-equidistributed.  It follows from Theorem~\ref{MainTheorem} that $\Sigma(\varphi,\psi)=\frac{1}{|Z_n|}\sum_{x\in Z_n}h_\varphi(x)\to\langle\varphi,\psi\rangle$.
\end{proof}

\begin{cor}\label{EquidistCor3}
The Arakelov-Zhang pairing $\langle\varphi,\psi\rangle$ is nonnegative.  Moreover, the following five conditions are equivalent:
\begin{quote}
{\bf (a)}  $\langle\varphi,\psi\rangle=0$; \\
{\bf (b)}  $h_\varphi=h_\psi$; \\
{\bf (c)}  $\PrePer(\varphi)=\PrePer(\psi)$; \\
{\bf (d)}  $\PrePer(\varphi)\cap\PrePer(\psi)$ is infinite; \\
{\bf (e)}  $\liminf_{x\in\PP^1(\Kbar)}(h_\varphi(x)+h_\psi(x))=0$.
\end{quote}
\end{cor}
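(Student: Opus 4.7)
The plan is to deduce nonnegativity directly from Corollary~\ref{EquidistCor1}, and then to establish the five-way equivalence by closing the cycle (a) $\Rightarrow$ (b) $\Rightarrow$ (c) $\Rightarrow$ (d) $\Rightarrow$ (a), together with the separate pair (a) $\Leftrightarrow$ (e). For nonnegativity, one picks a sequence $\{x_n\}$ of distinct $\psi$-preperiodic points (which exists because $\PrePer(\psi)$ is infinite), notes that $h_\psi(x_n) = 0$, applies Corollary~\ref{EquidistCor1} to obtain $h_\varphi(x_n) \to \langle\varphi,\psi\rangle$, and invokes $h_\varphi \geq 0$ to conclude.

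I will dispose of the easy implications quickly. The implication (b) $\Rightarrow$ (c) is the characterization $\PrePer(\varphi) = \{h_\varphi = 0\}$, (c) $\Rightarrow$ (d) uses that $\PrePer(\varphi)$ is infinite, and (d) $\Rightarrow$ (a) follows by applying Corollary~\ref{EquidistCor1} to a distinct sequence in the infinite set $\PrePer(\varphi) \cap \PrePer(\psi)$, where both heights vanish identically. The implication (a) $\Rightarrow$ (e) reuses the sequence from the nonnegativity argument. For (e) $\Rightarrow$ (a), the $\liminf$ hypothesis supplies a distinct sequence $\{x_n\}$ with $h_\varphi(x_n) + h_\psi(x_n) \to 0$; since both heights are nonnegative each tends to zero separately, and Corollary~\ref{EquidistCor1} then gives $\langle\varphi,\psi\rangle = 0$.

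The main obstacle, and the genuinely new implication, is (a) $\Rightarrow$ (b). The strategy is first to upgrade the vanishing of the pairing to the pointwise equality $\mu_{\varphi,v} = \mu_{\psi,v}$ of canonical measures at every place $v$. Start with a sequence $\{x_n\}$ of distinct $\varphi$-preperiodic points; by symmetry (Proposition~\ref{SymProp}) we have $\langle\psi,\varphi\rangle = 0$, and Corollary~\ref{EquidistCor1} with the roles of $\varphi$ and $\psi$ swapped yields $h_\psi(x_n) \to 0$. Letting $Z_n$ denote the Galois orbit of $x_n$, the sequence $\{Z_n\}$ is simultaneously $h_\varphi$-small and $h_\psi$-small, so the dynamical equidistribution theorem (Example~\ref{GaloisOrbitsEx}) makes it simultaneously $\mu_\varphi$- and $\mu_\psi$-equidistributed, and uniqueness of weak limits forces $\mu_{\varphi,v} = \mu_{\psi,v}$ at every place.

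With the measures equal, the characterization (\ref{CanMeasureDef}) implies that for any fixed section $s$ the local function $g_v(x) = \log(\|s(x)\|_{\psi,\epsilon_\psi,v}/\|s(x)\|_{\varphi,\epsilon_\varphi,v})$ has vanishing Laplacian at each place, so part (b) of the Laplacian proposition forces each $g_v$ to equal some constant $c_v$. Summing over places through the local-global formula (\ref{CanonicalLocalGlobal}) then shows that $h_\varphi - h_\psi$ equals the absolute constant $C = \sum_{v} r_v c_v$ on $\PP^1(\Kbar)$. Evaluating at any $x \in \PrePer(\psi)$ gives $C = h_\varphi(x) \geq 0$, while evaluating at any $x \in \PrePer(\varphi)$ gives $C = -h_\psi(x) \leq 0$, so $C = 0$ and $h_\varphi = h_\psi$, which is (b). This completes the cycle and hence the proof.
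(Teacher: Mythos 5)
Your proof is correct and its core argument — producing a sequence that is small for both $h_\varphi$ and $h_\psi$, deducing $\mu_{\varphi,v}=\mu_{\psi,v}$ at every place via equidistribution, then using the Laplacian criterion and the local-global formula to get $h_\varphi=h_\psi+c$ and finally $c=0$ by evaluating at preperiodic points — is exactly the paper's. The only difference is cosmetic: you route the cycle as (a)~$\Rightarrow$~(b)~$\Rightarrow$~(c)~$\Rightarrow$~(d)~$\Rightarrow$~(a), constructing the doubly-small sequence directly from (a) via symmetry and Corollary~\ref{EquidistCor1} with swapped roles, whereas the paper proves (e)~$\Rightarrow$~(b) and chains (d)~$\Rightarrow$~(e); both reduce the hard implication to the same measure-equality argument.
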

\begin{proof}
The nonnegativity of $\langle\varphi,\psi\rangle$ follows from Corollary~\ref{EquidistCor1} and the nonnegativity of the canonical height $h_\varphi$.  It also follows immediately from Corollary~\ref{EquidistCor1} that {\bf (a)} and {\bf (e)} are equivalent.  To complete the proof we will show that {\bf (b)} $\Rightarrow$ {\bf (c)} $\Rightarrow$ {\bf (d)} $\Rightarrow$ {\bf (e)} $\Rightarrow$ {\bf (b)}.  Since $h_\varphi$ vanishes precisely on $\PrePer(\varphi)$, and similarly for $\psi$, it follows that {\bf (b)} $\Rightarrow$ {\bf (c)}.  Since $\PrePer(\varphi)$ is infinite, it follows that {\bf (c)} $\Rightarrow$ {\bf (d)}.  Since preperiodic points have height zero, it follows that {\bf (d)} $\Rightarrow$ {\bf (e)}.

It remains only to show that {\bf (e)} $\Rightarrow$ {\bf (b)}.  The condition {\bf (e)} means that there exists a sequence $\{x_n\}$ of distinct points in $\PP^1(\Kbar)$ such that $h_\varphi(x_n)\to0$ and $h_\psi(x_n)\to0$.  Letting $Z_n$ denote the set of $\Gal(\Kbar/K)$-conjugates of $x_n$, since $h_\varphi(x_n)\to0$ and $h_\psi(x_n)\to0$ it follows from Example~\ref{GaloisOrbitsEx} that $\{Z_n\}$ is both $\mu_\varphi$-equidistributed and $\mu_\psi$-equidistributed.  From the definition $(\ref{EquidistDef})$ it follows that $\int_{\Psf_v^1}f(x)d\mu_{\varphi,v}(x)=\int_{\Psf_v^1}f(x)d\mu_{\psi,v}(x)$ for each place $v$ of $K$ and each continuous function $f:\Psf^1_v\to\RR$.  This means that $\mu_{\varphi,v}=\mu_{\psi,v}$ as measures on $\Psf^1_v$.

Now let $\epsilon_\varphi$ and $\epsilon_\psi$ be $K$-polarizations of $\varphi$ and $\psi$, respectively, and let $s\in\Gamma(\PP^1,\Ocal(1))$ be a section defined over $K$.  By $(\ref{CanMeasureDef})$ and the equality of measures $\mu_{\varphi,v}=\mu_{\psi,v}$, along with the fact that $\Delta$ vanishes precisely on the constant functions, we deduce that
\begin{equation*}
-\log\|s(x)\|_{\varphi,\epsilon_\varphi,v}=-\log\|s(x)\|_{\psi,\epsilon_\psi,v} + c_v
\end{equation*}
for each place $v$ of $K$, where $c_v\in\RR$ is a constant which does not depend on the section $s$.  Summing over all places and using $(\ref{CanonicalLocalGlobal})$ we deduce that
\begin{equation}\label{EqualUpToConstant}
h_\varphi(x)=h_\psi(x) + c,
\end{equation}
for all $x\in \PP^1(K)\setminus\div(s)$, where $c=\sum_{v\in M_K}r_vc_v$.  Since all of the terms in $(\ref{EqualUpToConstant})$ are independent of the section $s$ and invariant under replacing $K$ with a finite extension $K'/K$, it follows that $(\ref{EqualUpToConstant})$ holds for all $x\in \PP^1(\Kbar)$.  Taking $x$ to be $\varphi$-preperiodic, we deduce that $c=-h_\psi(x)\leq0$.  On the other hand, if $x$ is $\psi$-preperiodic then $c=h_\varphi(x)\geq0$.  We conclude that $c=0$, and therefore $h_\varphi=h_\psi$ as desired.
\end{proof}


\section{A Height Difference Bound}\label{HeightDiffSection}

Let $\sigma:\PP^1\to\PP^1$ be the map defined by $\sigma(x)=x^2$, or in homogeneous coordinates $\sigma(x_0:x_1)=(x_0^2:x_1^2)$.  Let $\epsilon_\sigma:\Ocal(2)\stackrel{\sim}{\to}\sigma^*\Ocal(1)$ be the polarization which is normalized so that $\epsilon_\sigma^*\sigma^*x_j=x_j^2$.  In this case the canonical adelic metric $\|\cdot\|_{\sigma,\epsilon_\sigma}$ on $\Ocal(1)$ is the same as the standard adelic metric $\|\cdot\|_{\st}$.  It then follows from $(\ref{StandardHeightAlt})$ and $(\ref{CanonicalLocalGlobal})$ that the canonical height $h_{\sigma}$ is the same as the standard height $h_\st$.

Now consider an arbitrary rational map $\psi:\PP^1\to\PP^1$ of degree at least two.  Using Theorem~\ref{IntroThm3} and a result of Szpiro-Tucker \cite{SzpiroTucker}, we can give an upper bound on the difference between the canonical height $h_\psi$ and the standard height $h_\st$ in terms of the pairing $\langle\sigma,\psi\rangle$.

\begin{thm}\label{HeightDiffProp}
Let $\sigma:\PP^1\to\PP^1$ be the map defined by $\sigma(x)=x^2$, and let $\psi:\PP^1\to\PP^1$ be an arbitrary map of degree $d\geq2$ defined over a number field $K$.  Then
\begin{equation}\label{HeightDiffIneq}
h_\psi(x)-h_\st(x)\leq \langle\sigma,\psi\rangle + h_\psi(\infty) +\log 2
\end{equation}
for all $x\in\PP^1(\Kbar)$.
\end{thm}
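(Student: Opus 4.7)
The proof plan is to apply the Szpiro-Tucker generalized Mahler formula (from \cite{SzpiroTucker}) to convert $h_\psi(x)$ into a sum of local integrals, bound those integrals place-by-place with elementary inequalities, and then recognize what remains as the pairing $\langle\sigma,\psi\rangle$ via formula~(\ref{GlobalAZPairingAlt}).  The case $x=\infty$ follows immediately from the nonnegativity of $\langle\sigma,\psi\rangle$ (Corollary~\ref{EquidistCor3}) together with $h_\st(\infty)=0$, so I may assume $x$ corresponds to a point in affine coordinates, which by abuse of notation I also denote $x$.

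First, the Szpiro-Tucker Mahler formula applied to the rational function $T-x$, whose divisor on $\PP^1$ is $[x]-[\infty]$, yields
\begin{equation*}
h_\psi(x) - h_\psi(\infty) = \sum_{v\in M_K} r_v \int_{\Psf^1_v} \log|T-x|_v \, d\mu_{\psi,v}(T).
\end{equation*}
Next, the elementary estimate $|T-x|_v \leq 2^{\epsilon_v}\max(|T|_v,1)\max(|x|_v,1)$, where $\epsilon_v=1$ at archimedean places (from $a+b\leq 2\max(a,b)$) and $\epsilon_v=0$ at non-archimedean places (from the ultrametric inequality), gives after taking logarithms $\log|T-x|_v \leq \log^+|T|_v + \log^+|x|_v + \epsilon_v\log 2$.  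Integrating against the probability measure $\mu_{\psi,v}$, multiplying by $r_v$, summing over $v\in M_K$, and using both $\sum_{v\text{ arch}}r_v=1$ and $h_\st(x)=\sum_v r_v\log^+|x|_v$, one obtains
\begin{equation*}
h_\psi(x) - h_\psi(\infty) \leq \sum_v r_v \int \log^+|T|_v \, d\mu_{\psi,v}(T) + h_\st(x) + \log 2.
\end{equation*}

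Finally, the integral $\sum_v r_v\int\log^+|T|_v\, d\mu_{\psi,v}$ is recognized via~(\ref{GlobalAZPairingAlt}) applied to the sections $s=x_1$, $t=x_0$: the canonical metric $\|\cdot\|_{\sigma,\epsilon_\sigma}$ for the squaring map coincides with the standard metric, and $-\log\|s_1\|_\st=\log^+|\cdot|$ in affine coordinates, so the formula evaluates to
\begin{equation*}
\langle\sigma,\psi\rangle = \sum_v r_v \int \log^+|T|_v \, d\mu_{\psi,v}(T) + h_\psi(\infty).
\end{equation*}
Hence $\sum_v r_v\int\log^+|T|_v\,d\mu_{\psi,v} \leq \langle\sigma,\psi\rangle$ (since $h_\psi(\infty)\geq 0$), and substituting this into the previous inequality yields $h_\psi(x) - h_\st(x) \leq \langle\sigma,\psi\rangle + h_\psi(\infty) + \log 2$ as required.

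The principal obstacle is the first step: correctly invoking the Szpiro-Tucker Mahler formula and accurately tracking the correction $h_\psi(\infty)$ arising from the pole of $T-x$ at infinity.  Once this formula is in hand, the remainder is essentially a bookkeeping exercise.  A secondary subtlety is that $\log|T-x|_v$ and $\log^+|T|_v$ are extended-real-valued and singular on $\Psf^1_v$; their integrals against $\mu_{\psi,v}$ must be interpreted within the potential-theoretic framework of Section~\ref{LocalSection}, but the boundedness of canonical relative to standard metrics (as described at the end of $\S$~\ref{CanMetricSect}) ensures finiteness of every integral encountered.
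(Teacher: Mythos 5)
Your proof is correct, and the overall strategy coincides with the paper's: express $h_\psi(x) - h_\psi(\infty)$ via a Szpiro--Tucker Mahler-type formula, apply the elementary place-by-place inequality $\log|T-x|_v \le \log^+|T|_v + \log^+|x|_v + \epsilon_v\log 2$, and identify what remains as (a quantity close to) $\langle\sigma,\psi\rangle$. The route differs in one respect. The paper keeps the Mahler formula in its periodic-point-average form $\lim_n\frac{1}{d^n+1}\sum_{\psi^n(\alpha)=\alpha,\,\alpha\neq\infty}\log|x-\alpha|_v$ and recognizes the residual limit $\lim_n\frac{1}{d^n+1}\sum_{\psi^n(\alpha)=\alpha}h_\st(\alpha)$ as $\Sigma(\sigma,\psi)=\langle\sigma,\psi\rangle$ via Theorem~\ref{IntroThm3}; you pass directly to the integral form $\sum_v r_v\int\log|T-x|_v\,d\mu_{\psi,v}$ (which one can also verify from scratch using~(\ref{LocalAZPairing}), the symmetry~(\ref{LocalSymmetry}), and~(\ref{CanonicalLocalGlobal}) applied to the sections $x_1$ and $x_0-xx_1$) and recognize the residual $\sum_v r_v\int\log^+|T|_v\,d\mu_{\psi,v}$ as $\langle\sigma,\psi\rangle - h_\psi(\infty)$ via~(\ref{GlobalAZPairingAlt}), i.e.\ Proposition~\ref{ExampleProp}. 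Both identifications are correct, but yours is an \emph{equality}, and if you substitute it directly rather than first weakening to $\sum_v r_v\int\log^+|T|_v\,d\mu_{\psi,v}\le\langle\sigma,\psi\rangle$, the two copies of $h_\psi(\infty)$ cancel and you get the strictly sharper conclusion
\begin{equation*}
h_\psi(x)-h_\st(x)\le\langle\sigma,\psi\rangle+\log 2.
\end{equation*}
By discarding $h_\psi(\infty)\ge 0$ in your last step you give this improvement back and land on the weaker inequality stated in the theorem. The discrepancy between the two routes is explained by the fact that $\log^+|T|_v$ is unbounded at $\infty$, so the equidistribution theorem does not directly send the $\psi$-periodic averages of $h_\st$ to $\sum_v r_v\int\log^+|T|_v\,d\mu_{\psi,v}$; the two differ by exactly $h_\psi(\infty)$, which is precisely the contribution accounted for explicitly in~(\ref{ExampleIdentity}).
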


\begin{proof}
Extending $K$ if necessary we may assume without loss of generality that $x\in\PP^1(K)$, and since the inequality $(\ref{HeightDiffIneq})$ obviously holds when $x=\infty$, we may assume that $x\neq\infty$.  For each place $v$ of $K$ and each integer $n\geq1$ define
\begin{equation*}
S_{v,n} = \frac{1}{d^n+1}\sum_{\stackrel{\psi^n(\alpha)=\alpha}{\alpha\neq\infty}}\log|x-\alpha|_v
\end{equation*}
It follows from \cite{SzpiroTucker} Thm. 4.10 that the limit $\lim_{n\to+\infty}S_{v,n}$ exists for all places $v$ of $K$, that this limit vanishes for all but finitely many places $v$ of $K$, and that
\begin{equation*}
h_\psi(x)-h_\psi(\infty)=\sum_{v\in M_K}r_v\lim_{n\to+\infty}S_{v,n}.
\end{equation*}
Define constants $\theta_v=\log 2$ if $v$ is archimedean, and $\theta_v=0$ if $v$ is non-archimedean.  Then we have the elementary inequality
\begin{equation*}
\log|x-\alpha|_v\leq\log^+|x|_v+\log^+|\alpha|_v+\theta_v,
\end{equation*}
and therefore using Theorem~\ref{IntroThm3} we have
\begin{equation*}
\begin{split}
h_\psi(x)-h_\psi(\infty) & \leq \sum_{v\in M_K}r_v\limsup_{n\to+\infty}\bigg\{\log^+|x|_v+\frac{1}{d^n+1}\sum_{\stackrel{\psi^n(\alpha)=\alpha}{\alpha\neq\infty}}\log^+|\alpha|_v+\theta_v\bigg\} \\
	& \leq h_\st(x)+ \frac{1}{d^n+1}\sum_{\psi^n(\alpha)=\alpha}h_\st(\alpha)+\log 2 \\
	& = h_\st(x)+ \langle\sigma,\psi\rangle +\log 2,
\end{split}
\end{equation*}
which implies $(\ref{HeightDiffIneq})$.
\end{proof}

\noindent
{\em Remark.}  We do not know whether the constant $\log 2$ on the right-hand-side of $(\ref{HeightDiffIneq})$ is best possible.  However, in $\S$~\ref{COCSect} we will give an example which shows that Theorem~\ref{HeightDiffProp} is false if $\log 2$ is replaced by a sufficiently small positive constant.


\section{Examples}\label{ExampleSection}

\subsection{The squaring map and an arbitrary map}\label{SquaringSect}  Let $\sigma:\PP^1\to\PP^1$ be the map defined by $\sigma(x)=x^2$, and let $\psi:\PP^1\to\PP^1$ be an arbitrary rational map of degree at least two.  The following proposition will be useful in calculating (or at least estimating) the value of $\langle\sigma,\psi\rangle$ in several specific cases.  As usual we denote by $x=(x:1)$ the affine coordinate on $\PP^1$, where $\infty=(1:0)$.  For $r\geq0$, define $\log^+r=\log\max\{1,r\}$.

\begin{prop}\label{ExampleProp}
Let $\sigma:\PP^1\to\PP^1$ be the map defined by $\sigma(x)=x^2$, and let $\psi:\PP^1\to\PP^1$ be an arbitrary map of degree at least two defined over a number field $K$.  Then
\begin{equation}\label{ExampleIdentity}
\langle\sigma,\psi\rangle=h_\psi(\infty)+\sum_{v\in M_K}r_v\int\log^+|x|_v d\mu_{\psi,v}(x).
\end{equation}
\end{prop}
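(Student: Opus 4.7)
The plan is to specialize the defining formula (\ref{GlobalAZPairing}) for the Arakelov-Zhang pairing to $\varphi=\sigma$ with a convenient choice of sections. Take $s(x_0,x_1)=x_1$ and $t(x_0,x_1)=x_0$, so that $\div(s)=\infty=(1:0)$ and $\div(t)=0=(0:1)$. The two height terms in (\ref{GlobalAZPairing}) then collapse nicely: since (as observed at the start of $\S$\ref{HeightDiffSection}) $h_\sigma=h_\st$, we have $h_\sigma(\div(t))=h_\st(0)=0$ and $h_\psi(\div(s))=h_\psi(\infty)$. Thus
$$\langle\sigma,\psi\rangle = h_\psi(\infty) + \sum_{v\in M_K} r_v\,\langle\sigma,\psi\rangle_{s,t,v},$$
and the problem reduces to identifying each local pairing with the corresponding integral.

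For that step I would invoke the second line of (\ref{LocalAZPairing}), again using the observation from $\S$\ref{HeightDiffSection} that the canonical metric $\|\cdot\|_{\sigma,\epsilon_\sigma,v}$ coincides with the standard metric $\|\cdot\|_{\st,v}$ at every place $v$:
$$\langle\sigma,\psi\rangle_{s,t,v} = \log\|s(\div(t))\|_{\st,v} - \int \log\|s(x)\|_{\st,v}\,d\mu_{\psi,v}(x).$$
The boundary term vanishes because, by (\ref{StandardMetric}), $\|x_1(0:1)\|_{\st,v}=|1|_v/\max\{|0|_v,|1|_v\}=1$. For the integrand, the same formula applied in the affine coordinate $x=(x:1)$ gives $-\log\|x_1(x)\|_{\st,v}=\log\max\{|x|_v,1\}=\log^+|x|_v$, and by the last paragraph of $\S$\ref{BerkovichSect} this function extends continuously from $\PP^1(\CC_v)$ to all of $\Psf^1_v$, so that the integral against $\mu_{\psi,v}$ is well-defined and equals $\int \log^+|x|_v\,d\mu_{\psi,v}(x)$.

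Combining the two calculations produces the identity (\ref{ExampleIdentity}) directly. No serious obstacle is anticipated: the argument is essentially a bookkeeping exercise in the definitions, with the only nontrivial structural input being the coincidence $\|\cdot\|_{\sigma,\epsilon_\sigma,v}=\|\cdot\|_{\st,v}$ (equivalently $h_\sigma=h_\st$) for the squaring map with its natural polarization $\epsilon_\sigma$.
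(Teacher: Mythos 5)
Your proposal is correct and follows essentially the same route as the paper's proof: both specialize the section $s=x_1$ and exploit the coincidence $\|\cdot\|_{\sigma,\epsilon_\sigma,v}=\|\cdot\|_{\st,v}$ to recognize $-\log\|s(x)\|_{\sigma,\epsilon_\sigma,v}=\log^+|x|_v$. The only cosmetic difference is that you also fix $t=x_0$ and work directly from $(\ref{GlobalAZPairing})$ (so the boundary term $\log\|s(\div(t))\|_{\st,v}$ must be checked to vanish), whereas the paper leaves $t$ arbitrary and combines $(\ref{GlobalAZPairingAlt})$ with $(\ref{CanonicalLocalGlobal})$ to absorb that term into $h_\psi(\infty)$.
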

\begin{proof}
Let $\epsilon_\sigma:\Ocal(2)\stackrel{\sim}{\to}\sigma^*\Ocal(1)$ be the polarization which is normalized so that $\epsilon_\sigma^*\sigma^*x_j=x_j^2$, and let $s\in\Gamma(\PP^1,\Ocal(1))$ be the section defined by $s(x_0,x_1)=x_1$; thus $\div(s)=\infty$.  Let $t\in\Gamma(\PP^1,\Ocal(1))$ be any section with $\div(t)\neq\infty$.  At each place $v\in M_K$ we have
\begin{equation*}
-\log\|s(x)\|_{\sigma,\epsilon_\sigma,v}=-\log\|s(x)\|_{\st,v}=-\log(1/\max\{1,|x|_v\})=\log^+|x|_v.
\end{equation*}
The identity $(\ref{ExampleIdentity})$ follows from this along with $(\ref{CanonicalLocalGlobal})$ and $(\ref{GlobalAZPairingAlt})$.
\end{proof}

\noindent
{\em Remark.}
In $(\ref{ExampleIdentity})$ at the nonarchimedean places, the integrand $x\mapsto\log^+|x|_v$ should be interpreted as a function on the whole Berkovich projective line $\Psf_v^1$.  The extension of the function $x\mapsto|x|_v$ to all of $\Psf^1_v$ is described in the last paragraph of $\S$~\ref{BerkovichSect}.

\subsection{The squaring map after a certain affine change of coordinates}\label{COCSect}  Let $\sigma:\PP^1\to\PP^1$ be the squaring map $\sigma(x)=x^2$, let $\alpha\in K$, and let $\sigma_\alpha:\PP^1\to\PP^1$ be the map defined by $\sigma_\alpha(x)=\alpha-(\alpha-x)^2$.  In other words, $\sigma_\alpha=\gamma_\alpha^{-1}\circ\sigma\circ\gamma_\alpha$, where $\gamma_\alpha$ is the automorphism of $\PP^1$ given by $\gamma_\alpha(x)=\alpha-x$.

In order to calculate the value of $\langle\sigma,\sigma_\alpha\rangle$ we must introduce the real-valued function defined for $t\geq0$ by $I(t)=\int_0^1\log^+|t+e^{2\pi i\theta}| d\theta-\log^+t$.

\begin{lem}\label{ILemma}
The function $I(t)$ is continuous, nonnegative, monotone increasing on the interval $0<t<1$, and monotone decreasing on the interval $1<t<2$.  Moreover $I(t)=0$ when $t=0$ and when $t\geq2$.  In particular, $\sup_{t\geq0}I(t)=I(1)$.
\end{lem}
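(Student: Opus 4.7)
The plan is to first rewrite $I(t)$ via Jensen's formula in a form that makes nonnegativity and the boundary vanishings transparent, and then to establish monotonicity by differentiation under the integral. Applied to the polynomial $z + t$, Jensen's formula gives $\int_0^1 \log|t + e^{2\pi i\theta}|\,d\theta = \log^+ t$ for every $t \geq 0$. Subtracting from the definition of $I(t)$ and writing $\log^- x := \max(-\log x, 0)$, one obtains the equivalent formula
\begin{equation*}
I(t) = \int_0^1 \log^-|t + e^{2\pi i\theta}|\,d\theta.
\end{equation*}
Nonnegativity of $I$ is then immediate, and $I(0) = 0$ because $|e^{2\pi i\theta}| = 1$ forces $\log^- = 0$. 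For $t \geq 2$, the triangle inequality gives $|t + e^{2\pi i\theta}| \geq t - 1 \geq 1$, so the integrand vanishes and $I(t) = 0$. Continuity of $I$ on $[0, \infty)$ follows from dominated convergence.

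For monotonicity, set $F(t) = \int_0^1 \log^+|t + e^{2\pi i\theta}|\,d\theta$ and $S(t) = \{\theta : |t + e^{2\pi i\theta}| \geq 1\} = \{\theta : \cos 2\pi\theta \geq -t/2\}$. For $t > 0$ the map $(t,\theta) \mapsto \log|t + e^{2\pi i\theta}|$ is smooth, and the only non-smoothness of $\log^+$ occurs on the moving boundary $\partial S(t)$, where the integrand vanishes. A standard differentiation-under-the-integral argument (valid on any compact subinterval of $(0,1)$ or $(1,2)$, where the partial derivative $\frac{t + \cos 2\pi\theta}{|t + e^{2\pi i\theta}|^2}$ is uniformly bounded on $S(t)$) yields
\begin{equation*}
F'(t) = \int_{S(t)} \frac{t + \cos 2\pi\theta}{|t + e^{2\pi i\theta}|^2}\,d\theta.
\end{equation*}
On $0 < t < 1$, $\log^+ t = 0$ so $I'(t) = F'(t)$; the defining condition $\cos 2\pi\theta \geq -t/2$ on $S(t)$ forces $t + \cos 2\pi\theta \geq t/2 > 0$, hence $I'(t) > 0$. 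On $1 < t < 2$, differentiating Jensen's formula gives $\int_0^1 \frac{t + \cos 2\pi\theta}{|t + e^{2\pi i\theta}|^2}\,d\theta = 1/t$, and therefore
\begin{equation*}
I'(t) = F'(t) - \frac{1}{t} = -\int_{T(t)} \frac{t + \cos 2\pi\theta}{|t + e^{2\pi i\theta}|^2}\,d\theta,
\end{equation*}
where $T(t) = [0,1] \setminus S(t) = \{\theta : |t + e^{2\pi i\theta}| < 1\}$. On $T(t)$ with $t > 1$, the bound $t + \cos 2\pi\theta \geq t - 1 > 0$ holds regardless of the constraint defining $T(t)$, so the integrand is positive and $I'(t) < 0$.

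The final statement $\sup_{t \geq 0} I(t) = I(1)$ then follows from the strict monotonicity on $(0, 1)$ and $(1, 2)$, the vanishing on $\{0\} \cup [2, \infty)$, and continuity at $t = 1$. The main technical subtlety is justifying the differentiation under the integral on the moving domain $S(t)$; this is precisely where the continuous vanishing of the integrand $\log^+|t + e^{2\pi i\theta}|$ across the boundary $\{|t + e^{2\pi i\theta}| = 1\}$ is essential, since it eliminates any Leibniz-type boundary contribution and leaves only the interior partial derivative.
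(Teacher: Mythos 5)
Your proof is correct, and the first half (Jensen's formula, the rewriting $I(t)=\int_0^1\log^+\bigl(1/|t+e^{2\pi i\theta}|\bigr)\,d\theta$, nonnegativity, vanishing at $t=0$ and $t\geq 2$, continuity) matches the paper essentially verbatim. The difference is in how monotonicity is established. You differentiate $F(t)=\int_0^1\log^+|t+e^{2\pi i\theta}|\,d\theta$ under the integral sign, obtain $F'(t)=\int_{S(t)}\frac{t+\cos 2\pi\theta}{|t+e^{2\pi i\theta}|^2}\,d\theta$, and then determine the sign of $I'(t)$ on each interval by subtracting the derivative $1/t$ of $\log^+t$ (via Jensen) on $(1,2)$. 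This is valid: the integrand is Lipschitz in $t$ with constant $1$ uniformly in $\theta$, so dominated convergence justifies the interchange, and as you note the continuity of $\log^+$ at the kink kills any boundary term from the moving domain $S(t)$. The paper instead sidesteps all of this machinery by observing that the integrand itself is pointwise monotone in $t$ for each fixed $\theta$: on $(0,1)$ the function $t\mapsto\log^+|t+e^{2\pi i\theta}|$ is nondecreasing, and on $(1,2)$ the function $t\mapsto\log^+\bigl(1/|t+e^{2\pi i\theta}|\bigr)$ is nonincreasing (because $|t+e^{2\pi i\theta}|$ is increasing once $t>|\cos 2\pi\theta|$), so the integrals inherit the same monotonicity with no differentiation needed. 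Your route buys you an explicit formula for $I'(t)$ and genuinely strict inequalities, at the cost of having to justify a Leibniz-type interchange on a $t$-dependent domain; the paper's route is shorter and requires only elementary comparison of integrands. Both are sound.
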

\begin{proof}
Clearly $I(0)=0$.  Using Jensen's formula $\int_0^1\log|t+e^{2\pi i\theta}| d\theta=\log^+t$ and the identity $\log r=\log^+r-\log^+(1/r)$, we have $I(t)=\int_0^1\log^+(1/|t+e^{2\pi i\theta}|) d\theta$, from which it follows that $I(t)$ is nonnegative and vanishes for $t\geq2$.  When $0<t<1$ the integrand $\log^+|t+e^{2\pi i\theta}|$ is monotone increasing as a function of $t$, and when $1<t<2$ the integrand $\log^+(1/|t+e^{2\pi i\theta}|)$ is monotone decreasing as a function of $t$; thus $I(t)$ has these same properties.
\end{proof}

We will now evaluate the pairing $\langle\sigma,\sigma_\alpha\rangle$ in terms of the height $h_\st(\alpha)$ and the function $I(t)$, and we will give bounds for the pairing $\langle\sigma,\sigma_\alpha\rangle$ in terms of $h_\st(\alpha)$ and the constant $I(1)$.  These bounds are sharp in the sense that there exist cases of equality, and thus it may be desirable to calculate $I(1)$ explicitly.  It turns out that
\begin{equation}\label{SmythCalc}
I(1) = \int_{0}^{1}\log^+|1+e^{2\pi i\theta}| d\theta=\int_{-1/3}^{1/3}\log|1+e^{2\pi i\theta}| d\theta = \frac{3\sqrt{3}}{4\pi}L(2,\chi)\approx 0.323067...,
\end{equation}
where $\chi$ is the nontrivial quadratic character modulo $3$ (that is $\chi(n)=0,1,-1$ according to whether $n\equiv0,1,2\pmod{3}$, repectively), and $L(2,\chi)=\sum_{n\geq1}\chi(n)n^{-2}$ is the value at $s=2$ of the associated Dirichlet $L$-function.  The calculation $(\ref{SmythCalc})$ follows from expanding $\log|1+e^{2\pi i\theta}|$ into its Fourier series and integrating term-by-term; it is equivalent to Smyth's evaluation of the Mahler measure of the two-variable polynomial $1+x+y$; see \cite{Boyd}.

\begin{prop}\label{ExamplePropCOC}
Let $\alpha\in K$, let $\sigma(x)=x^2$, and let $\sigma_\alpha(x)=\alpha-(\alpha-x)^2$.  Then:
\begin{quote}
{\bf (a)} $\langle\sigma,\sigma_\alpha\rangle= h_\st(\alpha)+\sum_{v\mid\infty}r_vI(|\alpha|_v)$. \\
{\bf (b)} $h_\st(\alpha)\leq\langle\sigma,\sigma_\alpha\rangle\leq h_\st(\alpha)+\frac{3\sqrt{3}}{4\pi}L(2,\chi)$. \\
{\bf (c)} If $|\alpha|_v\geq2$ for all archimedean $v\in M_K$, then $\langle\sigma,\sigma_\alpha\rangle= h_\st(\alpha)$. \\
{\bf (d)}  $\langle\sigma,\sigma_1\rangle=\frac{3\sqrt{3}}{4\pi}L(2,\chi)$.
\end{quote}
\end{prop}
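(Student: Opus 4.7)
The plan is to apply Proposition~\ref{ExampleProp}, which reduces $\langle\sigma,\sigma_\alpha\rangle$ to $h_{\sigma_\alpha}(\infty)+\sum_v r_v\int\log^+|x|_v\,d\mu_{\sigma_\alpha,v}(x)$, and then to compute each ingredient explicitly. Two preliminary observations simplify the setup: first, $\gamma_\alpha(\infty)=\infty$ implies $\sigma_\alpha(\infty)=\infty$, so $\infty$ is a fixed point of $\sigma_\alpha$ and $h_{\sigma_\alpha}(\infty)=0$; second, from the conjugation relation $\sigma_\alpha=\gamma_\alpha^{-1}\circ\sigma\circ\gamma_\alpha$ (with $\gamma_\alpha$ an involution) and the uniqueness of the canonical measure, one obtains $\mu_{\sigma_\alpha,v}=(\gamma_\alpha)_*\mu_{\sigma,v}$ at every place~$v$.

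The central computation evaluates the local integrals place by place. At a non-archimedean $v$, $\mu_{\sigma,v}=\delta_{\zeta_{0,1}}$ since $\sigma$ has good reduction (Proposition~\ref{GoodReductionProp}); the automorphism $\gamma_\alpha$ carries the disc $B(0,1)$ bijectively onto $B(\alpha,1)$, hence $\zeta_{0,1}$ to $\zeta_{\alpha,1}$, so $\mu_{\sigma_\alpha,v}=\delta_{\zeta_{\alpha,1}}$. Evaluating the extension of $|\cdot|_v$ to $\Psf^1_v$ at $\zeta_{\alpha,1}$ gives $\sup_{z\in B(\alpha,1)}|z|_v=\max\{|\alpha|_v,1\}$ by the ultrametric inequality, so the integral contributes exactly $\log^+|\alpha|_v$. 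At an archimedean $v$, $\mu_{\sigma,v}$ is the normalized Haar measure on $\TT$, so the pushforward is uniform on the circle of radius $1$ centered at $\alpha$, and
\begin{equation*}
\int\log^+|x|_v\,d\mu_{\sigma_\alpha,v}(x)=\int_0^1\log^+|\alpha-e^{2\pi i\theta}|_v\,d\theta.
\end{equation*}
Writing $\alpha=|\alpha|_v e^{i\phi}$, factoring out $e^{i\phi}$, and using the $\theta$-periodicity of the integrand allows one to replace $\alpha$ by $|\alpha|_v$; after the further substitution $\theta\mapsto\theta+\tfrac{1}{2}$, the integral becomes exactly the one appearing in the definition of $I$, giving $I(|\alpha|_v)+\log^+|\alpha|_v$.

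Summing over all places with weights $r_v$, the $\log^+|\alpha|_v$-terms reassemble into $h_\st(\alpha)$ via $(\ref{StandardHeight})$, leaving the archimedean $I$-contributions isolated; this yields statement~(a). Statement~(b) follows from Lemma~\ref{ILemma}, which gives $0\leq I(|\alpha|_v)\leq I(1)$, together with the archimedean local-global identity $\sum_{v\mid\infty}r_v=1$ and the evaluation $I(1)=\frac{3\sqrt{3}}{4\pi}L(2,\chi)$ from $(\ref{SmythCalc})$. Statement~(c) is immediate from $I(t)=0$ for $t\geq 2$ (Lemma~\ref{ILemma}). For statement~(d), taking $\alpha=1$ gives $h_\st(1)=0$ and $|\alpha|_v=1$ at every archimedean $v$, so $\langle\sigma,\sigma_1\rangle=\sum_{v\mid\infty}r_v\,I(1)=I(1)$. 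The main obstacle in this plan is the identification $\mu_{\sigma_\alpha,v}=(\gamma_\alpha)_*\mu_{\sigma,v}$ at non-archimedean places, which requires understanding how the affine automorphism $\gamma_\alpha$ acts on the Gauss point of $\Psf^1_v$; once that is in hand, everything else is a direct substitution followed by a rotation change of variables on $\TT$.
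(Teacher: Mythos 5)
Your proof is correct and follows essentially the same route as the paper: invoke Proposition~\ref{ExampleProp}, note $h_{\sigma_\alpha}(\infty)=0$, use $\mu_{\sigma_\alpha,v}=(\gamma_\alpha)_*\mu_{\sigma,v}$ (from conjugation-invariance of the canonical measure), then compute the local integrals directly, with the good-reduction identification $\mu_{\sigma,v}=\delta_{\zeta_{0,1}}$ at the finite places and the rotation argument at the archimedean places. The one point you flag as an "obstacle" — tracking the Gauss point under $\gamma_\alpha$ — you have in fact already handled correctly by the observation that $\gamma_\alpha$ carries $B(0,1)$ onto $B(\alpha,1)$ and hence $\zeta_{0,1}$ to $\zeta_{\alpha,1}$, which is precisely how the paper justifies it.
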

\begin{proof}
Let $v$ be a place of $K$.  The automorphism $\gamma_\alpha:\Psf_v^1\to\Psf_v^1$ has order $2$, and it interchanges the unit disc $B_v(0,1)$ of $\CC_v$ with the disc $B_v(\alpha,1)$ in $\CC_v$ centered at $\alpha$ with radius $1$.  When $v$ is non-archimedean, this implies that $\gamma_\alpha$ interchanges the two points $\zeta_{0,1}$ and $\zeta_{\alpha,1}$ of $\Psf^1$ (note that $\zeta_{0,1}=\zeta_{\alpha,1}$ when $|\alpha|_v\leq1$).  The canonical measures associated to $\sigma$ and $\sigma_\alpha$ are related by the identity $\mu_{\sigma_\alpha,v}(x)=\mu_{\sigma,v}(\gamma_\alpha(x))$.  Since $\mu_{\sigma,v}$ is the standard measure $\mu_{\st,v}$ described in $\S$~\ref{StandardMeasureSect}, this means that when $v$ is archimedean, $\mu_{\sigma_\alpha,v}$ is the uniform unit measure supported on the circle $|x-\alpha|_v=1$ in $\CC_v$, and when $v$ is non-archimedean, $\mu_{\sigma_\alpha,v}$ is the Dirac measure supported on the point $\zeta_{\alpha,1}$ of $\Psf^1$.  For non-archimedean $v$ we deduce that
\begin{equation*}
\int\log^+|x|_v d\mu_{\sigma_\alpha,v}(x)=\log^+|\zeta_{\alpha,1}|_v=\log^+|\alpha|_v,
\end{equation*}
and for archimedean $v$, we have
\begin{equation*}
\int\log^+|x|_v d\mu_{\sigma_\alpha,v}(x)=\int_0^1\log^+||\alpha|_v+e^{2\pi it}| dt=\log^+|\alpha|_v+I(|\alpha|_v).
\end{equation*}

The proof of {\bf (a)} is completed by assembling these calculations together into a global identity using $(\ref{ExampleIdentity})$, and noting that $h_{\sigma_\alpha}(\infty)=0$ since $\sigma_\alpha$ fixes $\infty$.  {\bf (b)} follows from {\bf (a)} along with the fact that $0\leq I(t)\leq I(1)=\frac{3\sqrt{3}}{4\pi}L(2,\chi)$ for all $t\geq0$, and that $\sum_{v\mid\infty}r_v=1$.  {\bf (c)} follows from {\bf (a)} and the fact that $I(t)=0$ for all $t\geq2$.  {\bf (d)} follows from {\bf (a)} with $K=\QQ$ and $\alpha=1$, and the identity $(\ref{SmythCalc})$.
\end{proof}

\noindent
{\em Remark.}  Since $\sigma_\alpha=\gamma_\alpha^{-1}\circ\sigma\circ\gamma_\alpha$, it follows from basic properties of canonical height functions that $h_{\sigma_\alpha}(x)=h_\st(\gamma_\alpha(x))=h_\st(\alpha-x)$.  Taking $\psi=\sigma_1$ in Theorem~\ref{HeightDiffProp}, and applying Proposition~\ref{ExamplePropCOC}~{\bf (d)}, the inequality $(\ref{HeightDiffIneq})$ becomes
\begin{equation*}
h_\st(1-x)-h_\st(x) \leq \frac{3\sqrt{3}}{4\pi}L(2,\chi)+\log2.
\end{equation*}
Taking $x=-1$ shows that in $(\ref{HeightDiffIneq})$, the constant $\log 2$ cannot be replaced with a constant which is less than $\log 2-\frac{3\sqrt{3}}{4\pi}L(2,\chi)\approx 0.37008...$

\subsection{Quadratic polynomials}\label{QuadPolySect}  Let $\sigma:\PP^1\to\PP^1$ be the squaring map $\sigma(x)=x^2$, let $c\in K$, and let $\psi_c:\PP^1\to\PP^1$ be the map defined by $\psi_c(x)=x^2+c$.  We will prove upper and lower bounds on the pairing $\langle\sigma,\psi_c\rangle$ which show that $\langle\sigma,\psi_c\rangle$ is about the size of $(1/2)h_\st(c)$ when $h_\st(c)$ is large.

The proof is based on the fact that the support of the canonical measure $\mu_{\psi_c,v}$ is fairly well understood.  If $v$ is non-archimedean and $|c|_v$ is small, then (at least some conjugate of) $\psi_c$ has good reduction and so $\mu_{\psi_c,v}$ is supported on a certain point of $\Psf^1_v$.  For all other $c$ in the non-archimedean case, and for any $c$ in the archimedean case, $\supp(\mu_{\psi_c,v})$ is contained in the filled Julia set $J_v(\psi_c)=\{x\in\CC_v\mid|\psi_c^k(x)|_v\not\to+\infty\}$; in the non-archimedean case these facts are proved in \cite{BenedettoBriendPerdry}; in the archimedean case see \cite{Milnor}.  Finally,  elementary arguments show that $J_v(\psi_c)$ must itself be contained in a disc or an annulus with radii comparable to $\max\{1,|c|_v\}^{1/2}$.

\begin{prop}\label{ExamplePropQuad}
Let $c\in K$, let $\sigma(x)=x^2$, and let $\psi_c(x)=x^2+c$.  Then
\begin{equation}\label{ExampleQuadIneq}
(1/2)h_\st(c)-\log3 \leq\langle\sigma,\psi_c\rangle\leq (1/2)h_\st(c)+\log2.
\end{equation}
\end{prop}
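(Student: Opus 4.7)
The plan is to begin from Proposition~\ref{ExampleProp}: since $\psi_c$ fixes $\infty$ we have $h_{\psi_c}(\infty)=0$, so the pairing reduces to
\begin{equation*}
\langle\sigma,\psi_c\rangle = \sum_{v\in M_K} r_v A_v, \qquad A_v := \int\log^+|x|_v\,d\mu_{\psi_c,v}(x).
\end{equation*}
Both directions of $(\ref{ExampleQuadIneq})$ will then follow from local estimates on each $A_v$ combined with the description of $\supp(\mu_{\psi_c,v})$ sketched in the remarks before the statement.

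For the upper bound I will estimate $A_v$ by $\log^+$ of the radius of a disc containing $\supp(\mu_{\psi_c,v})$. At a non-archimedean $v$ with $|c|_v\leq 1$ the natural lift $(x_0^2+cx_1^2,\,x_1^2)$ has coefficients in $\KK^\circ$ with nonvanishing reduction, so Proposition~\ref{GoodReductionProp} gives $\mu_{\psi_c,v}=\mu_{\st,v}$ and hence $A_v=0$. At a non-archimedean $v$ with $|c|_v>1$ the ultrametric inequality shows $|\psi_c(x)|_v=|x|_v^2$ whenever $|x|_v>|c|_v^{1/2}$, so iterates escape and $\supp(\mu_{\psi_c,v})\subseteq\{|x|_v\leq|c|_v^{1/2}\}$, giving $A_v\leq\tfrac{1}{2}\log|c|_v$. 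At an archimedean $v$ the elementary estimate $|\psi_c(x)|_v\geq|x|_v^2-|c|_v$ confines the filled Julia set $J_v(\psi_c)$ inside $\{|x|_v\leq R_v\}$ with $R_v=(1+\sqrt{1+4|c|_v})/2\leq 1+|c|_v^{1/2}$, whence $A_v\leq\log 2+\tfrac{1}{2}\log^+|c|_v$. Summing with the weights $r_v$ and using $\sum_{v\mid\infty}r_v=1$ yields the upper bound in $(\ref{ExampleQuadIneq})$.

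For the lower bound I will invoke Theorem~\ref{IntroThm3} to write
\begin{equation*}
\langle\sigma,\psi_c\rangle \;=\; \lim_{n\to\infty}\frac{1}{2^n+1}\sum_{\psi_c^n(\alpha)=\alpha}h_\st(\alpha).
\end{equation*}
The point $\infty$ is always a fixed point and contributes $0$; the remaining $2^n$ periodic points are the roots of the monic polynomial $\psi_c^n(x)-x$, whose constant term equals $\pm P_n$ with $P_n:=\psi_c^n(0)$. Expanding the sum over roots as $\sum_v r_v\log M_v(\psi_c^n(x)-x)$ (where $M_v$ denotes the Mahler measure) and using the elementary bound $M_v(f)\geq\max\{|P_n|_v,1\}$ valid for any monic polynomial $f$ (since $\prod_\alpha\max\{|\alpha|_v,1\}\geq\max\{\prod_\alpha|\alpha|_v,1\}$), one is reduced to bounding $|P_n|_v$ from below. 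The recursion $P_1=c$, $P_{n+1}=P_n^2+c$ combined with a short induction gives $|P_n|_v=|c|_v^{2^{n-1}}$ at non-archimedean $v$ with $|c|_v>1$ (using the ultrametric inequality and $|P_n|_v^2>|c|_v$) and $|P_n|_v\geq 2(|c|_v/2)^{2^{n-1}}$ at archimedean $v$ with $|c|_v\geq 2$. Dividing by $2^n+1$, passing to the limit, and summing across places produces $\langle\sigma,\psi_c\rangle\geq\tfrac{1}{2}h_\st(c)-\tfrac{1}{2}\log 2$, which is stronger than the claimed inequality since $\tfrac{1}{2}\log 2<\log 3$.

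The main technical obstacle is the archimedean induction: the naive step $|P_{n+1}|_v\geq|P_n|_v^2-|c|_v$ contains a subtracted term that could destroy the geometric-exponential growth, and one has to verify carefully that the threshold $|c|_v\geq 2$ is strong enough to preserve the inductive hypothesis at each step. The residual cases (archimedean $v$ with $|c|_v<2$) contribute only through the trivial bound $A_v\geq 0$; the resulting deficiency $\tfrac{1}{2}\log^+|c|_v\leq\tfrac{1}{2}\log 2$ at such places is absorbed without cost into the $\log 3$ slack in the statement.
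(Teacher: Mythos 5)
Your proof is correct, and the lower bound takes a genuinely different route from the paper's. For the upper bound your argument is essentially the paper's: bound $\langle\sigma,\psi_c\rangle$ by $\sum_v r_v\int\log^+|x|_v\,d\mu_{\psi_c,v}$ via Proposition~\ref{ExampleProp} and confine $\supp(\mu_{\psi_c,v})$ in a disc of radius $\asymp\max\{1,|c|_v\}^{1/2}$. Your non-archimedean version is slightly cleaner, since the ultrametric escape criterion $|\psi_c(x)|_v=|x|_v^2>|c|_v^{1/2}$ for $|x|_v>|c|_v^{1/2}$ gives $\supp(\mu_{\psi_c,v})\subseteq\{|x|_v\leq|c|_v^{1/2}\}$ uniformly in $v$, whereas the paper must treat $v\mid 2$, $1<|c|_v\leq 4$ by conjugating to a good-reduction model. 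For the lower bound, however, you diverge: the paper stays local--analytic, proving that for archimedean $v$ with $|c|_v>9$ the filled Julia set avoids the disc $\{|x|_v<\tfrac{1}{3}|c|_v^{1/2}\}$, giving $\int\log^+|x|_v\,d\mu_{\psi_c,v}\geq\tfrac{1}{2}\log|c|_v-\log 3$. You instead go through Theorem~\ref{IntroThm3}: the finite periodic points of period $n$ are the roots of the monic polynomial $\psi_c^n(x)-x$, whose constant term is $P_n=\psi_c^n(0)$, so $\frac{1}{2^n+1}\sum_\alpha h_\st(\alpha)\geq\frac{1}{2^n+1}h_\st(P_n)$; the recursion $P_{n+1}=P_n^2+c$ then gives $|P_n|_v=|c|_v^{2^{n-1}}$ ultrametrically when $|c|_v>1$ and $|P_n|_v\geq 2(|c|_v/2)^{2^{n-1}}$ archimedeanly when $|c|_v\geq 2$ (your induction closes: $|P_n|_v^2\geq 4(|c|_v/2)^{2^n}\geq 2|c|_v$, so $|P_{n+1}|_v\geq 4(|c|_v/2)^{2^n}-|c|_v\geq 2(|c|_v/2)^{2^n}$), and since only finitely many places contribute the interchange of $\liminf$ with the sum is harmless. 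This replaces the Julia-set geometry by arithmetic of the critical orbit, and in fact yields the sharper constant $\tfrac{1}{2}\log 2$ in place of $\log 3$, at the (minor) cost of invoking Theorem~\ref{IntroThm3} rather than only the local integral formula. Both approaches are sound; yours is a nice alternative that buys a better constant.
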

\begin{proof}
We will first show that if $v$ is non-archimedean then
\begin{equation}\label{QuadNonArch}
\int\log^+|x|_v d\mu_{\psi_c,v}(x)=(1/2)\log^+|c|_v.
\end{equation}
First suppose that $v\nmid2$.  If $|c|_v\leq1$, then $\psi_c$ has good reduction, so $\mu_{\psi_c,v}$ is the standard measure $\mu_{\st,v}$, that is, the Dirac measure supported at the point $\zeta_{0,1}$ of $\Psf^1_v$.  In this case both sides of $(\ref{QuadNonArch})$ vanish.  If $|c|_v>1$, then
\begin{equation}\label{QuadNonArchSupp}
\supp(\mu_{\psi_c,v}) \subseteq J_v(\psi_c)\subseteq\{x\in\CC_v\mid |x|_v=|c|_v^{1/2}\}.
\end{equation}
The second inclusion follows from the fact that when $|x|_v\neq|c|_v^{1/2}$, the ultrametric inequality implies that $|\psi_c^k(x)|_v\to+\infty$.  It follows from $(\ref{QuadNonArchSupp})$ that the integrand on the left-hand-side of $(\ref{QuadNonArch})$ is the constant function $(1/2)\log^+|c|_v$, completing the proof of $(\ref{QuadNonArch})$ when $v$ is non-archimedean and $v\nmid2$.

Suppose now that $v\mid2$.  If $|c|_v\leq1$ or if $|c|_v>4$, then the proof of $(\ref{QuadNonArch})$ is the same as the case $v\nmid2$.  However, if $1<|c|_v\leq4$, then consider the conjugate $\tilde{\psi}_c=\gamma^{-1}\circ\psi_c\circ\gamma$ of $\psi_c$, where $\gamma(x)=x+b$ and $b\in\CC_v$ is a fixed point of $\psi_c$.  The ultrametric inequality implies that $|b|_v=|c|_v^{1/2}\leq2$, and we calculate $\tilde{\psi}_c(x)=x^2+2bx$.  Since $|b|_v\leq2$, we must have $|2b|_v\leq1$, showing that $\tilde{\psi}_c$ has good reduction.  Therefore $\mu_{\tilde{\psi}_c,v}$ is supported on the point $\zeta_{0,1}$ of $\Psf^1_v$.  As an automorphism of $\Psf_v^1$, $\gamma$ takes $\zeta_{0,1}$ to $\zeta_{b,1}$, and it follows that $\mu_{\psi_c,v}$ is supported on $\zeta_{b,1}$ in $\Psf_v^1$.  The left-hand-side of $(\ref{QuadNonArch})$ is therefore equal to $\log|\zeta_{b,1}|_v=\log|b|_v=(1/2)\log|c|_v$, completing the proof of $(\ref{QuadNonArch})$.

We will now show that if $v$ is archimedean then
\begin{equation}\label{QuadArch}
(1/2)\log^+|c|_v-\log3\leq \int\log^+|x|_v d\mu_{\psi_c,v}(x)\leq (1/2)\log^+|c|_v+\log2.
\end{equation}
The upper bound in $(\ref{QuadArch})$ follows immediately from the fact that
\begin{equation}\label{QuadArchSupp1}
\supp(\mu_{\psi_c,v}) \subseteq J_v(\psi_c)\subseteq\{x\in\CC_v\mid |x|_v\leq B\}.
\end{equation}
where $B=2\max\{1,|c|_v\}^{1/2}$.  To prove the second inclusion in $(\ref{QuadArchSupp1})$, it suffices to show that if $|x|_v>B$, then $|\psi_c^k(x)|_v\to+\infty$.  Assuming that $|x|_v>B$, we have $|x^2|_v>4|c|_v$, so $|\psi_c(x)-x^2|_v=|c|_v<\frac{1}{4}|x^2|_v$.  This implies that $|\psi_c(x)|_v>\frac{3}{4}|x^2|_v>\frac{3}{2}|x|_v$, where the last inequality follows from the fact that $|x|_v>2$.  Iterating we deduce that $|\psi_c^k(x)|_v>(\frac{3}{2})^k|x|_v\to+\infty$, as desired.

We turn to the lower bound in $(\ref{QuadArch})$.  If $|c|_v\leq9$ then there is nothing to prove, since the left-hand-side of $(\ref{QuadArch})$ is nonpositive and the integral in $(\ref{QuadArch})$ is nonnegative.  So we may assume that $|c|_v>9$.  The desired inequality follows immediately from the fact that
\begin{equation}\label{QuadArchSupp2}
\supp(\mu_{\psi_c,v}) \subseteq J_v(\psi_c)\subseteq\{x\in\CC_v\mid |x|_v\geq A\}.
\end{equation}
where $A=\frac{1}{3}|c|_v^{1/2}$.  To prove the second inclusion in $(\ref{QuadArchSupp2})$, it suffices to show that if $|x|_v<A$, then $|\psi_c^k(x)|_v\to+\infty$.  Assuming that $|x|_v<A$, we have
\begin{equation}\label{QuadArchIneq}
|\psi_c(x)|_v=|x^2+c|_v\geq|c|_v-|x^2|_v>|c|_v-(1/9)|c|_v=(8/9)|c|_v>(8/3)|c|_v^{1/2},
\end{equation}
where in the last inequality we used that $|c|_v^{1/2}>3$.  In particular, $(\ref{QuadArchIneq})$ implies that $\psi_c(x)$ is not in the disc on the right-hand-side of $(\ref{QuadArchSupp1})$, which implies that $\psi_c(x)\notin J_v(\psi_c)$.  Therefore $|\psi_c^k(x)|_v\to+\infty$, as desired.

Finally, the proof of $(\ref{ExampleQuadIneq})$ is completed by assembling $(\ref{QuadNonArch})$ and $(\ref{QuadArch})$ into  global inequalities using $(\ref{ExampleIdentity})$, and noting that $h_{\psi_c}(\infty)=0$ since $\psi_c$ fixes $\infty$.
\end{proof}

\subsection{Latt\`es maps associated to a family of elliptic curves}\label{LattesSect}  Let $a$ and $b$ be positive integers, and let $E/\QQ$ be the elliptic curve given by the Weierstrass equation
\begin{equation}\label{WeierstrassEq}
y^2=P(x)=x(x-a)(x+b).
\end{equation}
Let $\psi_E:\PP^1\to\PP^1$ be the map obtained from the action of the doubling map $[2]:E\to E$ on the $x$-coordinate of $E$.  More precisely, we have the following commutative diagram and explicit formula:
\begin{equation}\label{LattesMap}
\begin{CD}
E  @> [2] >>   E \\
@V x VV                                    @VV x V \\
\PP^1           @> \psi_E >>      \PP^1
\end{CD}
\hskip2cm
\psi_E(x) = \frac{(x^2+ab)^2}{4x(x-a)(x+b)}.
\end{equation}
For more details about such maps, which are known as Latt\`es maps, see \cite{SilvermanADS} $\S$~6.4.  The explicit formula in $(\ref{LattesMap})$ can be deduced from the duplication formula III.2.3(d) in \cite{SilvermanI}.

\begin{prop}\label{ExamplePropLattes}
Let $\sigma:\PP^1\to\PP^1$ be the squaring map $\sigma(x)=x^2$ and let $\psi_E:\PP^1\to\PP^1$ be the Latt\`es map $(\ref{LattesMap})$ associated to the elliptic curve $E/\QQ$ defined in $(\ref{WeierstrassEq})$, where $a$ and $b$ are positive integers.  Then there exists an absolute constant $c_1>0$ such that
\begin{equation}\label{LattesPropIneq}
\log\sqrt{ab}\leq \langle\sigma,\psi_E\rangle\leq c_1+\log\sqrt{ab}.
\end{equation}
\end{prop}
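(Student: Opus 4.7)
The plan is to apply Proposition~\ref{ExampleProp}: since $\infty$ is a fixed point of $\psi_E$ (the leading behavior is $\psi_E(x) \sim x/4$), we have $h_{\psi_E}(\infty)=0$, so
\begin{equation*}
\langle\sigma,\psi_E\rangle \;=\; \sum_{v\in M_\QQ} r_v \int_{\Psf^1_v}\log^+|x|_v\, d\mu_{\psi_E,v}.
\end{equation*}
The crucial geometric input is that $\psi_E$ is semi-conjugate to the doubling map $[2]$ on $E$ via the degree-two quotient $x\colon E\to\PP^1$, so $\psi_E\circ x = x\circ [2]$. Combined with the defining invariance $[2]^*\mu_{E,v} = 4\mu_{E,v}$ of the canonical (Haar) measure on the Berkovich analytification of $E$ over $\CC_v$, together with the uniqueness characterization of the canonical measure of $\psi_E$, this forces $\mu_{\psi_E,v} = x_*\mu_{E,v}$. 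Each local integral therefore becomes $\int \log^+|x(P)|_v\, d\mu_{E,v}(P)$, a quantity governed by the arithmetic geometry of $E$.

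Next I would analyze this integral place by place. At primes $p\nmid 2ab(a+b)$, a direct resultant computation on the explicit formula $(\ref{LattesMap})$ shows that $\psi_E$ has good reduction, so by Proposition~\ref{GoodReductionProp} we have $\mu_{\psi_E,p}=\mu_{\st,p}$ and the integral vanishes. For $p\mid a$ with $p\nmid 2b(a+b)$, the cubic $x(x-a)(x+b)$ reduces to $x^2(x+b)$ modulo $p$, so $E$ acquires multiplicative reduction with $v_p(j_E^{-1})=2v_p(a)$; via the Tate uniformization, $\mu_{E,p}$ is uniform Haar measure on the Berkovich skeleton circle, and an explicit calculation on the skeleton shows that the integral equals $\tfrac{1}{2}v_p(a)\log p$, up to a nonnegative error. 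A symmetric analysis handles $p\mid b$, while primes $p\mid 2(a+b)$ contribute a bounded total. At the Archimedean place, $\mu_{E,\infty}$ is Haar measure on $\CC/\Lambda$ with $x=\wp(z)-(b-a)/3$; exploiting the fact that the family of maps $\psi_E$ is related by an $\RR$-rational conjugation $x\mapsto \lambda x$ to a rescaled subfamily of bounded parameter range, one reduces the Archimedean integral to a uniform estimate yielding an absolute $O(1)$ bound.

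Combining these local contributions gives
\begin{equation*}
\langle\sigma,\psi_E\rangle = \tfrac{1}{2}\log(ab) + O(1) = \log\sqrt{ab} + O(1),
\end{equation*}
which proves the upper bound with some absolute $c_1>0$. The lower bound $\langle\sigma,\psi_E\rangle\geq\log\sqrt{ab}$ follows because the integrand $\log^+|x|_v$ is pointwise non-negative at every place, so $\int\log^+|x|_v\, d\mu_{\psi_E,v}\geq 0$ for all $v$, and the leading-order contributions at primes $p\mid ab$ already sum to $\tfrac{1}{2}\log(ab) = \log\sqrt{ab}$ without any negative correction at those primes. The principal obstacle will be the bad-reduction analysis: identifying the precise support of $\mu_{E,p}$ on the Berkovich skeleton of the Tate curve to extract the sharp leading term $\tfrac{1}{2}v_p(a)\log p$, together with confirming that the residual contributions at $p\mid 2(a+b)$ and at the Archimedean place can be absorbed into a single absolute constant $c_1$, independent of the parameters $a,b$ ranging over all positive integers.
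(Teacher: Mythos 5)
Your overall framework is right: apply Proposition~\ref{ExampleProp}, note $h_{\psi_E}(\infty)=0$, use the push-forward $\mu_{\psi_E,v}=x_*\mu_{E,v}$. But the place-by-place accounting is exactly inverted, and this is a genuine gap, not a cosmetic one. You claim that at a prime $p\mid a$ of multiplicative reduction the local integral is $\tfrac12 v_p(a)\log p$, and that the primes dividing $ab$ therefore produce the leading term $\log\sqrt{ab}$, with the archimedean place contributing only $O(1)$. In fact \emph{every} non-archimedean integral vanishes. Since $a,b$ are positive integers, $|a|_v\leq 1$ and $|b|_v\leq 1$ at all finite $v$, and the explicit formula $\psi_E(x)=(x^2+ab)^2/4x(x-a)(x+b)$ plus the ultrametric inequality give $|\psi_E(x)|_v=|x|_v/|4|_v>1$ whenever $|x|_v>1$. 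So the region $|x|_v>1$ is forward-invariant, hence $\psi_E^{-1}(\Bsf_v(0,1))\subseteq\Bsf_v(0,1)$, and since $\mu_{\psi_E,v}$ is a weak limit of pullback measures of $\delta_{\zeta_{0,1}}$ with $\zeta_{0,1}\in\Bsf_v(0,1)$, its support lies in $\Bsf_v(0,1)$, where $\log^+|x|_v\equiv0$. This contradicts your multiplicative-reduction estimate; the Tate-uniformization skeleton is a skeleton for the Tate parameter, not for the Weierstrass $x$-coordinate, and the image of the skeleton under $x$ sits inside the closed unit ball here.

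Consequently the entire term $\log\sqrt{ab}$ must come from the archimedean place, which is what the paper shows. Its Lemma~\ref{ArchLattesLem} gives the explicit formula $\mu_{\psi_E}=C_P^{-1}|P(x)|^{-1}\ell(x)$ for the archimedean canonical measure, where $\ell$ is Lebesgue measure and $P(x)=x(x-a)(x+b)$. Passing to polar coordinates and using the functional equation $F(1/\alpha,1/\beta)=\alpha\beta F(\alpha,\beta)$ for the kernel $F(\alpha,\beta)=\int_0^{2\pi}|\alpha e^{i\theta}-1|^{-1}|\beta e^{i\theta}+1|^{-1}d\theta$ together with the substitution $r\mapsto ab/r$, one finds $\int\log^+|x|\,d\mu_{\psi_E}=\log\sqrt{ab}+\Theta_{a,b}$ with $\Theta_{a,b}\geq0$ and $\Theta_{a,b}=O(1)$ uniformly. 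Your lower-bound argument also breaks: you assert the primes $p\mid ab$ ``already sum to $\log\sqrt{ab}$,'' but since those contributions are zero, your reasoning would only yield $\langle\sigma,\psi_E\rangle\geq0$. The correct source of the lower bound is $\Theta_{a,b}\geq0$ at the archimedean place.
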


\noindent
{\em Remark.}  An explicit value for the constant $c_1$ could in principle be calculated, although we will not attempt to do so.

Before we prove Proposition~\ref{ExamplePropLattes} we will need to have an explicit formula for the canonical measure $\mu_{\psi_E}$ at the archimedean place.  Since this may be useful in other settings, we will state this formula in slightly more generality than we need in the following lemma.

\begin{lem}\label{ArchLattesLem}
Let $E/\CC$ be an elliptic curve given by a Weierstrass equation $y^2=P(x)=x^3+Ax^2+Bx+C$, and let $\psi_E:\PP^1\to\PP^1$ be the Latt\`es map satisfying $x\circ[2]=\psi_E\circ x$, where $[2]:E\to E$ is the doubling map and $x:E\to\PP^1$ is the $x$-coordinate map. Then the canonical measure $\mu_{\psi_E}$ on $\PP^1(\CC)$ is given by
\begin{equation}\label{LattesCanMeasure}
\mu_{\psi_E}(x)=C_P^{-1}|P(x)|^{-1}\ell(x),
\end{equation}
where $\ell(x)$ is the measure on $\PP^1(\CC)=\CC\cup\{\infty\}$ which coincides with Lebesgue measure on $\CC$, and $C_P=\int|P(x)|^{-1}d\ell(x)$.
\end{lem}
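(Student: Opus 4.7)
The plan is to identify $\mu_{\psi_E}$ with the pushforward $x_\ast\omega_E$ of the normalized Haar probability measure $\omega_E$ on the compact complex Lie group $E(\CC)$, and then compute this pushforward explicitly via the Weierstrass uniformization. The stated formula is then recovered by the forced normalization.

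First I would show that $\mu_{\psi_E}=x_\ast\omega_E$. Since $[2]\colon E\to E$ is a surjective group homomorphism of degree $4$ and $\omega_E$ is translation invariant, a standard argument (using $[2]\circ\tau_a=\tau_{[2]a}\circ[2]$) gives $[2]^\ast\omega_E=4\omega_E$. Combining this with the commutative diagram $\psi_E\circ x=x\circ[2]$ and the functoriality $\psi_E^\ast\circ x_\ast=x_\ast\circ[2]^\ast$ of pullback and pushforward through finite morphisms yields
$$\psi_E^\ast(x_\ast\omega_E)=x_\ast([2]^\ast\omega_E)=4(x_\ast\omega_E).$$
Since $x_\ast\omega_E$ is a probability measure satisfying the invariance characterizing $\mu_{\psi_E}$, and since Lattès maps on $\PP^1(\CC)$ have no exceptional points, one concludes $\mu_{\psi_E}=x_\ast\omega_E$.

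Next I would compute $x_\ast\omega_E$ explicitly. Fix an isomorphism of complex Lie groups $\CC/\Lambda\stackrel{\sim}{\to}E(\CC)$, and normalize the uniformizing parameter $u$ so that the holomorphic invariant differential is identified with $dx/(2y)\leftrightarrow du$, i.e., $dx=2y\,du$. Then $|dx|^2=4|y|^2\,|du|^2=4|P(x)|\,|du|^2$. The normalized Haar measure on $E(\CC)$ corresponds to $\omega_E=\Vol(\Lambda)^{-1}\,|du|^2$, and because $x\colon E\to\PP^1$ is generically $2$-to-$1$, pushing forward absorbs a factor of two:
$$x_\ast\omega_E=\frac{2}{4\Vol(\Lambda)\,|P(x)|}\,\ell(x)=\frac{1}{2\Vol(\Lambda)\,|P(x)|}\,\ell(x).$$
Since $x_\ast\omega_E$ must have total mass one, we read off $2\Vol(\Lambda)=\int_{\CC}|P(x)|^{-1}\,d\ell(x)=C_P$, which gives $(\ref{LattesCanMeasure})$.

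The main technical obstacle is to justify the functoriality identity $\psi_E^\ast(x_\ast\omega_E)=x_\ast([2]^\ast\omega_E)$ in a manner that handles branching correctly: the finite ramification locus of $x$ together with the $4$-torsion branch locus of $[2]$ must be shown to have measure zero for $\omega_E$, so that the identity of locally absolutely continuous measures established off the branch locus extends to all of $\PP^1(\CC)$. Everything else is a routine computation with the Weierstrass parametrization.
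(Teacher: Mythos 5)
Your proof is correct and relies on the same core ingredients as the paper — the Weierstrass parametrization, the Jacobian identity $|X'(z)|^2=4|P(X(z))|$, the relation $[2]^*L=4L$ on $\CC/\Lambda$, and the uniqueness of $\mu_{\psi_E}$ among atomless probability measures with $\psi_E^*\mu=4\mu$ — but it is organized in the opposite direction. You first identify $\mu_{\psi_E}$ abstractly with $x_*\omega_E$, which requires the base-change identity $\psi_E^*(x_*\omega_E)=x_*([2]^*\omega_E)$, and then compute the pushforward. The paper instead takes the candidate density $C_P^{-1}|P(x)|^{-1}$ as $\mu$, computes $X^*\mu=4C_P^{-1}L$ directly, and then compares $X^*(4\mu)$ with $X^*\psi_E^*\mu=[2]^*X^*\mu$ on the cover $\CC/\Lambda$, concluding $\psi_E^*\mu=4\mu$ from the injectivity of $X^*$ on measures. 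This sidesteps the base-change lemma entirely: the only functoriality invoked is the contravariant $X^*\psi_E^*=[2]^*X^*$, which is immediate from $\psi_E\circ X=X\circ[2]$, together with $[2]^*L=4L$. The technical obstacle you flag is genuine: the identity $\psi_E^*\circ x_*=x_*\circ[2]^*$ is not automatic for a commutative square of finite maps and needs the observation that, for generic $e\in E(\CC)$, the map $[2]^{-1}(e)\to\psi_E^{-1}(x(e))$ induced by $x$ is a bijection of $4$-element sets (using $[2]^{-1}(e)\cap[2]^{-1}(-e)=\emptyset$ when $e\neq -e$). Your identification $\mu_{\psi_E}=x_*\omega_E$ is a clean conceptual statement worth having, but the paper's pullback-to-the-cover reorganization buys a slightly tighter argument by never leaving the realm of pullbacks, where functoriality is free.
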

\begin{proof}
Let $\mu(x)$ denote the measure on the right-hand-side of $(\ref{LattesCanMeasure})$.  To prove $(\ref{LattesCanMeasure})$ it suffices to show that $\psi_E^*\mu=4\mu$, because this condition, along with the normalization $\int1d\mu(x)=1$ and the property of having no point-masses, characterizes the measure $\mu_{\psi_E}$; see \cite{Ljubich}.

Let $\Lambda$ be the (unique) lattice in $\CC$ such that $g_2(\Lambda)=-4P'(\frac{-A}{3})$ and $g_3(\Lambda)=-4P(\frac{-A}{3})$, where $g_2(\Lambda)$ and $g_3(\Lambda)$ are the usual modular invariants associated to $\Lambda$; see \cite{SilvermanI} Thm.~VI.5.1.  Define functions $X:\CC/\Lambda\to\CC$ and $Y:\CC/\Lambda\to\CC$ by $X(z)=\wp(z)-\frac{A}{3}$ and $Y(z)=\frac{1}{2}\wp'(z)$, where $\wp(z)$ denotes the elliptic Weierstrass function on $\CC/\Lambda$.  Then the map $\CC/\Lambda\to E(\CC)$ given by $z\mapsto(X(z),Y(z))$ is a complex-analytic isomorphism, and $X\circ[2]=\psi_E\circ X$, where $[2]:\CC/\Lambda\to\CC/\Lambda$ is the doubling map $[2]z=2z$.  Since $\deg(X)=2$ and the Jacobian of $z\mapsto X(z)$ is $|X'(z)|^2=|\wp'(z)|^2=4|Y(z)|^2$, we have
\begin{equation*}
\int_{\CC/\Lambda} f(X(z))4|Y(z)|^2dL(z)=2\int_{\PP^1(\CC)} f(x)d\ell(x)
\end{equation*}
whenever $f:\PP^1(\CC)\to\RR$ is $\ell$-integrable; here $L(z)$ denotes Lebesgue measure on the quotient $\CC/\Lambda$.  Taking $f(x)=g(x)/4C_P|P(x)|$ and using the fact that $4|P(x)|=4|y|^2=4|Y(z)|^2$, this becomes
\begin{equation*}
4C_P^{-1}\int_{\CC/\Lambda} g(X(z))dL(z)=2\int_{\PP^1(\CC)} g(x)d\mu(x),
\end{equation*}
which means that $X^*\mu=4C_P^{-1}L$.  Since $L$ is Lebesgue measure on $\CC/\Lambda$ and $\deg([2])=4$, we have $[2]^*L=4L$.  Using this and the fact that $X\circ[2]=\psi_E\circ X$, we have
\begin{equation*}
X^*(4\mu)=16C_P^{-1}L=[2]^*(4C_P^{-1}L)=[2]^*X^*\mu=X^*\psi_E^*\mu,
\end{equation*}
which implies that $4\mu=\psi_E^*\mu$, as desired.
\end{proof}

\begin{proof}[Proof of Proposition~\ref{ExamplePropLattes}]
We are going to show that
\begin{equation}\label{LattesNonArch}
\int\log^+|x|_vd\mu_{\psi_E,v}(x)=0
\end{equation}
at the non-archimedean places $v$ of $\QQ$, and that
\begin{equation}\label{LattesArch}
\log\sqrt{ab}\leq \int\log^+|x|d\mu_{\psi_E}(x)\leq c_1+\log\sqrt{ab}
\end{equation}
at the archimedean place, for some absolute constant $c_1>0$ (since $\QQ$ has just one archimedean place, corresponding to the usual absolute value $|\cdot|$ on $\CC$, we indicate that we are working over this place by omitting the subscript $v$ in the notation).  Combining $(\ref{LattesNonArch})$ and $(\ref{LattesArch})$ via $(\ref{ExampleIdentity})$, and noting that $h_{\psi_E}(\infty)=0$ since $\psi_E$ fixes $\infty$, we deduce the desired inequalities $(\ref{LattesPropIneq})$.

First suppose that $v$ is non-archimedean.  Let $B_v(0,1)=\{x\in\CC_v\mid|x|_v\leq1\}$ denote the closed unit disc in $\CC_v$, and let $\Bsf_v(0,1)=\{x\in\Psf_v^1\mid|x|_v\leq1\}$ denote its closure in $\Psf_v^1$ (the extension to $\Psf_v^1$ of the function $x\mapsto|x|_v$ is described at the end of $\S$~\ref{BerkovichSect}).  Given a point $x\in\CC_v$ such that $|x|_v>1$, the ultrametric inequality and the explicit formula $(\ref{LattesMap})$ imply that $|\psi_E(x)|_v=|x|_v/|4|_v$, since $|a|_v\leq1$ and $|b|_v\leq1$.  Since $\PP^1(\CC_v)\setminus B_v(0,1)$ is dense in $\Psf_v^1\setminus \Bsf_v(0,1)$, the formula $|\psi_E(x)|_v=|x|_v/|4|_v$ holds for all $x\in\Psf_v^1\setminus \Bsf_v(0,1)$ by continuity.  Since $|4|_v\leq1$, we deduce that $|\psi_E(x)|_v>1$ for all $x\in\Psf_v^1$ with $|x|_v>1$; in other words $\psi_E(\Psf_v^1\setminus \Bsf_v(0,1))\subseteq\Psf_v^1\setminus \Bsf_v(0,1)$, which in turn implies that
\begin{equation}\label{UnitDiscPullback}
\psi_E^{-1}(\Bsf_v(0,1))\subseteq\Bsf_v(0,1).
\end{equation}

Recall from $\S$~\ref{CanonicalMeasureSect} that the canonical measure $\mu_{\psi_E,v}$ is defined as a weak limit of measures supported the sets $\psi_E^{-k}(\zeta_{0,1})$ of pullbacks of the point $\zeta_{0,1}\in\Psf_v^1$.  Since $\zeta_{0,1}\in\Bsf_v(0,1)$, it follows from $(\ref{UnitDiscPullback})$ that all of these measures, and thus their limit $\mu_{\psi_E,v}$, are supported on $\Bsf_v(0,1)$.  Since the integrand $\log^+|x|_v$ vanishes on on $\Bsf_v(0,1)$, it vanishes on the support of $\mu_{\psi_E,v}$, whereby the integral in $(\ref{LattesNonArch})$ vanishes as desired.

It now remains only to prove the inequalities $(\ref{LattesArch})$ at the archimedean place.  Given $\alpha\geq0$ and $\beta\geq0$, define
\begin{equation*}
F(\alpha,\beta) =\int_{0}^{2\pi}\frac{1}{|\alpha e^{i\theta}-1||\beta e^{i\theta}+1|}d\theta.
\end{equation*}
Elementary arguments show that $F(\alpha,\beta)$ is nonnegative, finite when $\alpha\neq1$ and $\beta\neq1$, and that it satisfies the symmetry property $F(\alpha,\beta)=F(\beta,\alpha)$ and the functional equation $F(1/\alpha,1/\beta)=\alpha\beta F(\alpha,\beta)$.

Using polar coordinates $x=re^{i\theta}$ we have
\begin{equation}\label{CPCalc}
C_P = \int\frac{1}{|P(x)|}d\ell(x) = \frac{1}{ab}\int_{0}^{+\infty}F(r/a,r/b)\,dr
\end{equation}
and
\begin{equation}\label{LattesIntCalc}
\int\frac{\log^+|x|}{|P(x)|}d\ell(x) = \frac{1}{ab}\int_{1}^{+\infty}F(r/a,r/b)\log r \,dr.
\end{equation}
Note also that
\begin{equation}\label{IntegralVanishes}
\frac{1}{ab}\int_{0}^{+\infty}F(r/a,r/b)\log(r/\sqrt{ab}) \,dr=0.
\end{equation}
To see this, denote by $I$ the left-hand-side of $(\ref{IntegralVanishes})$.  Using the equation $F(1/\alpha,1/\beta)=\alpha\beta F(\alpha,\beta)$ followed by the change of coordinates $r\mapsto ab/r$ it follows that $I=-I$, whereby $I=0$ as desired.  Combining $(\ref{CPCalc})$, $(\ref{LattesIntCalc})$, and $(\ref{IntegralVanishes})$, and dividing through by $C_P$, we deduce that
\begin{equation}\label{ThetaIdentity}
\int \log^+|x| d\mu_{\psi_E}(x)=\frac{1}{C_P}\int\frac{\log^+|x|}{|P(x)|}d\ell(x) =\Theta_{a,b}+\log\sqrt{ab},
\end{equation}
where
\begin{equation}\label{Theta}
\Theta_{a,b}=\frac{1}{abC_P}\int_{0}^{1}F(r/a,r/b)\log(1/r) \,dr.
\end{equation}
Obviously $\Theta_{a,b}\geq0$ since the integrand in $(\ref{Theta})$ is nonnegative, which along with $(\ref{ThetaIdentity})$ implies the lower bound in $(\ref{LattesArch})$.  In order to prove the upper bound in $(\ref{LattesArch})$, it suffices to show that
\begin{equation}\label{ThetaLimsup}
\limsup_{\max\{a,b\}\to+\infty}\Theta_{a,b}<+\infty.
\end{equation}
We will omit the proof of $(\ref{ThetaLimsup})$, which is lengthy but straightforward; it uses only trivial upper and lower bounds on the function $F(\alpha,\beta)$ along with some elementary calculus.
\end{proof}

\subsection{The height difference bound revisited}  Let $\sigma_\alpha(x)=\alpha-(\alpha-x)^2$, $\psi_c(x)=x^2+c$, and $\psi_E(x)=(x^2+ab)^2/4x(x-a)(x+b)$ be the examples considered in $\S$~\ref{COCSect}, $\S$~\ref{QuadPolySect}, and $\S$~\ref{LattesSect}, respectively.  Combining Theorem~\ref{HeightDiffProp} with Propositions~\ref{ExamplePropCOC}, \ref{ExamplePropQuad}, and \ref{ExamplePropLattes}, we have
\begin{equation}\label{ExampleInequalities}
\begin{split}
h_{\sigma_\alpha}(x)-h_\st(x) & \leq h_\st(\alpha) + c_2 \\
h_{\psi_c}(x)-h_\st(x) & \leq (1/2)h_\st(c) + c_3 \\
h_{\psi_E}(x)-h_\st(x) & \leq \log\sqrt{ab} + c_4,
\end{split}
\end{equation}
for all $x\in\PP^1(\Kbar)$, where $c_2=\frac{3\sqrt{3}}{4\pi}L(2,\chi)+\log2$, $c_3=\log 4$, and $c_4=c_1+\log2$, where $c_1$ is the absolute constant appearing in Proposition~\ref{ExamplePropLattes}.


\medskip

\medskip

\end{document}